\theoremstyle{plain}
\newtheorem{theorem}{Theorem}[section]
\newtheorem{prop}[theorem]{Proposition}
\newtheorem{lemma}[theorem]{Lemma}
\newtheorem{coro}[theorem]{Corollary}
\newtheorem{fact}[theorem]{Fact}
\theoremstyle{definition}
\newtheorem{remark}[theorem]{Remark}
\newtheorem{definition}[theorem]{Definition}
\numberwithin{equation}{section}
\newcommand{\ii}{\ts\mathrm{i}}
\newcommand{\ts}{\hspace{0.5pt}}
\newcommand{\nts}{\hspace{-0.5pt}}
\DeclareMathOperator{\dens}{\mathrm{dens}}
\DeclareMathOperator{\diam}{\mathrm{diam}}
\DeclareMathOperator{\card}{\mathrm{card}}
\DeclareMathOperator{\cent}{\mathrm{cent}}
\DeclareMathOperator{\Gal}{\mathrm{Gal}}
\DeclareMathOperator{\stab}{\mathrm{stab}}
\DeclareMathOperator{\norm}{\mathrm{norm}}
\DeclareMathOperator{\vol}{\mathrm{vol}}
\DeclareMathOperator{\GL}{\mathrm{GL}}
\DeclareMathOperator{\No}{\mathrm{N}}
\DeclareMathOperator{\id}{\mathrm{id}}
\newcommand{\vG}{\varGamma}
\newcommand{\fb}{\mathfrak{b}}
\newcommand{\fp}{\mathfrak{p}}
\newcommand{\fq}{\mathfrak{q}}
\newcommand{\fP}{\mathfrak{P}}
\newcommand{\fL}{\mathfrak{L}}
\newcommand{\cB}{\mathcal{B}}
\newcommand{\cG}{\mathcal{G}}
\newcommand{\cH}{\mathcal{H}}
\newcommand{\cL}{\mathcal{L}}
\newcommand{\cR}{\mathcal{R}}
\newcommand{\cS}{\mathcal{S}}
\newcommand{\cO}{\mathcal{O}}
\newcommand{\AAA}{\mathbb{A}}
\newcommand{\ZZ}{\mathbb{Z}\ts}
\newcommand{\RR}{\mathbb{R}\ts}
\newcommand{\CC}{\mathbb{C}\ts}
\newcommand{\NN}{\mathbb{N}}
\newcommand{\QQ}{\mathbb{Q}}
\newcommand{\XX}{\mathbb{X}}
\newcommand{\YY}{\mathbb{Y}}
\newcommand{\one}{\mathbbm{1}}
\newcommand{\Aut}{\mathrm{Aut}}
\newcommand{\defeq}{\mathrel{\mathop:}=}
\newcommand{\eqdef}{=\mathrel{\mathop:}}
\newcommand{\exend}{\hfill $\Diamond$}
\newcommand{\myfrac}[2]{\frac{\raisebox{-2pt}{$#1$}}
      {\raisebox{0.5pt}{$#2$}}}
\newcommand{\bs}[1]{\boldsymbol{#1}}
\begin{document}

\title[Power-free points in quadratic number fields]{Power-free
  points in quadratic number fields:\\[2mm]
  Stabiliser, dynamics and entropy}

\author{Michael Baake}

\address{Fakult\"{a}t f\"{u}r Mathematik,
  Universit\"{a}t Bielefeld,\newline \hspace*{\parindent}Postfach
  100131, 33501 Bielefeld, Germany}
\email{mbaake@math.uni-bielefeld.de}

\author{\'{A}lvaro Bustos}

\address{School of Mathematics and Statistics, The Open
  University,\newline \indent Walton Hall, Milton Keynes MK7 6AA, UK}
\email{abustos@math.uni-bielefeld.de}

\author{Andreas Nickel}


\address{Institut f\"{u}r Theoretische Informatik, Mathematik
  und Operations Research, \newline
  \indent Universit\"{a}t der Bundeswehr M\"{u}nchen, \newline
  \indent Werner-Heisenberg-Weg 39, 85579 Neubiberg, Germany}
\email{andreas.nickel@unibw.de}

\makeatletter
\@namedef{subjclassname@2020}{%
  \textup{2020} Mathematics Subject Classification}
\makeatother

\keywords{Number-theoretic shift spaces, Extended symmetries,
  Topological entropy}

\subjclass[2020]{37B10, 11R11, 11R04}

\begin{abstract}
  The sets of $k$-free integers in general quadratic number fields are
  studied, with special emphasis on (extended) symmetries and their
  impact on the topological dynamical systems induced by such
  integers. We establish correspondences between number-theoretic and
  dynamical quantities, and use symmetries and entropy to distinguish
  the systems.
\end{abstract}

\maketitle
\thispagestyle{empty}

\section{Introduction}\label{sec:intro}

Topological and measure-theoretic dynamical systems have been studied
for a long time, both in one and in higher dimensions. Powerful
connections between such systems and number theory are known since the
pioneering work of Furstenberg; see \cite{Furst} for a concise
introduction, and \cite{Klaus} for an account of some of the
complications that show up in the innocently looking step from one to
more than one dimension.

Here, we revisit one particular aspect of this connection, namely the
structure of certain two-dimensional shift spaces of number-theoretic
origin.  More precisely, motivated by the properties of square-free
integers and visible lattice points \cite{Apo,BMP,Sarnak}, we are
interested in the planar shifts of $k$-free integers in arbitrary
quadratic number fields, thus putting some of the observations from
\cite{BBHLN} into a more general setting.  Here, given an arbitrary
quadratic field $K$ with ring of integers $\cO^{}_{\! K}$, an element
$x\in\cO^{}_{\! K}$ is called $k$-free for some fixed natural number
$k\geqslant 2$ when the principal ideal generated by $x$ is not
divisible by the $k$-th power of any prime ideal in $\cO^{}_{\! K}$.

The set of $k$-free integers gives rise to a natural topological
dynamical system via its Minkowski embedding into $\RR^2$ and the
topological closure of the $\cO^{}_{\! K}$-orbit of the resulting
discrete point set in the standard local topology. These shifts have
interesting properties that are known from the set of visible lattice
points \cite{BMP} and various generalisations to $\cB$-free lattice
systems \cite{BH,CV,BBHLN}, where the latter are also generalisations
of the recently much-studied $\cB$-free integers \cite{Abda,DKKL}. In
fact, via the Minkowski embedding, previously studied extensions to
number fields \cite{CV,BBHLN} can also be viewed as $\cB$-free lattice
systems.

It is an interesting general observation that such systems can also be
described in the setting of weak model sets \cite{TAO,BHS,Keller,KKL},
which builds on the pioneering work of Meyer \cite{Meyer} and gives
rapid access to various spectral and dynamical properties of such
systems \cite{BHS,HR}. Among these results is the statement that the
dynamical spectrum is pure point, though no eigenfunction except the
trivial one is continuous, and also a general formula for the spectrum
and for the topological entropy of such shift spaces.

Clearly, one natural goal is the investigation of these shifts up to
topological conjugacy, where some fairly simple groups come in handy,
namely the topological centraliser and normaliser of the translation
group in the group of homeomorphisms; see \cite{BRY,Baa,Bustos} and
references therein. Generalising a result of Mentzen \cite{Mentzen}
for the square-free integers on the line, it was shown in previous
work \cite{BBHLN} that the centraliser is trivial for many of these
$k$-free shifts, while the normaliser is not. Therefore, certain
results can already be obtained from this relatively simple invariant,
which has the advantage of being explicitly computable.

In all our cases, we deal with examples of single orbit dynamics
\cite{Weiss}, which implies that we can derive many properties from
the defining point set $V$ of $k$-free integers (in its Minkowski
embedding). Our strategy thus is to first study the stabiliser of the
set $V$ and later derive the extended symmetries of the induced
dynamical system. This provides an interesting connection between an
algebraic and a dynamical property, here via the connection between
the normaliser, the unit group $\cO^{\times}$, and the Galois group of
$K/\QQ$.  Later, when we consider the induced shift spaces more
closely, another connection of this kind shows up, then between
topological entropy and the values of Dedekind zeta functions at
integer values.  \smallskip

The paper is organised as follows. In Section~\ref{sec:initial}, we
set the scene with some initial examples of quadratic fields, where we
determine the stabiliser for the $k$-free integers in
$\QQ (\sqrt{-2\ts}\,)$ and recall previous results from \cite{BBHLN}.
Then, Section~\ref{sec:quadratic} covers the case of all quadratic
fields, where the special cases treated before will come in handy as
they turn out to be the ones that indeed need special treatment.

Afterwards, in Section~\ref{sec:shift}, we construct the shift spaces
that emerge as the orbit closure of the $k$-free points under the
lattice translation action in the Minkowski embedding. Here, we use a
special variant of the embedding such that all systems are acted on by
the same group, namely the integer lattice $\ZZ^2$.  This allows to
determine the centraliser and the normaliser of the $k$-free shifts
for arbitrary quadratic fields in a unified way.

Next, we address the question of how to classify the shifts up to
topological conjugacy. While some distinctions are possible on the
basis of the normaliser, we need topological entropy for a finer
distinction, as we discuss in Section~\ref{sec:entropy}. Here, the
entropy is expressed in terms of special values of the Dedekind zeta
function, thus providing another link between an algebraic and a
dynamical quantity. We also refine our viewpoint by considering factor
maps, which define semi-conjugacies, some of which can then be
excluded as well.

\section{Initial examples of quadratic fields}\label{sec:initial}

Let us begin with the example of the imaginary quadratic field
$K=\QQ (\sqrt{-2\ts}\,)$, which has ring of integers
\[
  \cO \, = \, \cO^{}_{\! K} \, = \, \ZZ[ \sqrt{-2\ts} \, ] \ts .
\]
Its unit group is the smallest one possible, which is to say that
$\cO^{\times} \nts\nts = \{ \pm 1 \}\simeq C_2$. Here and below, $C_n$
denotes the cyclic group of order $n$.  The Galois group is another
$C_2$, with complex conjugation (denoted by $\bar{.}\,$) as the
non-trivial automorphism, and the field norm is $N(x) = x \bar{x}$.
Since $\cO$ has class number $1$, we can use numbers (rather than
ideals) for this initial example. For an integer $k\geqslant 2$, we
say that $x\in\cO$ is \emph{$k$-free} if it is not divisible by the
$k$-th power of any prime in $\cO$.

\begin{prop}\label{prop:min-two}
  Let\/ $2 \leqslant k \in \NN$ be fixed and consider the set\/
  $V\nts\nts =V^{}_{\nts k}$ of\/ $k$-free integers in\/
  $\cO = \ZZ[\sqrt{-2\ts}\,]$.  Let\/ $A$ be a\/ $\ZZ$-linear
  bijection of\/ $\cO$ with\/ $A(V) \subseteq V\!$. Then, $A$ is of
  the form\/ $A(x) = \varepsilon \ts\ts \sigma (x)$ with\/
  $\varepsilon \in \cO^{\times}\simeq C_2$ and\/
  $\sigma \in \{ \id, \bar{.} \, \}\simeq C_2$.  Consequently,
  $A(V)=V\!$, and these mappings form the group\/
  $\ts\stab (V) \simeq C_2 \times C_2$.
\end{prop}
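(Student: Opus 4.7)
The plan is to constrain the images $A(1)$ and $A(\sqrt{-2\ts})$ of a $\ZZ$-basis of $\cO$ via $\ZZ$-linearity and the hypothesis $A(V) \subseteq V$, and then to rule out any remaining exotic candidates by exhibiting an explicit witness in $V$.

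First I would use that $A$ is $\ZZ$-linear, so $A(n) = n\ts A(1)$ and $A(n\sqrt{-2\ts}) = n\ts A(\sqrt{-2\ts})$ for every $n \in \ZZ$. Imposing $A(V) \subseteq V$ for all $k$-free rational integers $n$ in $\cO$ translates into divisibility constraints at every prime ideal of $\cO$: taking $n = p^{k-1}$ for odd rational primes $p$ forces $v^{}_{\fp}(A(1)) = v^{}_{\fp}(A(\sqrt{-2\ts})) = 0$ at every prime $\fp$ above $p$, and taking $n = 2^{j}$ for maximal admissible $j$ (which depends on the parity of $k$ via the ramification of $2$) bounds the valuations at the ramified prime $\fp_{0} = (\sqrt{-2\ts})$. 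The upshot is that $A(1)$ and $A(\sqrt{-2\ts})$ each lie in a short list of elements of the form $\pm\sqrt{-2\ts}^{\,j}$ with small $j$.

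Next, the bijectivity of $A$ -- equivalently, that $\{A(1), A(\sqrt{-2\ts})\}$ is a $\ZZ$-basis of $\cO$ -- prunes the list further. For $k$ odd, the first step already forces $v^{}_{\fp_0}(A(1)) = 0$, leaving only $A(1) \in \{\pm 1\}$ and $A(\sqrt{-2\ts}) \in \{\pm\sqrt{-2\ts}\}$; this gives exactly the four maps $x \mapsto \varepsilon\,\sigma(x)$ of the proposition. For $k$ even, however, both $A(1)$ and $A(\sqrt{-2\ts})$ lie in $\{\pm 1, \pm\sqrt{-2\ts}\}$, and besides the four ``diagonal'' candidates there survive four ``coordinate-swap'' candidates with $A(1) \in \{\pm\sqrt{-2\ts}\}$ and $A(\sqrt{-2\ts}) \in \{\pm 1\}$, which must be excluded separately.

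The hard part will be ruling out the coordinate-swap candidates uniformly. The strategy is to exhibit, for each even $k$, a single $x \in V$ whose image under any such $A$ lies outside $V$. I would take the split prime generator $\pi = 1 + \sqrt{-2\ts}$ (of norm $3$), expand $\pi^{k} = c + d\ts\sqrt{-2\ts}$ with $c, d \in \ZZ$, and set $x \defeq d + c\ts\sqrt{-2\ts}$. A direct computation of the ideal factorisation of $(x)$ from its norm $d^{2} + 2 c^{2}$ verifies that $x$ is $k$-free, whereas the four coordinate-swap maps send $x$ to one of $\pm\pi^{k}$ or $\pm\bar{\pi}^{k}$, each of which lies in $\fq^{k}$ or $\bar{\fq}^{k}$ with $\fq = (\pi)$ and hence fails to be $k$-free. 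The base case $k = 2$ is illustrative: $\pi^{2} = -1 + 2\sqrt{-2\ts}$ gives $x = 2 - \sqrt{-2\ts} = -\sqrt{-2\ts}\,(1 + \sqrt{-2\ts}) \in V$, while the corresponding swap images $\pm\pi^{2}$ and $\pm\bar{\pi}^{2}$ are all in $\fq^{2} \cup \bar{\fq}^{2}$. Together with the direct check that the four diagonal maps do preserve $V$ (since $\cO^{\times}$-multiplication and Galois conjugation both respect $k$-freeness of ideals), this yields $A(V) = V$ and $\stab(V) \simeq C_{2} \times C_{2}$ as claimed.
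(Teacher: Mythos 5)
Your proposal is correct, and its architecture matches the paper's: a coprimality structure obtained by multiplying elements of $V$ by powers of unramified rational primes, the determinant condition from bijectivity of $A$ on $\cO$, and finally an explicit witness in $V$ that kills the remaining ``swap'' candidates. Two differences are worth recording. First, you pin $A(1)$ and $A(\sqrt{-2\ts})$ down to $\{\pm 1, \pm\sqrt{-2\ts}\}$ by feeding powers of $2$ into $V$, whereas the paper only uses $A(1)\in V$ to get $A(1)=\pm\xi^{\ts m}$ with $0\leqslant m<k$ and then forces $m\in\{0,1\}$ via the determinant formula $\det(A)=(-2)^r\ts b$ (resp.\ $(-2)^r a$); both routes work, and yours handles odd $k$ without any witness at all. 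Second, and more substantially, your witness is $k$-dependent, and its key property --- that $x=d+c\sqrt{-2\ts}$, where $\pi^k=c+d\sqrt{-2\ts}$, is $k$-free --- is asserted rather than proved. It is true, but ``direct computation from the norm'' hides a genuine argument: since $\gcd(c,d)=1$ (no rational prime can divide $\pi^k$), the element $x$ is primitive, so for each rational prime $p$ at most one prime ideal over $p$ divides $(x)$ and its valuation equals $v_p\bigl(N(x)\bigr)$; one then checks $v_2\bigl(N(x)\bigr)\leqslant 1$, $v_3\bigl(N(x)\bigr)=1$, and $N(x)=d^2+2c^2<2\ts(c^2+2d^2)=2\cdot 3^k$, which excludes $p^k\mid N(x)$ for every $p\geqslant 5$ because $(5/3)^k\geqslant 25/9>2$. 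Without some such size-and-valuation estimate your sentence is a claim, not a verification. The paper sidesteps all of this with the fixed witness $3+\xi$ of norm $11$: under a swap map its image has norm $19$, violating the coprimality structure at $19$ in one line, uniformly in $k$. Once you supply the $k$-freeness argument above, your proof is complete.
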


\begin{proof}
  Let $A$ be a $\ZZ$-linear bijection of $\cO$ with
  $A(V) \subseteq V\!$. If $x\in V$ is coprime with a rational prime
  $p$ that is unramified, so $\gcd^{}_{\cO} (x,p) = 1$ where the
  $\gcd^{}_{\cO}$ in $\cO$ is unique up to units, we know that
  $p^{\ell} x \in V$ for every $1 \leqslant \ell < k$, hence also
  $A (p^{k-1} x) = p^{k-1} A(x) \in V \!$, which implies
  $\gcd^{}_{\cO} ( A(x), p) = 1$. Since no odd rational prime is
  ramified in $K$, this observation provides a powerful coprimality
  structure.
  
  Let $U \nts =\cO^{\times} =\{ \pm 1\}$ and set $\xi=\sqrt{-2\ts}$.
  For $p=2=-\xi^2$, which is the only ramified prime in this case, we
  may now conclude from the above coprimality structure that
\[
   A (U) \, \subseteq \, U \cup \xi\ts\ts U 
    \cup \dots \cup \xi^{k-1} U ,
\]
which we will now reduce to $A(U) \subseteq U\!$, and thus to $A(U)=U$
since $U$ is finite.  Without loss of generality, we may assume
$A(1) = \xi^m$ for some $0\leqslant m < k$, possibly after replacing
$A$ by $-A$, which is a map of the same kind.  Also, we know that
$A(\xi) = a + b \ts \xi$ for some $a,b \in \ZZ$.

If we compute $\det(A)$ with respect to the basis $\{ 1, \xi \}$, we
get $\det (A) = (-2)^r b$ when $m=2r$ and $\det(A) = - (-2)^r a$
when $m=2r+1$.  Since $A$ is bijective on $\cO$, it is unimodular as
an integer matrix, so $\det (A) = \pm 1$. This forces $r=0$, and we
either get $m=0$, together with $b=\pm 1$, or $m=1$, then with
$a=\pm 1$. If $m=1$, we thus have $A(\xi) = \pm 1 + b\ts \xi$, which
has norm $N(\pm 1 + b \ts \xi) = 1 + 2 b^2$.  This must be a
  power of $2$ by coprimality, which is only possible for $b=0$. But
$A(1)=\xi$ and $A(\xi)=\pm 1$ implies $A(3+\xi) = \pm 1 +3 \xi$, thus
mapping an element of $V$ of norm $11$ to an image of norm $19$, which
is impossible by the coprimality structure. This rules out $m=1$.

Finally, if $m=0$ and $b=\pm 1$, we get $A(\xi) = a \pm \xi$ with
$a\in\ZZ$ and $N(A(\xi))= a^2+2$, which is a power of $2$ only for
$a=0$. This leads to $A(1) = 1$ together with $A(\xi)=\pm \xi$, or to
$-A$, which are the four elements of the form
$A(x) = \varepsilon \ts\ts \sigma (x)$ stated in the proposition. They
clearly map units to units, and all remaining claims are clear.
\end{proof}

Here and below, given a set $U\!$, the notation $\stab (U)$ refers to
the monoid of $\ZZ$-linear mappings that send $U$ \emph{into} itself.
It is thus part of the above result that the stabiliser of
$V^{}_{\nts k} \subset \ZZ[\sqrt{-2\ts}\,]$ is actually a group.  In
\cite{BBHLN}, the corresponding result was proved for the imaginary
quadratic fields $\QQ (\sqrt{d \ts}\, )$ with $d\in \{ -1 , -3 \}$,
which are statements about the $k$-free elements of the Gaussian and
the Eisenstein ring of integers, $\ZZ[\ii]$ and $\ZZ[\rho]$ with
$\rho = \frac{1}{2} ( -1 + \ii \sqrt{3}\,)$.  Let us first recall the
Gaussian case from \cite[Lemma~6.1]{BBHLN}. Here and below,
$D_n = C_n \nts\rtimes C_2$ denotes the dihedral group of order $2n$.

\begin{fact}\label{fact:Gauss}
  Let\/ $V^{}_{\nts k}$ be the set of $k$-free Gaussian integers, for
  some fixed\/ $2\leqslant k \in \NN$. Then, any\/ $\ZZ$-linear
  bijection\/ $A$ of\/ $\ZZ [\ii]$ that satisfies\/
  $A ( V^{}_{\nts k}) \subseteq V^{}_{\nts k}$ is of the form\/
  $A(x) = \varepsilon \ts\ts \sigma(x)$ with\/
  $\varepsilon \in \ZZ[\ii]^{\times} \nts = \{ 1, \ii, -1, -\ii \}
  \simeq C_4$ and\/ $\sigma \in \{ \id, \bar{.}\, \} \simeq C_2$.
   
  These mappings are bijections of\/ $V^{}_{\nts k}$ and form the
  group\/ $\ts \stab (V^{}_{\nts k}) \simeq C_4 \rtimes C_2 = D_4$,
  which is a maximal finite subgroup of\/ $\GL (2,\ZZ)$, and
  independent of\/ $k$.  \qed
\end{fact}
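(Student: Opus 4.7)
\emph{Proof plan.} The plan is to mirror the argument of Proposition~\ref{prop:min-two}, now in the ring $\cO = \ZZ[\ii]$. First, I would set up the coprimality structure: since $2 = -\ii \ts (1+\ii)^2$ is the only ramified rational prime in $\cO$, every odd rational prime $p$ is unramified, and the same reasoning as in Proposition~\ref{prop:min-two} yields that $x \in V^{}_{\nts k}$ coprime to $p$ in $\cO$ implies $A(x)$ coprime to $p$. Setting $\pi = 1+\ii$ and $U = \cO^{\times} = \{ 1, \ii, -1, -\ii \}$, this immediately forces
\[
   A(U) \, \subseteq \, U \cup \pi \ts U \cup \dots \cup \pi^{k-1} U .
\]

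Next, I would combine unimodularity with the coprimality constraint applied to $A(\ii)$. After replacing $A$ by $\varepsilon^{-1} \circ A$ for a suitable $\varepsilon \in U$, I may assume $A(1) = \pi^m$ for some $0 \leqslant m < k$, and I write $A(\ii) = a + b \ts \ii$ with $a,b \in \ZZ$. In the basis $\{ 1, \ii \}$, one has $\det A = \mathrm{Re}(\pi^m) \ts b - \mathrm{Im}(\pi^m) \ts a$, and since $\pi^2 = 2 \ts \ii$, both coordinates of $\pi^m$ are even for $m \geqslant 2$, which forces $\det A$ to be even and thereby contradicts $A \in \GL(2, \ZZ)$. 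Hence $m \in \{ 0, 1 \}$. Applying the coprimality argument to the unit $\ii$ analogously yields $N(A(\ii)) = a^2 + b^2 = 2^j$ for some $0 \leqslant j < k$, which together with the determinant relation leaves only a handful of candidates in each case.

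The main obstacle is then to rule out all surviving candidates except $A = \id$ and $A = \bar{.}$, which are the two solutions with $m = 0$ and $A(\ii) = \pm \ii$. For each spurious map I would exhibit an element $x \in V^{}_{\nts k}$ and a split odd rational prime $p$ such that $x$ is coprime to $p$ while $N(A(x))$ is divisible by $p$, directly contradicting the coprimality preservation. For instance, $A(1) = 1, \, A(\ii) = 1+\ii$ is ruled out because then $A(1+\ii) = 2 + \ii$ has norm $5$ while $1+\ii$ is coprime to $5$; likewise, $A(1) = 1+\ii, \, A(\ii) = \ii$ gives $A(1+\ii) = 1 + 2\ts\ii$, again of norm $5$. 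A short case-by-case check of this flavour disposes of all unwanted candidates. Undoing the initial unit reduction then reinserts the factor $\varepsilon$ and yields precisely the eight mappings $A(x) = \varepsilon \ts \sigma(x)$, which clearly preserve $V^{}_{\nts k}$ since both unit multiplication and complex conjugation permute the prime ideals of $\cO$. Consequently $\stab(V^{}_{\nts k})$ is the group $C_4 \rtimes C_2 \simeq D_4$, and its maximality as a finite subgroup of $\GL(2, \ZZ)$ follows from the standard classification of finite subgroups of $\GL(2, \ZZ)$.
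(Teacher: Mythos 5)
Your plan is sound. Note that the paper itself gives no proof of this Fact — it is quoted from \cite[Lemma~6.1]{BBHLN} — but your argument is a faithful adaptation of the in-paper proof of Proposition~\ref{prop:min-two} from $\ZZ[\sqrt{-2\ts}\,]$ to $\ZZ[\ii]$: the same coprimality structure at unramified primes, the same parity/determinant argument to force $m\in\{0,1\}$ (here via $\pi^2=2\ts\ii$ rather than $\xi^2=-2$), and the same use of an explicit witness whose image acquires a forbidden split prime in its norm. The only point to tighten is the final case check: because $a^2+b^2=2^{\ts j}$ admits the solutions $a,b\in\{0,\pm1\}$ with $j=1$ (unlike $a^2+2$ in the $\sqrt{-2}$ case), you have eight spurious candidates rather than two, and the single witness $1+\ii$ does not dispose of all of them — e.g.\ $A(1)=1$, $A(\ii)=-1+\ii$ sends $1+\ii$ to the unit $\ii$, so you need a different element there, say $1+3\ts\ii=(1+\ii)(2+\ii)\mapsto -2+3\ts\ii$ of norm $13$. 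Since a suitable witness exists in every case, this is a matter of completeness rather than a gap in the method.
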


The analogous statement for the Eisenstein integers reads as follows;
see \cite[Thm.~6.5]{BBHLN}.

\begin{fact}\label{fact:Eisen}
  Let\/ $V^{}_{\nts k}$ be the set of $k$-free Eisenstein integers,
  for some fixed\/ $2\leqslant k \in \NN$. Then, any\/ $\ZZ$-linear
  bijection\/ $A$ of\/ $\ZZ [\rho]$ that satisfies\/
  $A ( V^{}_{\nts k} ) \subseteq V^{}_{\nts k}$ is of the form\/
  $A(x) = \varepsilon \ts\ts \sigma(x)$ with\/
  $\varepsilon \in \ZZ[\rho]^{\times} \nts = \{ (-\rho)^m: 0\leqslant
  m \leqslant 5 \} \simeq C_6$ and\/
  $\sigma \in \{ \id, \bar{.}\, \} \simeq C_2$.
   
  These mappings are bijections of\/ $V^{}_{\nts k}$ and form the
  group\/ $\ts \stab (V^{}_{\nts k}) \simeq C_6 \rtimes C_2 = D_6$,
  which is another maximal finite subgroup of\/ $\GL (2,\ZZ)$, again
  independent of\/ $k$.  \qed
\end{fact}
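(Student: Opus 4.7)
The plan is to mirror the proof of Proposition~\ref{prop:min-two}, adapted to the Eisenstein setting. The only rational prime ramifying in $\cO = \ZZ[\rho]$ is $3$, and the unique prime above it is $\pi = 1 - \rho$, satisfying $\pi^2 = -3\rho$ (equivalently, $3 = -\bar{\rho}\,\pi^2$). Writing $U = \cO^{\times} \simeq C_6$, the coprimality argument of Proposition~\ref{prop:min-two} applies verbatim to every unramified rational prime $p$: for $x \in V^{}_{\nts k}$ coprime to $p$, one has $p^{k-1} x \in V^{}_{\nts k}$, so $A(p^{k-1} x) = p^{k-1} A(x) \in V^{}_{\nts k}$ forces $\gcd^{}_{\cO}(A(x), p) = 1$. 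Since every unit is coprime to every rational prime, applying this to units gives
\[
   A(U) \,\subseteq\, U \cup \pi U \cup \dots \cup \pi^{k-1} U .
\]
After replacing $A$ by $\varepsilon^{-1} A$ for a suitable unit $\varepsilon \in U$, one may therefore assume $A(1) = \pi^m$ for some $0 \leqslant m < k$.

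Next, writing $\pi^m = s_m + t_m \rho$ and $A(\rho) = a + b \rho$ in the $\ZZ$-basis $\{1, \rho\}$, one has $\det(A) = s_m b - t_m a$. Since $\pi^2 = -3\rho$ lies in $3\cO$, both $s_m$ and $t_m$ are divisible by $3$ for every $m \geqslant 2$, so $\det(A) \equiv 0 \pmod 3$ in that range, contradicting $\det(A) = \pm 1$. This leaves only $m \in \{0, 1\}$.

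The case analysis for $m \in \{0, 1\}$ proceeds by combining the determinant constraint with the norm condition $N(A(\rho)) = a^2 - ab + b^2 = 3^s$, $0 \leqslant s < k$. For $m = 1$, the relation $a + b = \pm 1$ yields $N(A(\rho)) \equiv 1 \pmod 3$, hence $s = 0$. For $m = 0$, one has $b = \pm 1$, and reducing modulo $9$ shows that $a^2 - ab + b^2 \equiv 3 \pmod 9$ whenever it is divisible by $3$, so $s \leqslant 1$. In either case, only finitely many candidate pairs $(a, b)$ remain. Exactly two of them give the desired symmetries with $A(1) = 1$, namely $A(\rho) = \rho$ (identity) and $A(\rho) = -1 - \rho = \bar{\rho}$ (complex conjugation); every other candidate is to be eliminated by testing $A$ on a probe element of small unramified prime norm. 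A convenient choice is $5 + 2\rho$, a prime of norm $19$: a direct computation shows that for every spurious $A$, the image $A(5 + 2\rho)$ has an unramified prime factor different from $19$, contradicting the coprimality conclusion. I expect this enumeration to be the main obstacle: since $U \simeq C_6$ is larger than in Proposition~\ref{prop:min-two}, the candidate list is correspondingly longer, and one has to make sure a single probe uniformly kills every spurious case (the presence of the ramified prime $3$ rules out some naive probes, such as $3+\rho$).

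Combining the six unit multiplications with the two surviving maps for $A(1) = 1$ recovers exactly the $12$ maps $A(x) = \varepsilon\,\sigma(x)$ of the claim. These manifestly preserve $k$-freeness, and the composition rule $(\varepsilon_1 \sigma_1)(\varepsilon_2 \sigma_2)(x) = \varepsilon_1\,\sigma_1(\varepsilon_2)\,\sigma_1 \sigma_2(x)$ identifies their group with the semidirect product $C_6 \rtimes C_2 = D_6$, independently of $k$. Finally, $D_6$ is classically known to be a maximal finite subgroup of $\GL(2, \ZZ)$, realised here as the full symmetry group of the hexagonal lattice underlying the Minkowski embedding of $\ZZ[\rho]$.
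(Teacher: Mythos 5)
Your argument is correct, and it is the natural adaptation of the paper's own template: the paper does not prove Fact~\ref{fact:Eisen} here but imports it from \cite[Thm.~6.5]{BBHLN}, and your proof follows the same pattern as the proof of Proposition~\ref{prop:min-two} for $\ZZ[\sqrt{-2\ts}\,]$ (coprimality with unramified primes, reduction of $A(1)$ to a small power of the ramified uniformiser via the determinant, then elimination of spurious candidates by norm and probe arguments). All the steps you left as ``direct computation'' do check out. The determinant argument for $m\geqslant 2$ works because $\pi^2=-3\rho$ puts $\pi^m$ into $3\ts\cO$, forcing $3\mid\det(A)$. For $m=1$ the condition $a+b=\pm1$ gives $a^2-ab+b^2=(a+b)^2-3ab\equiv 1 \bmod 3$, so $A(\rho)$ is a unit with $a+b=\pm 1$, leaving $A(\rho)\in\{\pm1,\pm\rho\}$; for $m=0$ with $b=\pm1$ the reduction mod $9$ indeed gives $a^2\mp a+1\equiv 3 \bmod 9$ whenever $3$ divides it, so $N(A(\rho))\in\{1,3\}$ and the candidate list is $A(\rho)\in\{\rho,\ts\rho^2,-\rho,\ts 1+\rho,\ts 2+\rho,-1+\rho,-2-\rho,\ts 1-\rho\}$. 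The probe $5+2\rho$ (norm $19$, a split prime) does kill every spurious case in one stroke: the images have norms $7$, $39=3\cdot 13$, $67$ or $109$ (for $m=0$) and $49=7^2$ or $109$ (for $m=1$), none of which is of the admissible form $3^a\ts 19^b$ dictated by the coprimality structure. Your side remark is also accurate: the naive probe $3+\rho$ fails, e.g.\ against $A(\rho)=2+\rho$, whose image $5+\rho$ has norm $21=3\cdot 7$ and so survives that test. The only cosmetic caveat is that a referee would want the candidate enumeration and the six norm computations written out explicitly rather than asserted, but there is no mathematical gap.
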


Also, the stabiliser was determined for some real quadratic fields,
namely for $\QQ (\sqrt{d\ts}\,)$ with $d \in \{ 2, 3, 5 \}$. The
corresponding results from \cite[Sec.~7]{BBHLN} can be summarised as
follows, where $C_{\infty}$ denotes the infinite cyclic group.

\begin{fact}\label{fact:real-ex}
  Consider\/ $K = \QQ (\sqrt{d\ts}\,)$ for fixed\/ $d \in \{ 2,3,5\}$,
  and let\/ $V^{}_{\nts k}$ be the set of\/ $k$-free integers in\/
  $\cO^{}_{\nts K}$, that is, in\/ $\ZZ[\sqrt{2}\,]$, in\/
  $\ZZ[\sqrt{3}\,]$, or in\/ $\ZZ[\tau]$ with\/
  $\tau = \frac{1}{2} (1+\sqrt{5}\,)$. Then, the\/ $\ZZ$-linear
  bijections\/ $A$ of\/ $\cO^{}_{\nts K}$ with\/
  $A(V^{}_{\nts k}) \subseteq V^{}_{\nts k}$ are precisely the
  mappings\/ $A(x) = \varepsilon\ts\ts \sigma(x)$ with\/
  $\varepsilon \in \cO^{\times}_{\nts K} \simeq C_2 \times C_{\infty}$
  and\/ $\sigma \in \{ \id, (.)' \ts \}$, where\/ $(.)'$ denotes
  algebraic conjugation in\/ $K$.
   
  These mappings are bijections of\/ $V^{}_{\nts k}$ and form the
  group\/
  $\ts \stab (V^{}_{\nts k}) = \cO^{\times}_{\nts K} \rtimes C_2
  \simeq C_2 \times D_{\infty}$, which is a proper infinite subgroup of\/
  $\GL (2,\ZZ)$ that does not depend on\/ $k$.  \qed
\end{fact}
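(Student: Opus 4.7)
The approach mirrors that of Proposition~\ref{prop:min-two}, adapted to the real-quadratic setting where the unit group is now infinite and the ramification pattern varies with $d$. First, the coprimality trick applies verbatim: for any rational prime $p$ that is unramified in $K$ and any $x\in V^{}_{\nts k}$ with $\gcd^{}_{\cO}(x,p)=1$, the element $p^{k-1}x$ lies in $V^{}_{\nts k}$, so $A(p^{k-1}x)=p^{k-1}A(x)\in V^{}_{\nts k}$ forces $\gcd^{}_{\cO}(A(x),p)=1$. Taking $x=1$, the element $A(1)$ is supported only at ramified primes, each to exponent at most $k-1$. For $d=5$ the unique ramified rational prime is $5$, with $(5)=(\sqrt{5}\,)^2$; for $d=2$ it is $2=(\sqrt{2}\,)^2$; and for $d=3$ both $(2)=(1+\sqrt{3}\,)^2$ and $(3)=(\sqrt{3}\,)^2$ ramify. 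In each case, $A(1)$ lies in a finite union of cosets $\pi^{\bs{m}}\cO^{\times}_{\nts K}$ indexed by a finite exponent tuple $\bs{m}$.

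Next, I would use the fact that left-multiplying $A$ by any unit $\varepsilon\in\cO^{\times}_{\nts K}$ yields another admissible map, which lets me normalise $A(1)=\pi^{\bs{m}}$ for a fixed coset representative. Writing $A(\omega)=a+b\ts\omega$ in a $\ZZ$-basis $\{1,\omega\}$ of $\cO^{}_{\nts K}$, bijectivity gives $\det A=\pm 1$, while the coprimality constraint applies also to $A(\omega)$ and to $A$-images of further small-norm elements of $V^{}_{\nts k}$ (analogous to $3+\xi$ in Proposition~\ref{prop:min-two}). Combined, these force every exponent in $\bs{m}$ to vanish, so $A(1)=1$. A short norm computation on $A(\omega)=a+b\ts\omega$, demanding that $\lvert N(A(\omega))\rvert$ have only ramified prime divisors, then reduces the remaining possibilities to $A(\omega)\in\{\omega,\omega'\}$ up to sign. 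Re-inserting the normalising unit, the admissible maps are precisely those of the claimed form $A(x)=\varepsilon\ts\sigma(x)$.

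Finally, both multiplication by units and the non-trivial Galois automorphism preserve ideal-theoretic divisibility, hence they map $V^{}_{\nts k}$ bijectively onto itself, so $\stab(V^{}_{\nts k})$ coincides with the set described above. The semidirect product structure $\cO^{\times}_{\nts K}\rtimes C_2$ comes from $\sigma\varepsilon\sigma=\sigma(\varepsilon)$, and since the fundamental unit $\epsilon^{}_0$ satisfies $\sigma(\epsilon^{}_0)=\pm\epsilon_0^{-1}$, decomposing $\cO^{\times}_{\nts K}\simeq C_2\times C_{\infty}$ under this action yields $C_2\times D_{\infty}$. The main obstacle is the exponent-one case, that is, ruling out $A(1)=\pi$ for $\pi$ a ramified-prime generator. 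This is cleanest for $d=2,5$, where a single ramified generator is in play and the argument essentially repeats that of Proposition~\ref{prop:min-two}. For $d=3$, however, two independent ramified primes must be tracked simultaneously, and the infinite unit group introduces more coset representatives to eliminate; here the norm-comparison step needs to be performed carefully on a well-chosen small $V^{}_{\nts k}$-element to close the argument.
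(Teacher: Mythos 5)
First, a point of reference: the paper itself gives no proof of Fact~\ref{fact:real-ex} --- it is imported from \cite[Sec.~7]{BBHLN} --- so your attempt can only be compared with the analogous Proposition~\ref{prop:min-two} and with the general real-quadratic machinery of Section~\ref{sec:quadratic}. Your skeleton (coprimality with unramified primes, confinement of $A(1)$ to finitely many $\cO^{\times}$-cosets of products of ramified generators, normalisation by a unit, the determinant constraint) is the right one and is indeed how those arguments proceed.

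The genuine gap is in the elimination step. You claim that requiring $\lvert N(A(\omega))\rvert$ to be supported on ramified primes ``reduces the remaining possibilities to $A(\omega)\in\{\omega,\omega'\}$ up to sign'', and that the exponents in $\bs{m}$ are ``forced to vanish'' once this is combined with the determinant condition. In a real quadratic field neither conclusion follows from norms: since $\cO^{\times}$ is infinite, every attained norm value is attained by an entire $\cO^{\times}$-orbit. Concretely, for $d=2$ the candidates $A(\sqrt{2}\,)=\pm1+\sqrt{2}$ (a unit, of norm $-1$) and $A(\sqrt{2}\,)=\pm2+\sqrt{2}\in\sqrt{2}\,\cO^{\times}$ pass the norm test, as does the exponent-one case $A(1)=\sqrt{2}$ with $A(\sqrt{2}\,)$ a unit; for $d=5$ the map $1\mapsto 1$, $\tau\mapsto-\tau$ survives ``up to sign'' yet is not of the form $m^{}_{\varepsilon}\circ\sigma$. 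Each surviving case has to be killed by a bespoke coprimality check on a well-chosen element of $V^{}_{\nts k}$, the analogue of $3+\xi$ in Proposition~\ref{prop:min-two} (for instance, $4+\tau$ of norm $19$ maps under $\tau\mapsto-\tau$ to $4-\tau$ of norm $11$, violating Lemma~\ref{lem:coprime}). That this is where all the work lies is confirmed by the paper's own general treatment: Lemma~\ref{lem:W-to-W} needs a conic-intersection argument precisely because $W$ is infinite, Proposition~\ref{prop:geo-2} needs a quadratic-form argument to obtain $A(\cO^{\times})=\cO^{\times}$, Proposition~\ref{prop:not} needs large split primes, and even then the subcase in which $A(\sqrt{d}\,)$ has norm $-1$ or $-2$ (exactly $d=2,3$) is deferred back to Fact~\ref{fact:real-ex} itself. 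A proof of the Fact therefore must contain these explicit case-by-case exclusions; asserting that they ``combine to force'' the result leaves the essential difficulty untouched.
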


All examples so far have class number $1$. The natural next step is to
extend the analysis to \emph{all} quadratic fields, where key notions
have to formulated via ideals. In this process, the above examples
will emerge as cases that need special treatment, in one way or
another.

\section{General quadratic fields}\label{sec:quadratic}

Let us recall some notation and basic results on quadratic fields, all
of which can be found in \cite{Zagier}. We use the standard
parameterisation by a square{\ts}-free integer
$d\in \ZZ \setminus \{ 0,1 \}$ and consider $K=\QQ (\sqrt{d\ts}\,)$.
The field discriminant $d^{}_{\nts K}$ and the ring of integers
$\cO^{}_{\nts K}$ (which is the maximal order of $K$) are given by
\[
  d^{}_{\nts K} \, = \, \begin{cases}  d , &
    \text{if } d\equiv 1 \bmod 4 , \\
          4 \ts d , & \text{if } d \equiv 2,3 \bmod 4 ,
          \end{cases}
  \quad \text{and} \quad
    \cO^{}_{\nts K} \, = \, \begin{cases} 
       \ZZ \oplus \ZZ \frac{1+\sqrt{d\ts}}{2}  , & 
         \text{if } d\equiv 1 \bmod 4 , \\
         \ZZ \oplus \ZZ \sqrt{d\ts} ,
         & \text{if } d \equiv 2,3 \bmod 4 .
          \end{cases}
\]
The Galois group is $\Gal (K/\QQ) = \{ \id, (.)' \} \simeq C_2$, where
$(.)'$ denotes algebraic conjugation in $K$, as induced by
$\sqrt{d\ts} \mapsto - \sqrt{d\ts}$. This simply is complex
conjugation for all imaginary quadratic fields. The field norm is
given by $N(x) = x \ts x'$, which can be negative for real fields.

Let us recall Dirichlet's unit theorem for quadratic fields as follows.
\begin{fact}\label{fact:units}
  The unit group\/ $\cO^{\times}_{\nts K}$ for imaginary quadratic
  fields is always finite, and isomorphic with\/ $C_4$ for\/ $d=-1$,
  with\/ $C_6$ for\/ $d=-3$, and with\/ $C_2$ for all remaining\/ $d<0$.
    
  For all real quadratic fields, the unit group is infinite,
  $\cO^{\times}_{\nts K} \simeq C_2 \times C_{\infty}$.  \qed
\end{fact}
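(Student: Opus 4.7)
The plan is to reduce the claim to the norm equation in $\cO^{}_{\! K}$, using the standard equivalence that $u \in \cO^{}_{\! K}$ is a unit if and only if $N(u) = \pm 1$. Necessity follows from multiplicativity of $N$ on the rational integers, and sufficiency from the identity $u \ts u' = N(u) = \pm 1$, which exhibits $\pm u' \in \cO^{}_{\! K}$ as the inverse of $u$ in $\cO^{}_{\! K}$. It then suffices to describe the integer solutions of the norm equation in the two cases.

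For imaginary $d < 0$, the embedding $K \hookrightarrow \CC$ realises $N(x) = x\ts \bar{x} = |x|^2$, so $\cO^{\times}_{\nts K}$ coincides with the set of points of the rank-$2$ lattice $\cO^{}_{\! K} \subset \CC$ on the unit circle. This is finite by discreteness, and writing out the norm form according to $d \bmod 4$ and solving $a^2 + |d|\ts b^2 = 1$ (if $d \equiv 2, 3 \bmod 4$) or $a^2 + a\ts b + \frac{1-d}{4}\ts b^2 = 1$ (if $d \equiv 1 \bmod 4$) forces $b = 0$ and hence $a = \pm 1$ whenever $|d| \geqslant 5$, while $d = -1$ and $d = -3$ yield four and six solutions, respectively, giving the groups $C_4$ and $C_6$ as claimed.

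For real $d > 0$, the torsion subgroup of $\cO^{\times}_{\nts K} \subset \RR^{\times}$ consists exactly of the roots of unity contained in $K \subset \RR$, i.e.\ of $\{ \pm 1 \} \simeq C_2$. The non-torsion structure is analysed via the logarithmic embedding $L : \cO^{\times}_{\nts K} \to \RR^2$, $u \mapsto (\log|u|, \log|u'|)$. Since $|u\ts u'| = |N(u)| = 1$, the image of $L$ lies in the line $x + y = 0$; the kernel of $L$ equals $\{ \pm 1 \}$; and $L(\cO^{\times}_{\nts K})$ is discrete, inherited from the discreteness of $\cO^{}_{\! K}$ under its Minkowski embedding. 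Consequently, $L(\cO^{\times}_{\nts K})$ is isomorphic to either $\{ 0 \}$ or $\ZZ$. The genuine obstacle, and the heart of Dirichlet's theorem in this rank-one setting, is to exclude the trivial case, i.e.\ to exhibit a fundamental unit. The standard route uses Minkowski's convex body theorem: for each $t > 0$, it produces a non-zero $\alpha^{}_t \in \cO^{}_{\! K}$ with $|\alpha^{}_t| \leqslant c\ts t$ and $|\alpha^{\prime}_t| \leqslant c/t$ for a constant $c$ depending only on $\cO^{}_{\! K}$, so $|N(\alpha^{}_t)| \leqslant c^2$. Finiteness of the set of ideals of bounded norm, combined with pigeonhole as $t$ varies, then yields two associate elements whose quotient is a unit $u$ with $L(u) \neq 0$. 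This forces $L(\cO^{\times}_{\nts K}) \simeq \ZZ$, and hence $\cO^{\times}_{\nts K} \simeq C_2 \times C_{\infty}$ as asserted.
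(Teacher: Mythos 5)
Your argument is correct, but note that the paper offers no proof of this statement at all: it is labelled a Fact, introduced with ``Let us recall Dirichlet's unit theorem for quadratic fields,'' and closed with an immediate \qed, i.e.\ the authors simply cite the classical result. What you have written is the standard textbook proof of that theorem in the quadratic case, and it holds up. The reduction to $N(u)=\pm 1$ is right (the converse direction needs $u'\in\cO^{}_{\! K}$, which is automatic since $u'$ satisfies the same minimal polynomial); the imaginary case correctly reduces to the positive definite norm form, where $b=0$ is forced for $\lvert d\rvert\geqslant 5$ in both congruence classes and the exceptional solution counts $4$ and $6$ occur exactly for $d=-1$ and $d=-3$; and the real case is the rank-one instance of the logarithmic-embedding argument. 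The only step worth making fully explicit in the Minkowski/pigeonhole part is why the two associates you extract have a quotient with $L(u)\neq 0$: since $\lvert\alpha^{\prime}_t\rvert\leqslant c/t\to 0$ while $\lvert N(\alpha^{}_t)\rvert\geqslant 1$ forces $\lvert\alpha^{}_t\rvert\geqslant t/c\to\infty$, one can choose the parameters so that the finitely many principal ideals of norm at most $c^2$ are hit by elements with infinitely many distinct values of $\log\lvert\alpha^{}_t\rvert$, and then two generators of the same ideal give a unit of infinite order. With that sentence added, your proof is complete and supplies exactly what the paper takes for granted.
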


For $x\ne 0$, the principal ideal $(x)$ generated by $x$ has the
unique decomposition
\[
    (x) \, = \, \prod_{\fp} \fp^{v^{}_{\fp} (x)}
\]
into powers of prime ideals, where the product runs over all prime
ideals of $\cO^{}_{\nts K}$, but the valuation $v^{}_{\fp} (x)$
vanishes for all but (at most) finitely many of them. For quadratic
fields, a rational prime $p$ is \emph{ramified} if and only if
$p \ts\ts | \ts d^{}_{\nts K}$. In particular, one has
\begin{equation}\label{eq:ram-prod}
     \bigl( \sqrt{d\ts}\,\bigr) \, = \, \prod_{\fp | d} \fp \ts .
\end{equation}

\begin{definition}
  Let $ 2 \leqslant k \in \NN$ be fixed. An element
  $0\ne x\in \cO^{}_{\nts K}$ is called \emph{$k$-free} if
  $v^{}_{\fp} (x) < k$ for all prime ideals $\fp$ in
  $\cO^{}_{\nts K}$. The set of all $k$-free integers is denoted by
  $V^{}_{\nts k}$.
\end{definition}

When the class number is $1$, which is to say that all ideals in $\cO$
are principal, our above definition agrees with the traditional one
that $x$ is $k$-free if it is not divisible by the $k$-th power of a
prime in $\cO$. However, beyond the class number $1$ situation, one
needs to employ ideals and valuations as above.  By definition, $0$ is
never an element of $V^{}_{\nts k}$, and the following result
justifies why the study of $k=2$ plays a special role, for any fixed
square{\ts}-free $d\in \ZZ \setminus \{ 0,1 \}$.

\begin{lemma}\label{lem:2-suffices}
  Let\/ $A$ be a\/ $\ZZ$-linear bijection of\/ $\cO = \cO^{}_{\nts K}$
  that satisfies\/ $A (V^{}_{\nts k} ) \subseteq V^{}_{\nts k}$ for
  some integer\/ $k \geqslant 2$.  Then, one also has\/
  $A (V^{}_{2} ) \subseteq V^{}_{2}$.
\end{lemma}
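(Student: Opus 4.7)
The plan is to fix $x \in V^{}_{2}$ and an arbitrary prime $\fp$ of $\cO$ lying over a rational prime $p$, and then show $v^{}_{\fp}(A(x)) \leqslant 1$ by splitting into two cases according to whether $p$ is unramified or ramified in $K$; the case $k = 2$ of the lemma is just the hypothesis, so I will assume $k \geqslant 3$.

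In the unramified case, every prime $\fq \mid p$ has ramification index $1$, so any rational integer $n$ satisfies $v^{}_{\fq}(n) = v^{}_{p}(n)$ uniformly in $\fq$. Setting $M = \max_{\fq \mid p} v^{}_{\fq}(x)$, which lies in $\{0,1\}$ because $x \in V^{}_{2}$, and choosing $n = p^{k-1-M}$, a routine valuation check at each prime confirms $nx \in V^{}_{\nts k}$. The hypothesis then yields $nA(x) = A(nx) \in V^{}_{\nts k}$, from which $v^{}_{\fp}(A(x)) \leqslant M \leqslant 1$ follows immediately.

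In the ramified case, where $p\cO = \fp^{2}$, the multiplier trick above degenerates precisely when $k$ is odd and $v^{}_{\fp}(x) = 1$: ramification forces $v^{}_{\fp}(n) \in 2\ZZ$ for rational $n$, so the best bound one gets by this route is $v^{}_{\fp}(A(x)) \leqslant 2$, not $\leqslant 1$. I expect this to be the main obstacle. My plan to circumvent it is to invoke the full bijectivity of $A$ (rather than merely $A(V^{}_{\nts k}) \subseteq V^{}_{\nts k}$). Since $A$ is $\ZZ$-linear with $A(\cO) = \cO$, one has $A(p\cO) = p\ts A(\cO) = p\cO$, so $A$ descends to a bijection $\bar{A}$ of the finite quotient $\cO / p\cO$. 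Because $p\cO = \fp^{2}$, the condition $v^{}_{\fp}(y) \geqslant 2$ is equivalent to $\bar{y} = 0$, and bijectivity of $\bar{A}$ then gives $v^{}_{\fp}(A(x)) \geqslant 2 \iff v^{}_{\fp}(x) \geqslant 2$. Since $v^{}_{\fp}(x) \leqslant 1$, this yields $v^{}_{\fp}(A(x)) \leqslant 1$, and combining the two cases completes the argument.
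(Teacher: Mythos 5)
Your proof is correct, and the ramified case is handled by a genuinely different and considerably shorter route than the paper's. For unramified primes you do essentially what the paper does: multiply by a suitable power of $p$ to stay inside $V^{}_{\nts k}$, apply $A$, and read off the valuation bound (the paper simply uses $p^{k-2}x$ and concludes $v^{}_{\fp}(A(x))\leqslant 1$ directly, while your exponent $k-1-M$ gives the marginally sharper bound $v^{}_{\fp}(A(x))\leqslant M$, which is not needed). For ramified primes, the paper also starts from the multiplier trick, obtains $v^{}_{\fp}(A(x))\leqslant 2\ts v^{}_{\fp}(x)$ for $k$ odd, and then excludes the value $2$ by a page-long analysis that splits according to $d \bmod 4$, computes indices such as $\bigl[\cO : A(x\ts\ZZ[\sqrt{d\ts}\,])\bigr]$, and derives a contradiction from $p^2 \mid N(x)$; it even needs a separate treatment of $\fp^{}_2$ when $d\equiv 3 \bmod 4$. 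Your observation that $A(p\ts\cO)=p\ts A(\cO)=p\ts\cO$ for any $\ZZ$-linear bijection, combined with $\fp^2=(p)$ and the equivalence $v^{}_{\fp}(y)\geqslant 2 \Leftrightarrow y\in\fp^2$, replaces all of this: the condition $v^{}_{\fp}(x)\leqslant 1$ at a ramified prime is preserved by \emph{every} $\ZZ$-linear bijection of $\cO$, without any appeal to the hypothesis $A(V^{}_{\nts k})\subseteq V^{}_{\nts k}$. This makes transparent that the $k$-free hypothesis is only doing work at the unramified primes, which the paper's presentation obscures. The argument is airtight as stated; the only thing worth making explicit in a final write-up is the standard fact $y\in\fp^{\ts n}\Leftrightarrow v^{}_{\fp}(y)\geqslant n$ from unique factorisation of ideals in a Dedekind domain.
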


\begin{proof}
  Assume $A ( V^{}_{\nts k} ) \subseteq V^{}_{\nts k}$, let
  $x\in V^{}_{2}$ and consider any non-ramified (rational) prime $p$,
  so $p \nts\nmid\! d^{}_{\nts K}$. Then,
  $p^{k-2} x \in V^{}_{\nts k}$, and $\ZZ$-linearity of $A$ implies
\[
     p^{k-2} A(x) \, = \, A(p^{k-2} x) \, \in \, V^{}_{\nts k} \ts ,
\]
thus giving $v^{}_{\fp} \bigl( A(x)\bigr) \leqslant 1$ for all
$\fp \ts | \ts \ts p$.

It remains to consider the ramified primes, that is, primes with
$p \ts \ts | \ts d^{}_{\nts K}$.  For any such prime, we have
$(p) = p \ts\ts \cO = \fp^2$, with $\fp$ denoting the corresponding
prime ideal over $p$ in $\cO$.  When $k$ is even and $x\in V^{}_{2}$,
one has
\[
    v^{}_{\fp} \bigl( p^{\frac{k}{2} - 1} x \bigr) \, = \,
    v^{}_{\fp} \bigl( p^{\frac{k}{2}-1}\bigr) + v^{}_{\fp} (x)
    \, = \,  k - 2 + v^{}_{\fp} (x) \, \leqslant \, k-1 \, < \, k \ts ,
\]
so $p^{\frac{k}{2} - 1} x \in V^{}_{\nts k}$, hence also
$A \bigl( p^{\frac{k}{2} - 1} x \bigr) = p^{\frac{k}{2} - 1} A(x) \in
V^{}_{\nts k}$. With
$v^{}_{\fp} \bigl( p^{\frac{k}{2} - 1}\bigr) = k-2$, this implies
$v^{}_{\fp} \bigl( A(x) \bigr) \leqslant 1$, and we get
$A ( V^{}_{2} ) \subseteq V^{}_{2}$ for $k$ even.

Now, consider $k$ odd and $x\in V^{}_{2}$, so
$v^{}_{\fp} (x) \in \{ 0,1 \}$. Here, we have
\[
   v^{}_{\fp} \bigl( p^{\frac{k-1}{2} - 
     v^{}_{\fp} (x)} \bigr) \, = \, k-1 - 2 \ts v^{}_{\fp} (x)
     \, < \, k - 2 \ts v^{}_{\fp} (x) \ts ,
\]
and thus $p^{\frac{k-1}{2} - v^{}_{\fp} (x)} x \in V^{}_{k}$.
Now, $p^{\frac{k-1}{2} - v^{}_{\fp} (x)} A(x) =
A \bigl( p^{\frac{k-1}{2} - v^{}_{\fp} (x)} x \bigr) \in V^{}_{k}$ gives
\begin{equation}\label{eq:max-2}
    v^{}_{\fp} (A(x)) \, \leqslant \, 2 \ts v^{}_{\fp} (x) \ts .
\end{equation}
When $v^{}_{\fp} (x) = 0$, we also get $v^{}_{\fp} (A(x)) =0 $, and we
are good in this case, too. Next, consider $v^{}_{\fp} (x) = 1$, where
$v^{}_{\fp} \bigl( A(x) \bigr) \leqslant 2$.  From
\eqref{eq:ram-prod}, if $\fp \ts | \ts\ts d$, we see that
$v^{}_{\fp} \bigl( x \sqrt{d\ts} \, \bigr) = 2$. As $\fp^2 = (p)$, we
have $x\sqrt{d\ts} = p \ts\ts y$ for some $y\in\cO$, hence
$A \bigl( x \sqrt{d\ts} \, \bigr) = p A(y) \in p \ts \cO = (p)$.
Suppose $v^{}_{\fp} \bigl( A(x)\bigr) \geqslant 2$, which means
$v^{}_{\fp} \bigl( A(x)\bigr) = 2$ by \eqref{eq:max-2}.  Then, we also
have $A(x) \in p \ts \cO$ and
\[
     A \bigl( x \ts (\ZZ \oplus \ZZ \sqrt{d\ts}\,) \bigr) \, = \:
      \ZZ A(x) \oplus \ZZ A ( x \sqrt{d\ts}\,) \ts . 
\]
Consequently, $p^2$ divides the index
$\bigl[\cO : A \bigl( x \ts \ZZ [\sqrt{d\ts}\,] \bigr) \bigr]$.

When $d \equiv 1 \bmod 4$, we know that $2$ is not ramified, so any
ramified prime $p$ must be odd. Here, we have
$\bigl[ \cO : \ZZ[\sqrt{d\ts}\,] \bigr] = 2$, hence also
$\bigl[ A(x \ts \cO) : A \bigl( x \ts \ZZ[\sqrt{d\ts} \, ]
\bigr)\bigr] = 2$, which implies 
\[
    p^2   \: \big| \:
    [\cO : A (x\ts \cO) ]  \, = \, [ \cO : x \ts \cO]
    \, = \, \No (x \ts \cO) \ts ,
\]
where $\No (\fb) \defeq [\cO : \fb]$ denotes the \emph{absolute} norm
of an ideal $\fb$ in $\cO$. When $\fb$ is a principal ideal, hence
$\fb = (z) = z \ts \cO$ for some $z\in\cO$, the absolute norm is
related to the field norm by $\No (\fb) = \lvert N(z) \rvert$, so we
get $p^2 \ts\ts | \ts\ts N(x)$.

When $d\equiv 2,3 \bmod 4$, the condition $p^2 \ts\ts | \ts\ts N (x)$
follows directly, without the extra index-$2$ argument. In fact, it
then also applies to $p=2$ for $d\equiv 2$. On the other hand, since
$v^{}_{\fp} (x) = 1$, we know that $N (x)$ is exactly divisible by
$p$, hence not by $p^2$. This contradiction shows that
$v^{}_{\fp} \bigl( A (x) \bigr) \geqslant 2$ is impossible for any odd
ramified prime, as well as for $p=2$ when $d\equiv 2 \bmod 4$.

Finally, when $d\equiv 3 \bmod 4$, the prime $2$ is ramified, and we
need to check what happens with the corresponding prime ideal
$\fp^{}_{2}$, where $\fp^2_2 = 2 \ts \cO$. Here,
$\cO = \ZZ [ \sqrt{d\ts}\,]$, and one has
\[
  x^2 - d \, \equiv \, x^2 - 1 \, \equiv \,
  (x-1)^2 \quad \bmod  2 \ts .
\]
Invoking \cite[Thm.~I.8.3]{N}, we get that
$\fp^{}_{2} = (2, \sqrt{d\ts} - 1)$, which comprises
$\sqrt{d\ts} - 1 \in \fp^{}_{2}$, so that
$v^{}_{\fp^{}_2} (\sqrt{d\ts} - 1) \geqslant 1$.  Since
$v^{}_{\fp^{}_2} (2) = 2$, we then actually have
$v^{}_{\fp^{}_2} (\sqrt{d\ts} - 1) = 1$, as we would otherwise get
$ (2, \sqrt{d\ts}-1) \subseteq \fp^2_2$ and hence a contradiction.
Now, $v^{}_{\fp^{}_{2}} (x) = 1$ implies
$v^{}_{\fp^{}_{2}} \bigl( x \ts (\sqrt{d\ts}-1)\bigr) =2$, and thus
$x \ts (\sqrt{d\ts}-1) = 2 y$ for some $y\in \cO$.

Now, suppose $v^{}_{\fp^{}_{2}} \bigl( A(x)\bigr) \geqslant 2$, hence
$v^{}_{\fp^{}_{2}} \bigl( A(x)\bigr) = 2$ by \eqref{eq:max-2}
again. As we now have $\cO = \ZZ \oplus \ZZ (\sqrt{d\ts}-1)$, we see
that
$A(x\ts \cO) = \ZZ A(x) \oplus \ZZ A \bigl( x (\sqrt{d\ts}-1) \bigr)
\subseteq 2 \ts \cO$. This gives
\[
    4 \: \big| \: [\cO : A (x\cO)] \, = \, [\cO : x \ts \cO]
    \, = \, \No (x\cO) \, = \, \lvert N (x) \rvert \ts ,
\]
but $4$ cannot divide $N (x)$ for $x \in V^{}_{2}$, whence
$v^{}_{\fp^{}_{2}} \bigl( A(x)\bigr) \geqslant 2$ is impossible
as well.
\end{proof}

In view of Lemma~\ref{lem:2-suffices}, we now concentrate our
attention to the case $k=2$, and set $V\nts\nts = V^{}_{2}$.

\begin{definition}
  Let $K$ be a quadratic field, with ring of integers
  $\cO = \cO^{}_{\nts K}$ and $V\nts\subset \cO$ the subset of
  square{\ts}-free elements. A $\ZZ$-linear bijection $A$ of $\cO$ is
  called a \emph{preserving map} (PM) for $V$ if $A(V)\subseteq V\!$,
  and a \emph{strongly preserving map} (SPM) if $A(V) = V\!$.

  The set of all PMs for $V$ constitute the \emph{stabiliser} 
  of $V\!$, denoted by $\stab (V)$. 
\end{definition}

Note that the restriction of a PM to $V$ is obviously injective, but
it is not clear a priori whether it is also surjective. Since the
$\ZZ$-linear bijections of $\cO$ form a group, the subset of PMs for
$V$ inherits a semi-group structure with a unit, which is to say that
$\stab (V)$ is a monoid. However, whether or when $\stab (V)$ is a
group remains to be determined.

Let us first look at the coprimality structure within $V\!$. We say
that two elements $x,y \in \cO$ are \emph{coprime}, denoted by
$(x,y)=1$, if the principal ideals $(x)$ and $(y)$ have disjoint
decompositions into prime ideals of $\cO$.

\begin{lemma}\label{lem:coprime}
  Let\/ $p$ be a rational prime with\/
  $p \nts \nmid \nts\nts d^{}_{\nts K}$, and let\/ $x\in V$ be coprime
  with\/ $p$, so\/ $(p,x)=1$.  Then, for any\/ $A \in \stab (V)$, one
  also has\/ $(p, A(x))=1$.
\end{lemma}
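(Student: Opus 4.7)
The plan is to adapt the coprimality trick already used at the start of the proof of Proposition~\ref{prop:min-two} to the ideal-theoretic setting. The key observation is that when $p$ is unramified, $(p)$ factors as either a single prime ideal $\fp$ of norm $p^2$ (inert case) or as a product $\fp \ts \fq$ of two distinct prime ideals of norm $p$ (split case). In both cases, for every prime ideal $\fp$ of $\cO^{}_{\nts K}$ lying above $p$, one has $v^{}_{\fp}(p) = 1$. By contrast, a ramified prime would give $v^{}_{\fp}(p) = 2$, which is precisely why the assumption $p \nmid d^{}_{\nts K}$ enters the statement.

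First I would check that $p \ts x \in V$. Since $x \in V$ satisfies $v^{}_{\fp}(x) < 2$ for every prime ideal of $\cO^{}_{\nts K}$, and since $(p,x)=1$ gives $v^{}_{\fp}(x)=0$ for all $\fp \,|\, p$, one has $v^{}_{\fp}(p \ts x) = v^{}_{\fp}(p) = 1 < 2$ for each such $\fp$, while $v^{}_{\fp}(p \ts x) = v^{}_{\fp}(x) < 2$ for every prime ideal $\fp$ not above $p$. Hence $p\ts x \in V$.

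Next, I would invoke the PM property together with the $\ZZ$-linearity of $A$: the element
\[
   A(p\ts x) \, = \, p \ts A(x)
\]
then lies in $V$ by assumption. Reading off valuations at any prime $\fp$ above $p$, this yields
\[
   1 + v^{}_{\fp}\bigl(A(x)\bigr) \, = \, v^{}_{\fp}(p) + v^{}_{\fp}\bigl(A(x)\bigr)
      \, = \, v^{}_{\fp}\bigl(p\ts A(x)\bigr) \, < \, 2 \ts ,
\]
forcing $v^{}_{\fp}(A(x)) = 0$. Since this holds for every prime ideal $\fp$ above $p$, we conclude $(p, A(x)) = 1$, as claimed.

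There is no genuine obstacle here; the only point that needs to be handled with some care is keeping the distinction between inert and split primes, but in both cases the identity $v^{}_{\fp}(p)=1$ for all $\fp \,|\, p$ makes the single display above do all the work. The ramified case, where $v^{}_{\fp}(p) = 2$ and the analogous inequality would only give $v^{}_{\fp}(A(x)) < 0$ after substitution (i.e.\ no information), is exactly what the hypothesis on $d^{}_{\nts K}$ excludes, and will presumably have to be treated separately later in the paper.
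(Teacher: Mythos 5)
Your proof is correct and follows the same route as the paper: unramified means $v^{}_{\fp}(p)=1$ for all $\fp\mid p$, so $p\ts x\in V$, and then $p\ts A(x)=A(p\ts x)\in V$ forces $(p,A(x))=1$; you have merely made the valuation bookkeeping explicit where the paper compresses it into ``$p$ is not a square''. (Only your closing aside is slightly off: in the ramified case the argument already breaks at the first step, since $p\ts x\notin V$, rather than at the final inequality.)
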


\begin{proof}
  By assumption, $p$ is not ramified, and thus not a square, so the
  condition $x\in V$ together with $(p,x)=1$ implies $p\ts x\in V\!$,
  hence also $p A(x) = A(p\ts x) \in V$ due to $A(V) \subseteq V$
  together with $\ZZ$-linearity.  But this is only possible if
  $(p,A(x))=1$ as claimed.
\end{proof}

In generalisation of previous arguments, we now need to
consider the set
\[
  W \, \defeq \, \{ x \in V : v^{}_{\fp} (x) > 0
  \;  \Rightarrow \;
  \fp \ts\ts | \ts\ts (d^{}_{\nts K}) \} \ts .
\]
The norms of elements in $W$ have a prime decomposition into ramified
primes only.  In particular, given $x\in W$ and any prime ideal $\fp$
over a ramified $p$, one has $v^{}_{\fp} (x) \in \{ 0,1 \}$, while all
other valuations vanish.

\begin{lemma}\label{lem:W-to-W}
  If $A$ is a\/ \textnormal{PM} for\/ $V\!$, it
  satisfies\/ $A(W) = W\!$.
\end{lemma}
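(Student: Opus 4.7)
I would prove $A(W) = W$ by establishing the two inclusions separately. The forward direction $A(W) \subseteq W$ is a clean consequence of Lemma~\ref{lem:coprime}. The reverse direction $W \subseteq A(W)$ is the subtler part: finiteness takes care of it in the imaginary quadratic case, but a more delicate orbit-theoretic argument is needed when $K$ is real quadratic, and that is where I expect the main obstacle to lie.

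For the forward inclusion, let $x \in W$. Since $A$ is a PM, $A(x) \in V$, so $A(x)$ is squarefree. Now take any rational prime $p$ with $p \nmid d^{}_{\nts K}$. The defining property of $W$ means that no prime ideal over $p$ appears in the factorisation of $(x)$, so in particular $(p,x) = 1$. Lemma~\ref{lem:coprime} then yields $(p, A(x)) = 1$, so no prime ideal over $p$ divides $(A(x))$. Running this over all non-ramified $p$ and combining with the squarefreeness of $A(x)$ shows that only ramified prime ideals (each to at most the first power) appear in $(A(x))$, hence $A(x) \in W$.

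For the reverse inclusion, I would split into cases according to the signature of $K$. If $K$ is imaginary quadratic, then $\lvert N(x)\rvert = \lvert x\rvert^2$ in the standard embedding $\cO \hookrightarrow \CC$, and $\lvert N(x)\rvert$ divides the finite product of all ramified rational primes whenever $x \in W$. Thus $W$ sits in a bounded region of the discrete lattice $\cO$, so $W$ is finite, and the injectivity of $A$ on $\cO$ together with $A(W) \subseteq W$ forces $A|_W$ to be a bijection of $W$. If instead $K$ is real quadratic, $W$ decomposes as a disjoint union of infinitely many elements organised into finitely many infinite $\cO^{\times}$-orbits, and the finiteness argument breaks down.

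My strategy in the real case is to descend the problem to the finite quotient $W/\cO^{\times}$, which is indexed by the (finitely many) principal squarefree ideals of $\cO$ whose prime decomposition involves only ramified primes, and to show that $A$ induces an injection on this quotient. The main obstacle is precisely well-definedness: since $A$ is only $\ZZ$-linear, not $\cO$-linear, it need not respect the $\cO^{\times}$-action on $W$, so obtaining a well-defined map on $W/\cO^{\times}$ requires a genuine argument — probably drawing on the coprimality structure of Lemma~\ref{lem:coprime} and the $\GL(2,\ZZ)$-nature of $A$ to control how $A$ interacts with unit multiplication, perhaps via the logarithmic embedding of $\cO^{\times}$ into $\RR^2$ on which $A$ acts by a matrix of determinant $\pm 1$. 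Once injectivity on the finite quotient is in hand, surjectivity there, and hence the desired equality $A(W) = W$, follows.
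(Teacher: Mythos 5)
Your forward inclusion $A(W)\subseteq W$ and your treatment of the imaginary quadratic case (finiteness of $W$ plus injectivity) coincide with the paper's proof. The real quadratic case, however, is where your proposal stops short of an actual argument, and the sketch you give has two genuine gaps.

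First, the well-definedness problem you name is not a technical nuisance to be smoothed over; it is the whole difficulty. Since $A$ is only $\ZZ$-linear, there is no reason why $x\sim y$ modulo $\cO^{\times}$ should imply $A(x)\sim A(y)$, so there is no induced map on $W/\cO^{\times}$ at all, and none of the tools you gesture at (coprimality, the logarithmic embedding) obviously produces one. In fact, proving that $A$ essentially commutes with unit multiplication is close in spirit to the main theorems of the section, which are proved \emph{after} and \emph{using} this lemma — so this route risks circularity. Second, even granting a well-defined bijection on the finite quotient $W/\cO^{\times}$, the conclusion $A(W)=W$ does not follow: each fibre is an infinite $\cO^{\times}$-orbit, and an injective self-map of an infinite set need not be surjective, so surjectivity on the quotient leaves open that $A$ misses elements inside an orbit.

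The paper's argument avoids both issues by working with the level sets of the norm form rather than with unit orbits. Writing $W=\bigcup_{c\in C}S^{\ts\prime}_c$ with $C$ finite and $S^{\ts\prime}_c=\{x\in W: N(x)=c\}$, each $S_c$ (in coordinates) is the intersection of $\ZZ^2$ with a non-degenerate conic $\widehat{S}_c$. Since $A\in\GL(2,\ZZ)$ permutes the finitely many infinite sets $S_c$ amongst themselves, a pigeonhole argument on powers of $A$ produces an $n$ and a $c^{}_1$ with $\widehat{S}_{c^{}_1}\cap A^n(\widehat{S}_{c^{}_1})$ infinite; as five points determine a conic, this forces $A^n(\widehat{S}_{c^{}_1})=\widehat{S}_{c^{}_1}$ and hence $A^n(S_{c^{}_1})=S_{c^{}_1}$. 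Iterating over the remaining values of $c$ yields a power $A^m$ with $A^m(W)=W$, and then the chain $W=A^m(W)=A^{m-1}(A(W))\subsetneq A^{m-1}(W)\subseteq W$ shows that $A(W)\subsetneq W$ is impossible. You would need to either reproduce an argument of this kind or find a genuinely new way around the two gaps above; as it stands, the real quadratic case is not proved.
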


\begin{proof}
  Due to the coprimality structure stated in Lemma~\ref{lem:coprime},
  it is clear that $A(W) \subseteq W\!$. When $K$ is imaginary
  quadratic, $W$ is a finite set, and bijectivity of $A$ then implies
  $A(W)=W\!$.  So, it remains to consider the case that $K$ is real
  quadratic, where the unit group, and then also $W\!$, is an infinite
  set.

  The norm on $W$ takes only finitely many distinct values. This is so
  because there are only finitely many ramified primes that can show
  up in the prime decomposition of $N(x)$ for any $x\in W\!$, with
  powers $0$ or $1$, while units have norm $\pm 1$.  Let $C$ be the
  set of these values and
  $S^{\ts\prime}_c \defeq \{ x\in W : N(x) = c \}$, so that
  $W=\bigcup_{c\ts\in C} S^{\ts\prime}_c$ is a finite union of
  disjoint sets, where each $S^{\ts\prime}_c$ itself is infinite,
   because the unit group is already infinite.
  
  Let us write $\cO = \ZZ \oplus \ZZ \delta = \ZZ[\delta]$ with
\begin{equation}\label{eq:basis}
     \delta \, = \,
     \begin{cases} \sqrt{d\ts} , & d\equiv 2,3 \bmod 4 \ts , \\
         \frac{1}{2} (1 + \sqrt{d\ts}\,), & d\equiv 1 \bmod 4  \ts .
         \end{cases}
\end{equation}
  With this choice of an integral basis for $\cO$, we have
  $x = a + b \delta$ with $a,b \in \ZZ$ for any $x\in \cO$, and the
  field norm is $N(x) = Q(a,b)$, where $Q$ is a non-degenerate
  quadratic form in $a$ and $b$. Thus, we have
\begin{equation} \label{eqn:S_c}
  S_c \, \defeq \, \{ (a,b) \in \ZZ^2 : a + b\ts \delta \in
  S^{\ts\prime}_c \} \, = \, \{ (a,b) \in \ZZ^2 : Q(a,b) = c \} \ts ,
\end{equation}
where the equality follows because any element $a+b\ts \delta$ with
$Q(a,b) = c$ is square{\ts}-free by construction.

Written in the $\ZZ$-basis $\{ 1, \delta\}$, our map $A$ is
represented by a $\GL (2,\ZZ)$-matrix, also called $A$ for simplicity,
which acts linearly on all of $\RR^2$.  We thus see that $S_c$ is the
intersection of a quadratic curve (or conic)
$\widehat{S}_c \subset \RR^2$ with $\ZZ^2$. Since $A$ maps
$\bigcup_{c\ts\in C} S_c$ into itself, any point from $S_c$ must be
mapped to a point from $S_{c^{\ts\prime}}$ for some
$c^{\ts\prime}\in C$. Clearly, $C$ is finite but $S_c$ is not,
  because $(a,b) \leftrightarrow a + b\ts \delta$ is a bijection between
  $S_c$ and $S'_{c}$, where the latter is infinite. Now, Dirichlet's
pigeon hole principle implies that, for some power $A^n$ of $A$, there
exists a $c^{}_{1}\in C$ such that
$S_{c^{}_{1}} \cap A^n(S_{c^{}_{1}})$ is an \emph{infinite} set, so
also $\widehat{S}_{c^{}_{1}} \cap A^n(\widehat{S}_{c^{}_{1}})$ is
infinite.

Now, $A^n (\widehat{S}_{c^{}_{1}})$ is a non-degenerate conic
as well, because it is the image of one under a linear
bijection. Since non-degenerate conics cannot intersect in infinitely
many points unless these conics are equal, because $5$ points 
determine a conic, see \cite[Sec.~14.7]{Cox},  we get
$\widehat{S}_{c^{}_{1}} = A^n(\widehat{S}_{c^{}_{1}})$. Now, we have
\[
  A^n (S_{c^{}_{1}}) \, = \,
  A^n \bigl(\widehat{S}_{c^{}_{1}} \cap \ZZ^2 \bigr)
  \, = \, A^n (\widehat{S}_{c^{}_{1}}) \cap A^n (\ZZ^2) \, = \,
  \widehat{S}_{c^{}_{1}} \cap \ZZ^2 \, = \, S_{c^{}_{1}} \ts ,
\]
which also implies that $A^n$ maps
$\bigcup_{c \in C\setminus \{c^{}_{1}\}} S_{c}$ into itself.

At this point, we can repeat the argument for the smaller union, where
we get some power of $A^n$ that maps some $S_{c^{\ts\prime}}$ into
itself. After finitely many steps, a single $S_{c^{\ts\prime\prime}}$
remains, which is then automatically invariant, by a simplified
argument of the above type.  So, we see that some power of $A$, say
$A^m$, satisfies $A^m (S_c) = S_c$ for all $c \in C$ and thus maps $W$
onto itself. Now, if $A(W)$ were a strict subset of $W\!$, this would
imply
\[
  W \, = \, A^m (W) \, = \, A^{m-1} (A(W))
  \, \subsetneq \, A^{m-1} (W) \, \subseteq \, W ,
\]
which is a contradiction, so we also get $A(W) = W$ as claimed.
\end{proof}

Since algebraic conjugation maps $W$ onto itself, the following
consequence is immediate.

\begin{fact}\label{fact:Gal-W}
  One has\/ $\sigma (W)  = W$ for all\/ $\ts\sigma \in
  \Gal (K/\QQ)$.   \qed
\end{fact}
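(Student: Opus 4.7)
The plan is to reduce to the single non-trivial automorphism $\sigma = (.)'$ and then exploit the fact that ramified primes are Galois-stable in a quadratic extension. Since $\Gal(K/\QQ) \simeq C_2$ is generated by $\sigma$ and $\sigma^2 = \id$, it suffices to establish the inclusion $\sigma(W) \subseteq W$; applying $\sigma$ a second time then automatically yields equality.

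First, I would record the standard transformation rule for the valuations under the Galois action, namely
\[
   v^{}_{\fp} \bigl( \sigma (x) \bigr) \, = \, v^{}_{\sigma^{-1}(\fp)} (x)
\]
for every prime ideal $\fp$ of $\cO$ and every $x\in\cO$. From this, $\sigma$ preserves square-freeness, so $\sigma (V) = V\!$, which takes care of the first defining condition of $W\!$.

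Next, I would verify that every ramified prime ideal $\fp$ of $\cO$ is a fixed point of $\sigma$. Indeed, if $p$ is a ramified rational prime, then $p\ts\cO = \fp^2$ by the description recalled just before \eqref{eq:ram-prod}, so $\fp$ is the \emph{unique} prime ideal of $\cO$ above $p$. Since $\sigma (p\ts \cO) = p\ts \cO$, the ideal $\sigma (\fp)$ lies above $p$ as well, and unicity forces $\sigma (\fp) = \fp$. Consequently, $\sigma$ permutes the set of prime ideals dividing $(d^{}_{\nts K})$ trivially.

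Putting the pieces together: for $x\in W$, the valuation $v^{}_{\fq} (x)$ vanishes for every prime ideal $\fq$ of $\cO$ that does not divide $(d^{}_{\nts K})$. For any such $\fq$, the ideal $\sigma^{-1}(\fq)$ is again unramified, whence $v^{}_{\fq} (\sigma (x) ) = v^{}_{\sigma^{-1}(\fq)} (x) = 0$. Together with $\sigma (x) \in V\!$, this shows $\sigma (x) \in W\!$, and hence $\sigma (W) \subseteq W\!$. Applying $\sigma$ once more gives $W = \sigma^2 (W) \subseteq \sigma (W)$, so equality holds. The argument is essentially routine once the Galois-stability of ramified primes is isolated, which is the only non-trivial point.
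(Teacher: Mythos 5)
Your argument is correct and is exactly the reasoning the paper leaves implicit: it states the fact as immediate from the observation that algebraic conjugation maps $W$ onto itself, which is what you verify via the valuation rule $v^{}_{\fp}(\sigma(x)) = v^{}_{\sigma^{-1}(\fp)}(x)$ and the Galois-stability of the (unique) prime above each ramified rational prime. No gaps; your write-up simply supplies the routine details the paper omits.
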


 We will also make use of the following easy observation.

\begin{fact}\label{fact:square-free-divisor}
  Let\/ $x \in W\!$. Then, $N(x)$ is a square-free divisor
  of\/ $d_{K}$. \qed
\end{fact}

If $\varepsilon$ is a unit in $\cO$, we use $m^{}_{\varepsilon}$ to
denote the mapping $x \mapsto m^{}_{\varepsilon} (x) \defeq
\varepsilon \ts x$.

\begin{prop}\label{prop:unit-suffices}
  Let\/ $K$ be a quadratic field and\/ $\cO = \cO^{}_{\! K}$ its
  ring of integers.  Then, for any\/ $A \in \stab (V)$, the following
  statements are equivalent.
  \begin{enumerate}\itemsep=2pt
  \item $A=m^{}_{\varepsilon} \nts\circ \sigma$ for some\/
     $\varepsilon \in \cO^{\times}$ and\/ $\sigma \in
     \Gal (K/\QQ)$.
  \item $A(1) \in \cO^{\times}\!$.
\end{enumerate}
\end{prop}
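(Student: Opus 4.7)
The direction $(1) \Rightarrow (2)$ is immediate from $A(1) = \varepsilon\ts\sigma(1) = \varepsilon \in \cO^{\times}$. For the converse, my plan is first to reduce to the normalised case $A(1) = 1$, and then to identify the resulting map with $\id$ or with the Galois involution $\sigma$.

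The reduction step is straightforward: setting $\varepsilon \defeq A(1) \in \cO^{\times}$, the map $B \defeq m^{}_{\varepsilon^{-1}} \nts\circ A$ is a $\ZZ$-linear bijection of $\cO$ with $B(V) \subseteq V$, since multiplication by a unit preserves ideal factorisations and hence $V$. As $B(1) = 1$ by construction, it suffices to show $B \in \{\id, \sigma\}$. Writing $\cO = \ZZ \oplus \ZZ\ts\delta$ for $\delta$ as in \eqref{eq:basis} and $B(\delta) = a + b\ts\delta$ with $a, b \in \ZZ$, the condition $B \in \GL(2, \ZZ)$ forces $b = \pm 1$, so it remains to determine the integer $a$.

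The main obstacle, and the core of the argument, is precisely this determination of $a$ from $B(V) \subseteq V$. My plan is to reduce modulo an unramified \emph{split} rational prime $p$, so that $(p) = \fp\ts\fp'$ and $\cO/(p) \cong \FF_p \times \FF_p$ by the Chinese remainder theorem, with $\delta$ mapping to a pair $(r, r')$ of distinct roots of its minimal polynomial modulo $p$. Lemma~\ref{lem:coprime}, combined with the bijectivity of $B$ on the finite quotient $\cO/(p)$, implies that the induced $\FF_p$-linear map $\bar B$ preserves the union $L_1 \cup L_2$ of the two distinct $1$-dimensional subspaces $L_1 \defeq \fp/(p)$ and $L_2 \defeq \fp'/(p)$ of $\FF_p^2$. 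A short linear-algebra argument then forces $\bar B$ either to fix both lines or to swap them.

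A direct matrix computation of $\bar B$ in the basis of $\FF_p \times \FF_p$ given by the CRT decomposition translates this dichotomy into explicit divisibility conditions on $a$: in the \emph{fix} case, $b = 1$ forces $p \mid a$, whereas $b = -1$ is impossible for odd $p$; in the \emph{swap} case, $b = 1$ is impossible for odd $p$, while $b = -1$ forces $p \mid (a - r - r')$, where the trace $r + r'$ equals $0$ if $d \equiv 2, 3 \pmod 4$ and equals $1$ if $d \equiv 1 \pmod 4$. Since infinitely many odd rational primes split in $K/\QQ$ (by Dirichlet's theorem applied to the splitting condition), the corresponding divisibility must hold for infinitely many $p$, pinning $a$ down to $0$ when $b = 1$ and to the trace of $\delta$ when $b = -1$. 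This yields $B \in \{\id, \sigma\}$ and hence $A \in \{m^{}_{\varepsilon}, m^{}_{\varepsilon} \circ \sigma\}$, as claimed.
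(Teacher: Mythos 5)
Your proposal is correct in both directions, but the route for $(2)\Rightarrow(1)$ is genuinely different from the paper's. After the common reduction to $B(1)=1$ and $\det$-argument giving $b=\pm 1$, the paper pins down $a$ by working at the \emph{ramified} primes: it uses $\sqrt{d\ts}\in W$ together with Lemma~\ref{lem:W-to-W} ($A(W)=W$) and Fact~\ref{fact:Gal-W} to show inductively that $2\ts n\ts\tilde{a}\pm\sqrt{d\ts}\in W$ for all $n$, whence the norms $4\ts n^2\tilde{a}^2-d$ must all divide $d^{}_{\nts K}$ (up to the factor $2$), which is unbounded unless $\tilde{a}=0$. You instead work at the \emph{split unramified} primes: reducing modulo $(p)=\fp\ts\fp'$, using Lemma~\ref{lem:coprime} plus finiteness to see that the reduced map $\bar{B}\in\GL(2,\FF_p)$ permutes the two lines $\fp/(p)$ and $\fp'/(p)$, and then reading off congruences on $a$ that hold for infinitely many $p$. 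Your computation of the fix/swap dichotomy and the resulting conditions ($b=1\Rightarrow p\mid a$; $b=-1\Rightarrow p\mid a-\mathrm{Tr}(\delta)$, with the other combinations excluded for odd $p$) is correct, and the conclusion $B\in\{\id,\sigma\}$ follows. A notable advantage of your route is that it bypasses Lemma~\ref{lem:W-to-W} entirely, which is the hardest ingredient of the paper's proof in the real quadratic case (the conic argument); you only need the elementary Lemma~\ref{lem:coprime} and the existence of infinitely many split primes.

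One step you should make explicit: to deduce that $\bar{B}$ maps $\FF_p^2\setminus(L_1\cup L_2)$ into itself from Lemma~\ref{lem:coprime}, you need every residue class $c\bmod p\ts\cO$ outside $L_1\cup L_2$ to contain a \emph{square-free} representative, since the lemma only controls images of elements of $V$. This is true (for $c$ coprime to $p$, the elements of $c+p\ts\cO$ divisible by some $\fq^2$ have relative density at most $\sum_{\fq}\No(\fq)^{-2}\leqslant 2\sum_q q^{-2}<1$, so square-free representatives exist), but it is a genuine hypothesis of your argument and should be stated and justified rather than absorbed into ``combined with the bijectivity of $B$ on the finite quotient''.
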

  
\begin{proof}
  The implication $(1) \Rightarrow (2)$ is clear. To show the converse
  direction, we observe that $A(1) = \varepsilon \in \cO^{\times}$
  implies that $A' = m^{}_{\varepsilon^{-1}} \circ A$ is a mapping of
  the same type, wherefore we may assume $A(1) = 1$ without loss of
  generality. Now, we have to distinguish two cases.\smallskip
    
\textit{Case} $1$: $d \equiv 1 \bmod 4$, so $\cO = \ZZ[\delta]$ with
  $\delta = \frac{1 + \sqrt{d\ts}}{2}$. Let
  $A(\delta) = a + b \ts \delta$ with $a,b \in \ZZ$. Since $A$ is a
  bijection of $\cO$, $A(1)=1$ and $A(\delta)$ must generate $\cO$ as
  a $\ZZ$-module, so $A$ is represented by the matrix
  $\left(\begin{smallmatrix} 1 & a \\ 0 & b \end{smallmatrix} \right)
  \in \GL (2,\ZZ)$, hence $b = \det (A) = \pm 1$. Set
  $\tilde{a}= a + \frac{1}{2} (b-1)$.

  Now, we have
  $A(\sqrt{d\ts}\,) = A(2 \delta - 1) = 2 \ts a - 1 \pm (1 +
  \sqrt{d\ts}\,) = 2 \ts \tilde{a} \pm \sqrt{d\ts}$, which has norm
  $4 \ts \tilde{a}^2 - d$. Since
  ${\sqrt{d\ts}{\ts\ts}}^2 = d = d^{}_{\nts K}$ in this case, we have
  $(\sqrt{d\ts}\,) = \prod_{p \ts | \ts d^{}_{\nts K}} \fp$, where
  $(p) = \fp^2$ for each factor, which implies $\sqrt{d\ts} \in W\!$,
  hence also $A(\sqrt{d\ts}\,) \in W$ by Lemma~\ref{lem:W-to-W}.
  Moreover, the norm of $A(\sqrt{d\ts}\,)$ must be a divisor of
  $d=d^{}_{\nts K}$ by Fact~\ref{fact:square-free-divisor}.

  We now claim that $2 \ts n \ts \tilde{a} \pm \sqrt{d\ts} \in W$ for
  all $n\in \NN_0$. Indeed, since $\pm \sqrt{d\ts} \in W\!$, this is
  clear for $n=0$. Assuming the claim to hold for $n$, we have
  $x_n = 2 \ts n \ts \tilde{a} \pm \sqrt{d\ts} \in W\!$, and also its
  image must be in $W\!$, again by Lemma~\ref{lem:W-to-W}. But this
  means that
  $A(x_n) = 2\ts (n+1) \ts \tilde{a} \pm \sqrt{d\ts} \in W\!$, for one
  of the signs, and then actually for both of them, by an application
  of Fact~\ref{fact:Gal-W}. This settles the claim inductively.

  As a result, we see that $N (2\ts n \ts\tilde{a} + \sqrt{d\ts}\,)$
  divides $d$ for all $n\in\NN$, where the norm is
  $4 \ts n^2 \tilde{a}^2 - d$.  Since this is unbounded unless
  $\tilde{a}=0$, we may conclude that
  $A (\sqrt{d\ts}\,) = \pm \sqrt{d\ts}$. Consequently, when $b=1$, we
  get $a=0$ and $A=\id$, while $b=-1$ forces $a=1$, which gives
  $A(\delta) = \delta^{\ts\prime}$ with $(.)' \in \Gal(K/\QQ)$ being
  algebraic conjugation. This settles the proposition for Case
  $1$. \smallskip

\textit{Case} $2$: $d \equiv 2,3 \bmod 4$, so
  $\cO = \ZZ \oplus \ZZ \sqrt{d\ts}$. Let
  $A(\sqrt{d\ts}\,) = a+b \ts \sqrt{d\ts}$ with $a,b \in \ZZ$. Here,
  $A(1)=1$ and $A(\sqrt{d\ts}\,)$ generate $\cO$, so $b = \det \left(
    \begin{smallmatrix} 1 & a \\ 0 & b \end{smallmatrix} \right) =\pm
  1$, because $A$ is a bijection of $\cO$. In complete analogy to the
  first case, one shows by induction that
  $n \ts a + \sqrt{d\ts} \in W$ for all $n\in\NN_0$. Since
  $n \ts a + \sqrt{d}$ has norm $n^2 a^2 - d$, but divides
  $2 \, \lvert d \rvert$ by Fact~\ref{fact:square-free-divisor},
  a contradiction is only avoided if $a=0$, which gives
  $A (\sqrt{d\ts}\,) = \pm \sqrt{d\ts}$, and we are done.
\end{proof}
  
To continue, we shall need the following property of the splitting
primes.
  
\begin{fact}\label{fact:density}
   For any quadratic number field\/ $K$, there are infinitely many
  rational primes\/ $q$ that split in\/ $K$ in such a way that the
  prime ideals over\/ $q$ are principal.
\end{fact}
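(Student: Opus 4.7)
The plan is to invoke Hilbert class field theory together with Chebotarev's density theorem. Let $h = h(K)$ denote the class number of $K$ and let $H$ be the Hilbert class field of $K$, that is, the maximal unramified abelian extension of $K$. Class field theory gives a canonical isomorphism $\Gal(H/K)\simeq \mathrm{Cl}(K)$ via the Artin map, under which a prime ideal $\fp$ of $\cO^{}_{\nts K}$ (unramified in $H$, which is automatic) corresponds to its ideal class. In particular, $\fp$ splits completely in $H/K$ if and only if its Frobenius class is trivial, if and only if $\fp$ is a principal ideal of $\cO^{}_{\nts K}$.

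The first step is to note that $H/\QQ$ is itself Galois of degree $2h$. This follows because $K/\QQ$ is Galois, so any $\QQ$-embedding $\tau$ of $H$ into a fixed algebraic closure maps $H$ to an unramified abelian extension of $\tau(K)=K$, and by maximality $\tau(H)=H$. I would then apply the Chebotarev density theorem to the Galois extension $H/\QQ$: the set of rational primes that split completely in $H/\QQ$ has Dirichlet density $1/[H:\QQ] = 1/(2h) > 0$, and is therefore infinite.

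The second step is to verify that each such prime $q$ meets both requirements of the statement. If $q$ splits completely in $H/\QQ$, then in particular $q$ is unramified and splits completely in the intermediate field $K$, so $(q) = \fp \fp'$ with $\fp\ne\fp'$ and both primes of residue degree $1$ over $\QQ$. Moreover, since $q\cO^{}_H$ is a product of $2h$ distinct prime ideals of degree $1$, each of $\fp$ and $\fp'$ must split completely in $H/K$. By the characterisation recalled above, both $\fp$ and $\fp'$ are therefore principal, as required. (Note that it suffices to check principality of one of them, since $\fp' = \sigma(\fp)$ for the non-trivial $\sigma\in\Gal(K/\QQ)$, and $\sigma$ preserves the class of principal ideals.)

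No genuine obstacle arises here; the only point requiring a moment of care is the fact that $H/\QQ$ is Galois, which is essential in order to apply Chebotarev to $H/\QQ$ rather than to $H/K$ alone. One could alternatively prove the statement without the full Chebotarev theorem by exhibiting congruence conditions on $q$ modulo the conductor of the genus field and appealing to Dirichlet's theorem on primes in arithmetic progressions, but the Hilbert class field argument is both shorter and adequate for the qualitative statement of the fact.
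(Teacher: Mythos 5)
Your argument is correct. The paper itself disposes of this fact in one line by citing the generalised Dirichlet density theorem \cite[Thm.~VII.13.2]{N} --- every class of the ideal class group, in particular the principal class, contains infinitely many prime ideals of degree $1$ --- together with the remark that one may restrict to decomposed (degree-$1$) primes; since only finitely many rational primes ramify, infinitely many of the resulting degree-$1$ principal primes lie over split rational primes. You instead translate principality of $\fp$ into complete splitting of $\fp$ in the Hilbert class field $H/K$ via the Artin map, observe that $H/\QQ$ is Galois of degree $2h$ (your justification via maximality and $\tau(K)=K$ is the standard one and is sound), and apply Chebotarev to $H/\QQ$. The two routes are close cousins --- both are density statements from class field theory --- but they are genuinely distinct in what they invoke: the paper's version needs only the class-group form of Dirichlet's theorem and never constructs $H$, whereas yours requires the existence and Galois-over-$\QQ$ property of the Hilbert class field but in return packages both conditions (splitting in $K$ and principality above) into the single condition ``$q$ splits completely in $H/\QQ$'' and yields the explicit Dirichlet density $1/(2h)$ for the set of admissible rational primes. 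Your verification that complete splitting in $H/\QQ$ forces both primes of $K$ over $q$ to be principal is correct (and, as you note, Galois conjugacy over $\QQ$ would make checking one of them suffice). No gap.
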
 

\begin{proof}
    This is a consequence of Dirichlet's density theorem
    \cite[Thm.~VII.13.2]{N} as follows.  Let $M$ be a set of prime
    ideals of $K$.  As explained in \cite[p.~543]{N}, the Dirichlet
    density of $M$ is given by
\[
  d(M) \, = \, \lim_{s\searrow 1}
  \frac{\sum_{\fq \in M}N(\fq)^{-s}}{\log \frac{1}{s-1}} \ts .
\]
Let $S_{\mathrm{ram}}$ be the set of all rational primes which are
ramified in $K$, and let $S_{\mathrm{ram}}(K)$ be the set of primes
(meaning prime ideals) in $K$ above those in $S_{\mathrm{ram}}$.  We
recall that $N(\fq) = q^f$, where $q$ is the rational prime below
$\fq$ and $f$ is called the degree of $\fq$, which is equal to $2$ if
$q$ is inert in $K$ and equal to $1$ otherwise.  As the sum
$\sum N(\fq)^{-s}$ over all prime ideals in $M$ of degree $2$
converges and the set $S_{\mathrm{ram}}(K)$ is finite, the definition
of $d(M)$ only depends upon primes $\fq \not\in S_{\mathrm{ram}}(K)$
of degree $1$.  So if $M$ has positive density, there are infinitely
many primes $\fq \in M$ such that the rational prime $q$ below $\fq$
splits in $K$.
  
We now apply this observation to the set of all primes which are
principal, which has positive density by \cite[Thm.~VII.13.2]{N}.
Consequently, there are infinitely many rational primes $q$, which
split in $K$, such that one prime above $q$ is principal. But if one
prime above $q$ is principal, so is the other.  
\end{proof}

\begin{prop}\label{prop:not}
  Let\/ $d$ be a square-free element of\/ $\ZZ \setminus \{ 0,1 \}$
  and further assume that\/ $d \ne -1$. Let\/ $A$ be a\/ $\ZZ$-linear
  bijection of\/ $\cO$ such that\/ $A(W) = W\!$. Then,
  $A(1) \not\in \sqrt{d\ts} \ts \cO^{\times}\!$.  In particular, this
  conclusion holds for any $A\in \stab (V)$.
\end{prop}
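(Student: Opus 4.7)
The plan is to argue by contradiction, assuming $A(1) = \varepsilon \sqrt{d\ts}$ for some $\varepsilon \in \cO^\times$. First I would pre-compose with $m^{}_{\varepsilon^{-1}}$: this is a $\ZZ$-linear automorphism of $\cO$ that preserves every $\fp$-adic valuation, hence maps $W$ bijectively onto itself. So $m^{}_{\varepsilon^{-1}} \nts\circ A$ again satisfies the hypothesis $A(W) = W$ and sends $1$ to $\sqrt{d\ts}$. Thus without loss of generality $A(1) = \sqrt{d\ts}$.

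Next I would expand $A$ in the integral basis $\{1, \delta\}$ from \eqref{eq:basis} by writing $A(\delta) = a + b\ts\delta$ with $a, b \in \ZZ$. The determinant condition $\det A = \pm 1$ translates as follows. In Case~1 ($d \equiv 2, 3 \bmod 4$, so $\delta = \sqrt{d\ts}$ and the coordinates of $A(1)$ are $(0, 1)^{T}$) it forces $a = \pm 1$; in Case~2 ($d \equiv 1 \bmod 4$, where $A(1) = (-1, 2)^{T}$) it forces $b + 2a = \mp 1$, in particular $b$ odd.

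Following the template of Proposition~\ref{prop:unit-suffices}, I would then construct an infinite sequence in $W$ whose norms grow without bound, contradicting the fact that the norms of elements of $W$ are confined to the finite set of signed squarefree products of ramified rational primes (all divisors of $d^{}_{\nts K}$). Since $1 \in W$ and $A(W) = W$, every forward iterate $A^k(1)$ lies in $W$. In Case~1 one has $A^k(1) = c_k + e_k\sqrt{d\ts}$ with the pair $(c_k, e_k)$ governed by the companion matrix $\bigl(\begin{smallmatrix}0 & a \\ 1 & b\end{smallmatrix}\bigr)$, whose characteristic polynomial is $x^2 - b\ts x - a$. Whenever its roots are not roots of unity (which covers every $b \ne 0$ when $a = 1$, and every $|b| \geqslant 2$ when $a = -1$), the pair $(c_k, e_k)$ grows at least linearly in modulus, so $|N(A^k(1))| = |c_k^2 - d\ts e_k^2|$ eventually exceeds $|d^{}_{\nts K}|$, giving the contradiction. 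Case~2 is treated analogously using the matrix $\bigl(\begin{smallmatrix}-1 & a \\ 2 & b\end{smallmatrix}\bigr)$.

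The main obstacle is the short list of residual \emph{finite-order} subcases, namely $b = 0$ with $a = \pm 1$ in Case~1, together with the sixth-root-of-unity analogues $a = -1$, $b = \pm 1$ and the corresponding subcases in Case~2, where iteration of $A$ cycles through a bounded orbit and the recurrence argument stalls. For these I would invoke Fact~\ref{fact:density} to pick a splitting rational prime $q = \pi\ts\bar{\pi}$ whose factors are principal, compute $A(\pi\sqrt{d\ts})$ via $\ZZ$-linearity, and perform valuation bookkeeping at the ramified primes $\fp \mid d^{}_{\nts K}$ and at the split prime $(\pi)$ to extract the missing contradiction with the ramified-only structure of $W$ on related elements. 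The hypothesis $d \ne -1$ enters precisely here: for $d = -1$ the Gaussian unit $\ii$ legitimises the $b = 0$ involution, which is the content of Fact~\ref{fact:Gauss}; once $d \ne -1$, this genuine exception disappears.
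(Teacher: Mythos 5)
Your overall strategy is genuinely different from the paper's: where the paper pins down $A(\sqrt{d\ts}\,)$ in a single step (its norm $1-d\ts b^2$ must divide $d^{}_{K}$ resp.\ $2\ts d$ because $A(\sqrt{d\ts}\,)\in W$, which immediately forces $b\in\{0,\pm1\}$ and, for $b=\pm1$, forces $d\in\{2,3,-1\}$), you iterate $A$ on $1$ and try to contradict the boundedness of norms on $W$ along the orbit. Unfortunately there are two genuine gaps. First, the step ``$(c_k,e_k)$ grows in modulus, so $\lvert c_k^2-d\ts e_k^2\rvert$ eventually exceeds $\lvert d^{}_{K}\rvert$'' is a non sequitur for \emph{real} quadratic fields: the norm form is indefinite there, and e.g.\ the powers of a fundamental unit have coordinates growing exponentially while their norms stay equal to $\pm1$. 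The conclusion can be rescued, but only by checking that the expanding eigendirection of your companion matrix is never isotropic for the norm form --- for $d\equiv 2,3 \bmod 4$ the eigenvector is $(\lambda-b,1)$ and $(\lambda-b)^2=d$ forces $b=0$ and $a=d$, impossible since $a=\pm1$ and $d\neq\pm1$ --- and this verification, which is where the hypothesis $d\neq -1$ also does real work, is absent from your argument.

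Second, and more seriously, the residual finite-order cases are exactly where the substance of the proposition lies, and your plan for them (``valuation bookkeeping'' at ramified and split primes) is not a proof. For $b=0$ the paper's explicit splitting-prime computation ($A(\pi)=u\sqrt{d\ts}\pm v$, $N(A(\pi))=\pm q+(d+1)(v^2-u^2)$, hence $q\nmid N(A(\pi))$, hence $A(\pi)\in W$ and $\pi\in W$, contradiction) does go through and matches your sketch in spirit. But for the order-$3$/order-$6$ maps $a=-1$, $b=\pm1$ (and their analogues for $d\equiv 1\bmod 4$) you have $A(\sqrt{d\ts}\,)=-1\pm\sqrt{d\ts}$, and the analogous computation gives $N(A(\pi))\equiv v^2(1-d-d^2)\mp 2\ts d\ts u\ts v \pmod q$, whose non-vanishing requires an extra argument (essentially $(1-d-d^2)^2\neq 4\ts d^3$ together with a suitable choice of $q$) that you neither state nor prove. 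The paper avoids this entirely by exploiting $A(\sqrt{d\ts}\,)\in W$ and the resulting divisibility $N(A(\sqrt{d\ts}\,))\mid 2\ts d$, which kills $b=\pm1$ outright except for $d\in\{2,3\}$ (settled by appeal to Fact~\ref{fact:real-ex}) and $d=-1$ (excluded); your proposal never uses this constraint and never flags that $d=2,3$ need to be deferred to the earlier explicit results. As written, the proof is therefore incomplete precisely in the cases that require the most care.
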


\begin{proof}
  If $A$ is a PM for $V\!$, we have $A(W)=W$ by
  Lemma~\ref{lem:W-to-W}. So, let us more generally assume $A$ to be a
  $\ZZ$-linear bijection of $\cO$ such that $A(W)=W\!$, where we also
  know that $\sqrt{d\ts} \ts \cO^{\times}\! \subseteq W\!$.  Since
  $d\ne -1$ by assumption, $\sqrt{d\ts}$ is not a unit, and
  $\cO^{\times}\nts \ne \sqrt{d\ts} \ts \cO^{\times}$. Now, suppose to
  the contrary of our claim that
  $A(1) \in \sqrt{d\ts}\ts \cO^{\times}\!$, where we may then assume
  $A(1) = \sqrt{d\ts}$ without loss of generality, because multiplying
  $A$ by a unit does not change the type of mapping. Now, we have to
  consider two situations. \smallskip
  
\textit{Case} $1$: $d\equiv 1 \bmod 4$, where we set
  $\delta = \frac{1 + \sqrt{d}}{2}$ and then get
  $A(1) = -1 + 2 \ts \delta$.  Now, let $A(\delta) = a + b \ts \delta$
  with $a,b\in \ZZ$, wherefore $A$ is represented by the matrix
  $\left( \begin{smallmatrix} -1 & a \\ 2 &
      b \end{smallmatrix}\right)$, which must lie in $\GL (2,\ZZ)$ due
  to bijectivity of $A$ on $\cO=\ZZ[\ts\delta\ts ]$.  So, this gives
  $\pm 1 = \det (A) = -2\ts a - b$, and thus
  $A(\delta) = \pm \frac{1}{2} + \frac{b}{2} \ts \sqrt{d\ts}$.  With
  $\sqrt{d\ts} \in W\!$, we also get
  $A(\sqrt{d\ts}\,) = A(2\ts \delta - 1) = \pm 1 + (b-1) \ts
  \sqrt{d\ts} \in W\!$, which implies that its norm,
  $1-d\ts (b-1)^2 \eqdef d^{\ts\prime}$, divides $d^{}_{\nts K} = d$
  by Fact~\ref{fact:square-free-divisor}.
  
  Now, we claim that $d^{\ts\prime}=1$: Suppose to the contrary that
  $d^{\ts\prime}\ne 1$, hence
  $0 \ne 1 - d^{\ts\prime} = d\ts (b-1)^2$. If $d>0$, we have
  $d + d^{\ts\prime} \geqslant 0$ because
  $d^{\ts\prime} \ts\ts | \ts\ts d$, and this gives
  $1+d \geqslant 1-d^{\ts\prime} = d\ts (b-1)^2 \geqslant d$, where
  $d\ts (b-1)^2 \ne 1+d$. Then, $1-d^{\ts\prime} = d$, and $d$ and
  $d^{\ts\prime}$ are coprime.  Since
  $d^{\ts\prime} \ts\ts | \ts\ts d$, this forces $d^{\ts\prime}=\pm 1$
  and thus $d=0$ or $d=2$, which is impossible because
  $d\equiv 1 \bmod 4$. Likewise, if $d<0$, we get
  $d + d^{\ts\prime} \leqslant 0$ and then
  $1+d \leqslant 1 - d^{\ts\prime} =d\ts (b-1)^2 \leqslant d$, which
  is a contradiction. Consequently, we must indeed have
  $d^{\ts\prime}=1$ and hence $b=1$, so $A(\sqrt{d\ts}\,) = \pm 1$,
  where one also gets $a\in\{-1,0\}$.

  Next, choose a prime $q \gg 1$ according to Fact~\ref{fact:density}
  and let $\fp = (\pi) $ be a prime ideal over $q$, where
  $\pi \not\in W$ by construction. Write $\pi = u + v\ts \delta$ with
  $u,v \in \ZZ$, which has norm
\begin{equation}\label{eq:norm-pi}
      N(\pi) \, = \, u^2 + u v +  \myfrac{1-d}{4} \ts v^2
      \, = \, \pm q \ts ,
\end{equation}
where $\frac{1-d}{4}$ is an integer. Now, $q$ cannot divide
$u^2 + u\ts v$, as otherwise $q \gg 1$ implies
$q \ts\ts | \ts\ts v^2$, hence $q\ts\ts | \ts\ts v$ and thus also
$q\ts\ts | \ts\ts u$. But this would give $q^2 \ts\ts | \ts\ts q$,
which is impossible. So, $q$ and $u^2+uv$ are coprime.  Next, we have
$A(\pi) = u \sqrt{d\ts} + v \bigl( \pm \frac{1}{2} + \frac{1}{2} \ts
\sqrt{d\ts}\,)$ with norm
\[
      N(A(\pi)) \, = \, \myfrac{v^2}{4} - d \Bigl( u^2 + u \ts v +
      \myfrac{v^2}{4}\Bigr)  \,  = \,
      \pm \ts q - (d+1) (u^2 + u\ts v)  \ts ,
\]
where \eqref{eq:norm-pi} was used for the second step. Clearly, $q$
can be chosen sufficiently large so that it does not divide
$d+1$. Since $q$ is coprime with $u^2+uv$, we see that
$q \nts\nts \nmid \nts\nts N(A(\pi))$.  Since we know from
Lemma~\ref{lem:coprime} that $A(\pi)$ is coprime with any other
non-ramified prime, we must have $A(\pi) \in W = A(W)$.  As $A$ is
bijective on $W\!$, we get $\pi \in W$ and thus a contradiction, which
rules out $A(1) \in \sqrt{d\ts}\ts \cO^{\times}\!$ in this
case. \smallskip
  
\textit{Case} $2$: $d \equiv 2,3 \bmod 4$, where we consider
$A(1) = \sqrt{d\ts}$ and $A(\sqrt{d\ts}\,) = a + b \ts \sqrt{d\ts}$
with $a,b \in \ZZ$. Here, the determinant condition gives
$\pm 1 = \det \left( \begin{smallmatrix} 0 & a \\ 1 & b
  \end{smallmatrix}\right) = -a$, and thus $a=\pm 1$.
Consequently, $A(\sqrt{d\ts}\,) = \pm 1 + b \ts \sqrt{d\ts} \in
W$. Its norm is given by
$ N(A(\sqrt{d\ts}\,)) = 1 - d \ts b^2 \eqdef d^{\ts\prime}$, which
must divide $d$ (if $d\equiv 2 \bmod 4$) or $2\ts d$ (if
$d \equiv 3 \bmod 4$)  by Fact~\ref{fact:square-free-divisor}.
  
When $d<0$, so $d^{\ts\prime} > 0$, we have $b\in \{ 0, \pm 1 \}$, as
otherwise, due to $d^{\ts\prime}\ts\ts | \ts\ts 2\ts d$, the inequality
$1 + 2\ts d \leqslant 1 - d^{\ts\prime} = d\ts b^2 \leqslant 4\ts d$
gives a contradiction. Likewise, when $d>0$, so $d^{\ts\prime} < 0$,
we again get $b\in \{ 0, \pm 1\}$, as any other value would lead to
$4\ts d \leqslant d \ts b^2 = 1 - d^{\ts\prime} = 1 + \lvert
d^{\ts\prime} \rvert \leqslant 1 + 2 \ts d$ and thus to
$2 \ts d \leqslant 1$, which is impossible. \smallskip
  
\textit{Subcase} $b=0$: Here, we have $A(\sqrt{d\ts}\,) = \pm 1$.  By
Fact~\ref{fact:density}, we may choose a rational prime $q \gg 1$ such
that $(q) = \fp \ts \bar{\fp}$ with $\fp $ principal, so $\fp = (\pi)$
for some $\pi = u + v\ts \sqrt{d\ts}$. Then, we get
\[
  \pm \ts q \, = \, N(\pi) \, = \, u^2 - d\ts v^2
  \, = \,  u^2 - v^2 - (d-1) v^2 .
\]
Now, we must have $q \nts\nmid \nts (u^2 - v^2)$: Otherwise, $q \gg 1$
forces $q\ts\ts | \ts\ts v$, which implies $q \ts\ts | \ts\ts u$ and
then $q^2 \ts\ts | \ts\ts q$, and thus a contradiction. Next, we
calculate $A(\pi) = u \ts \sqrt{d\ts} \pm v$ and thus
\[
  N(A(\pi)) \, = \, v^2 - d \ts u^2 \, = \,
  \pm \ts q + (d+1) (v^2 - u^2) \ts .
\]
Since $q$ does not divide either of the two bracketed terms, we see
that $q \nts \nmid \nts N(A(\pi))$. As before, we conclude that
$A(\pi) \in W = A(W)$ and hence $\pi \in W$ by the bijectivity of $A$.
This contradicts the original choice of $\pi$, with the same
conclusion as in Case $1$. \smallskip

\textit{Subcase} $b=\pm 1$: Here, we have $d = 1-d^{\ts\prime}$, so
$d$ and $d^{\ts\prime}$ are coprime integers. Then,
$d^{\ts\prime} \ts\ts | \ts\ts 2 \ts d$ with $d>0$ means
$d^{\ts\prime} \in \{ -1, -2\}$, and thus $d=2$ or $d=3$, which are
two of the cases from Fact~\ref{fact:real-ex}.  In its proof,
  $A(W)=W$ was used to show $A(V) \subseteq V$, and our claim holds.
Likewise, when $d<0$, we can only have $d^{\ts\prime} \in \{ 1,2 \}$,
and thus $d=0$, which is excluded, or $d=-1$, which is the excluded
case of the Gaussian integers  (where the claim actually does not
  hold).
\end{proof}

At this point, recalling Fact~\ref{fact:Gauss} for $d=-1$, we can
completely answer the question for the stabiliser of $V$ in imaginary
quadratic fields as follows.

\begin{theorem}\label{thm:imag}
  Let\/ $K = \QQ(\sqrt{d\ts}\,)$, with\/ $d<0$ square-free, be an
  imaginary quadratic field, with ring of integers\/
  $\cO = \cO^{}_{\! K}$. Then, any\/ $A \in \stab(V)$ is of the form\/
  $A = m^{}_{\varepsilon}\nts \circ \sigma$ for some\/
  $\varepsilon \in \cO^{\times}\!$ and\/
  $\sigma \in \Gal (K/\QQ) \simeq C_2$. Every such mapping is
  bijective on\/ $V\!$, and we obtain that the stabiliser of\/ $V$ is
  a group,
\[  
  \stab (V) \, = \, \cO^{\times}\! \rtimes
  \Gal (K/\QQ) \, \simeq \, C_n \rtimes C_2
  \, = \, D_n \ts ,
\]
where\/ $D_n$ is the dihedral group, here with\/ $n=4$ for\/ $d=-1$,
$n=6$ for\/ $d=-3$, and\/ $n=2$ in all remaining cases.
\end{theorem}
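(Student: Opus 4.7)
The plan is to synthesise the results already developed in this section. I would first dispose of the cases $d = -1$ and $d = -3$ by invoking Facts~\ref{fact:Gauss} and~\ref{fact:Eisen}, which give $D_4$ and $D_6$ respectively. The remaining imaginary quadratic fields, namely $d < 0$ with $d \notin \{ -1, -3 \}$, have $\cO^{\times} \nts\nts = \{ \pm 1 \} \simeq C_2$ by Fact~\ref{fact:units}, so that the claim amounts to $\stab (V) \simeq D_2 = C_2 \times C_2$.

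For a given $A \in \stab (V)$, Lemma~\ref{lem:W-to-W} yields $A(W) = W$, and in particular $A(1) \in W$. The strategy is to combine this with Propositions~\ref{prop:unit-suffices} and~\ref{prop:not}, for which the key missing ingredient is an explicit description of $W\!$. I would extract the following auxiliary fact as a short lemma: for every imaginary quadratic field $K$ with $d \ne -1$, one has the disjoint decomposition $W = \cO^{\times} \nts \cup \sqrt{d\ts} \ts \cO^{\times}$. To establish it, observe that the principal squarefree products of ramified prime ideals correspond precisely to the kernel of the homomorphism $\phi \colon \FF_2^{\ts t} \to \mathrm{Cl}(K)$ sending each standard generator to the class of the corresponding ramified prime ideal, where $t$ denotes the number of ramified primes. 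By Gauss's theorem on ambiguous classes in imaginary quadratic fields, $\phi$ maps surjectively onto $\mathrm{Cl}(K)[2]$, which has order $2^{\ts t-1}$; hence $\ker \phi$ has order precisely $2$. A brief case distinction modulo $4$, combined with \eqref{eq:ram-prod} and the observation that $\fp^{}_{2}$ is non-principal whenever $d \equiv 3 \bmod 4$ and $d \ne -1$ (since $N(\alpha) = 2$ has no solution in $\ZZ[\sqrt{d\ts}\,]$ for $d < -1$), will identify the non-trivial element of $\ker\phi$ with the principal ideal $(\sqrt{d\ts}\,)$, as required.

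Once this is in place, the rest is immediate: Proposition~\ref{prop:not} excludes the coset $\sqrt{d\ts}\ts \cO^{\times}\!$, forcing $A(1) \in \cO^{\times}\!$, and Proposition~\ref{prop:unit-suffices} then yields $A = m^{}_{\varepsilon} \nts\circ \sigma$ for some $\varepsilon \in \cO^{\times}$ and $\sigma \in \Gal (K/\QQ)$. The converse inclusion is transparent, since multiplication by a unit and algebraic conjugation both preserve prime ideal factorisation and therefore map $V$ bijectively onto itself. The composition rule $(m^{}_{\varepsilon} \nts\circ \sigma) \circ (m^{}_{\varepsilon^{\prime}} \nts\circ \sigma^{\prime}) = m^{}_{\varepsilon \ts \sigma(\varepsilon^{\prime})} \circ (\sigma \sigma^{\prime})$ then exhibits $\stab (V)$ as the semidirect product $\cO^{\times} \rtimes \Gal (K/\QQ) \simeq C_n \rtimes C_2 = D_n$, with the value of $n$ read off from Fact~\ref{fact:units}.

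The main obstacle is the explicit identification of $W$ sketched above; everything else is essentially bookkeeping built on previous results. A self-contained alternative would be to extend the coprimality/norm argument in the proof of Proposition~\ref{prop:not} directly to arbitrary non-unit elements of $W\!$, but this would require more case work; the genus-theoretic route is both shorter and more conceptual.
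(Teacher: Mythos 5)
Your proposal is correct, but the decisive step is handled by a genuinely different argument than in the paper. Both routes reduce the theorem, via Lemma~\ref{lem:W-to-W} together with Propositions~\ref{prop:unit-suffices} and~\ref{prop:not}, to showing that $A(1)\in W$ forces $A(1)\in\cO^{\times}$; the difference is how the non-unit elements of $W$ outside $\sqrt{d\ts}\ts\cO^{\times}$ are excluded. The paper never computes $W$: it writes $A(1)$ in the integral basis, notes that its norm divides $d^{}_{\nts K}$ (or $2\ts d^{}_{\nts K}$) and is exactly divisible by some ramified prime, and then derives a contradiction from elementary inequalities on the norm form, in two cases according to $d\bmod 4$, for all $d\leqslant -5$ (the cases $d\in\{-1,-2,-3\}$ being quoted from Section~\ref{sec:initial}). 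You instead determine $W$ completely: the subsets $S$ of ramified primes with $\prod_{\fp\in S}\fp$ principal form $\ker\phi$ for $\phi\colon\FF_2^{\ts t}\to\mathrm{Cl}(K)$, and genus theory for imaginary quadratic fields (surjectivity of $\phi$ onto the $2$-torsion, of order $2^{\ts t-1}$) gives $\lvert\ker\phi\rvert=2$; since \eqref{eq:ram-prod} exhibits $(\sqrt{d\ts}\,)$ as a non-trivial kernel element whenever $d\ne -1$, one gets $W=\cO^{\times}\cup\sqrt{d\ts}\ts\cO^{\times}$ and Proposition~\ref{prop:not} finishes. I checked the genus-theoretic input and the resulting description of $W$; both are correct, and your route also absorbs $d=-2$ (indeed $d=-3$) into the general argument rather than treating it separately. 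What each approach buys: yours is shorter and more conceptual, but imports a non-elementary fact whose surjectivity part is special to the imaginary case (for real fields the norm of the fundamental unit obstructs it, which is presumably why the paper proceeds differently there via Proposition~\ref{prop:geo-2}); the paper's inequalities are longer but entirely self-contained. One minor remark: your observation that $\fp^{}_{2}$ is non-principal for $d\equiv 3\bmod 4$, $d\ne -1$ is superfluous, since once $\lvert\ker\phi\rvert=2$ and $(\sqrt{d\ts}\,)$ is known to give a non-trivial kernel element, it is automatically the only one. The remaining steps (bijectivity of $m^{}_{\varepsilon}\nts\circ\sigma$ on $V$ and the semidirect-product structure) agree with the paper.
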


\begin{proof}
  The cases $d\in \{ -1,-2,-3\}$ are known from
  Proposition~\ref{prop:min-two} and Facts~\ref{fact:Gauss} and
  \ref{fact:Eisen}, so we may restrict to $d \leqslant -5$, as $-4$ is
  not square-free.  Let $A$ be a PM for $V\!$, so we know that
  $A(W) = W$ from Lemma~\ref{lem:W-to-W}. In view of
  Proposition~\ref{prop:unit-suffices}, we now need to show
  $A(1) \in \cO^{\times}\!$.

  Suppose to the contrary that $A(1) \not\in \cO^{\times}\!$. Then,
  there exists a ramified prime $p$ such that $A(1) \in \fp$ where
  $\fp$ is the prime ideal over $p$, so $(p) = \fp^2$.  As $A(1)$ is
  square{\ts}-free, we know that $N(A(1))$ is exactly divisible by
  $p$, hence \emph{not} by $p^2$. \smallskip

  \textit{Case} $1$: $d \equiv 2,3 \bmod 4$, with
  $\cO = \ZZ [ \sqrt{d\ts}\,]$ and $d^{}_{\nts K} = 4 d$.  Let
  $A(1) = a + b \ts \sqrt{d\ts}$ with $a,b \in \ZZ$, so
  $N(A(1)) = a^2 - d \ts b^2 \eqdef d^{\ts\prime} \ts\ts | \ts\ts 2 d$
  because $A(1) \in W$.

  If we had $b= 0$, we would get
  $d^{\ts\prime} = a^2 \ts\ts | \ts\ts 2 d$.  But $d$ is
  square{\ts}-free and $A(1) \not\in \cO^{\times}\!$ by assumption,
  which implies $d^{\ts\prime} = 4$ and $a=\pm 2$.  Consequently,
  $A(1) = \pm 2 \in \fp^2_2$, which is not square{\ts}-free because
  $2$ is ramified. So, we see that $b\ne 0$, and we actually must have
  $b=\pm 1$ (otherwise,
  $d^{\ts\prime} = a^2 - d \ts b^2 \geqslant - d \ts b^2 \geqslant 4
  \ts\ts \lvert d \rvert > 2 \ts\ts \lvert d \rvert \geqslant
  d^{\ts\prime}$ would give a contradiction). So, with $b=\pm 1$, we
  get $d^{\ts\prime} = a^2 - d > 0$.

  Now, we see that $a\ne 0$, as otherwise $A(1) = \pm \sqrt{d\ts}$,
  which contradicts Proposition~\ref{prop:not}. Also, $d^{\ts\prime}$
  must be even (otherwise, $d^{\ts\prime} \ts\ts | \ts\ts d$ and
  $d^{\ts\prime} = a^2 - d > -d \geqslant d^{\ts\prime}$ gives a
  contradiction). So, write $d^{\ts\prime} = 2 \tilde{d}$, where
  $\tilde{d} \ts\ts | \ts\ts d$ with $\tilde{d} > 0$.  Since $d$ is
  square{\ts}-free, $\tilde{d}$ must divide $a$, and we get
\[
   2 \ts \tilde{d} \, = \, a^2 - \ts d \, \geqslant \,
   \tilde{d}^{\ts\ts 2} \nts + \ts \tilde{d} \, = \, 
   \bigl( \tilde{d} + 1 \bigr)  \tilde{d} \ts ,
\]
which implies $\tilde{d}=1$. But this means $2 = a^2 - d \geqslant 5$,
which is absurd, so $A(1) \not\in \cO^{\times}\!$ is ruled out in this
case. \smallskip

\textit{Case} $2$: $d\equiv 1 \bmod 4$, where $\cO = \ZZ[\delta]$ with
$\delta = \frac{1 + \sqrt{d\ts}}{2}$ and $d^{}_{\nts K} = d$. Let
$A(1) = a + b \ts \delta$ with $a,b\in \ZZ$.  Here, we get
$N(A(1)) = \bigl( a + \frac{b}{2}\bigr)^{\! 2} - d \bigl(
\frac{b}{2}\bigr)^2 \eqdef d^{\ts\prime}$, which divides $d$.

As before, $b=0$ is impossible, as it would imply
$a^2 \ts\ts | \ts\ts d$ with $d$ square{\ts}-free, so $a=\pm 1$ and
thus $A(1) \in \cO^{\times}\!$ in contradiction to our assumption. We
claim that, once again, we must have $b=\pm 1$: Otherwise, we would
have
$d^{\ts\prime} \geqslant - d \bigl( \frac{b}{2}\bigr)^{\! 2} \geqslant
- d$, hence $d^{\ts\prime} = -d$ and $b=\pm 2$ together with
$a=-\frac{b}{2}$. This, in turn, would give $A(1) = \pm \sqrt{d\ts}$,
in contradiction to Proposition~\ref{prop:not}.
  
So, we have
$0 \leqslant \bigl( a \pm \frac{1}{2}\bigr)^{\! 2} = d^{\ts\prime} +
\frac{d}{4}$, which implies
$\lvert d \rvert \geqslant d^{\ts\prime} \geqslant \frac{\lvert d
  \rvert}{4}$, hence $d^{\ts\prime} = \lvert d \rvert$ or
$d^{\ts\prime} = \frac{ \lvert d \rvert }{3}$ because $d$ is odd. In
the first case, we get
\[
  \Bigl( a \pm \myfrac{1}{2} \Bigr)^{\! 2} \, = \,
  \lvert d \rvert - \myfrac{\lvert d \rvert}{4} \, = \,
  \myfrac{3 \ts\ts \lvert d \rvert}{4} \ts ,
\]
which forces $3 \ts\ts \lvert d \rvert$ to be a square in $\ZZ$.  With
$d$ being square{\ts}-free, this is only possible for $d=-3$, which
was excluded. When $d^{\ts\prime} = \frac{\lvert d \rvert}{3}$, we
obtain
\[
  \Bigl( a \pm \myfrac{1}{2} \Bigr)^{\! 2} \, = \,
  \myfrac{\lvert d \rvert}{3} -
  \myfrac{\lvert d \rvert}{4} \, = \,
  \myfrac{\lvert d \rvert}{12} \ts ,
\]
and $\frac{\lvert d \rvert}{3}$ is a square in $\ZZ$, which again
leads to the excluded case $d=-3$. So, $A(1) \not\in \cO^{\times}\!$
is ruled out also in this case, and we have
$A(1) \in \cO^{\times}\!$.\smallskip

As each mapping of the form $A = m^{}_{\varepsilon}\nts \circ \sigma$
is in the stabiliser, the structure of $\stab (V)$ derives from the
unit group (as stated early in this chapter) together with
$\Gal (K/\QQ) \simeq C_2$ and the relation
$\sigma \circ m^{}_{\varepsilon} = m^{}_{\sigma(\varepsilon)} \nts
\circ \sigma$.
\end{proof}

Let us next attack the more complicated case of real quadratic fields,
where we begin with an observation that follows by elementary
arguments from our previous results.

\begin{prop}\label{prop:powers}
  Let\/ $d = p > 0$ be a rational prime, and consider\/
  $K = \QQ(\sqrt{d\ts}\,)$. Then, any $A \in\stab(V)$ is of the form\/
  $A = m^{}_{\varepsilon}\nts \circ \sigma$ with\/
  $\varepsilon \in \cO^{\times}\!$ and\/ $\sigma \in \Gal (K/\QQ)$.
\end{prop}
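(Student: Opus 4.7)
By Proposition~\ref{prop:unit-suffices} it suffices to show $\xi \defeq A(1) \in \cO^\times$. Lemma~\ref{lem:W-to-W} gives $\xi \in W$, and Proposition~\ref{prop:not} rules out $\xi \in \sqrt{p}\,\cO^\times$. With $d = p$ prime, the ramified rational primes of $K$ are determined by $d^{}_{\nts K}$: only $\{p\}$ when $p = 2$ or $p \equiv 1 \bmod 4$, and $\{2, p\}$ when $p \equiv 3 \bmod 4$. Since $\fp_p = (\sqrt{p})$ is always principal, in the first two situations $W = \cO^\times \cup \sqrt{p}\,\cO^\times$, and $\xi \in W \setminus \sqrt{p}\,\cO^\times$ forces $\xi \in \cO^\times$ at once. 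What remains is the case $p \equiv 3 \bmod 4$ odd.

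In this remaining case, the additional alternatives are $(\xi) \in \{\fp_2, \fp_2 \fp_p\}$ with $|N(\xi)| \in \{2, 2p\}$, which can only occur when $\fp_2$ is itself principal. The strategy I would adopt mirrors that of the proof of Proposition~\ref{prop:not}: write $\xi = a + b \sqrt{p}$ and $\eta \defeq A(\sqrt{p}\,) = c + e\sqrt{p}$, both in $W$ with $ae - bc = \pm 1$, and pick (via Fact~\ref{fact:density}) a large rational prime $q$ that splits with principal ideal $\fq = (\pi)$, $\pi = u + v \sqrt{p}$, $u^2 - p v^2 = \pm q$. Then $\pi \in V \setminus W$, and a direct expansion yields
\[
  N\bigl(A(\pi)\bigr) \,=\, u^2 N(\xi) + 2 u v \ts T + v^2 N(\eta) ,
\]
with $T \defeq ac - p b e$. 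Using $u^2 \equiv p v^2 \pmod q$ and hence $u \equiv s v \pmod q$ for some $s$ with $s^2 \equiv p \pmod q$, this reduces modulo $q$ to $v^2 (M + 2 s T)$, where $M \defeq p N(\xi) + N(\eta)$.

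The main algebraic obstacle is to show that the two fixed integers $M$ and $M^2 - 4 p T^2$ are both nonzero, after which $q \nmid v$ (established as in Proposition~\ref{prop:not}) would combine with $q \nmid M^2 - 4 p T^2$ to give $q \nmid N(A(\pi))$, since $q \mid v^2 (M + 2 s T)$ forces $q \mid M + 2 s T$ and hence $q \mid M^2 - 4 p T^2$. For $|N(\xi)| = 2 p$, one has $|p N(\xi)| = 2 p^2 > 2 p \geqslant |N(\eta)|$, so $M \neq 0$ trivially. For $|N(\xi)| = 2$, assuming $M = 0$ would give $N(\eta) = \mp 2 p$, while $\xi \bar\eta = T + (bc - ae) \sqrt{p} = T \mp \sqrt{p}$ has norm $T^2 - p$, which should equal $N(\xi) N(\eta) = -4p$, leading to $T^2 = -3 p < 0$, an absurdity. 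Likewise, $M^2 = 4 p T^2$ with $M \neq 0$ forces $T \neq 0$ and makes $p = (M/(2T))^2$ a rational square, contradicting primality. Navigating the four possibilities for $N(\eta)$ together with these constraints is where I expect most of the technical work.

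Given these non-vanishings, the non-divisibility $q \nmid N(A(\pi))$ holds for all sufficiently large $q$. Combining with Lemma~\ref{lem:coprime}, $A(\pi)$ is then coprime to every non-ramified rational prime, so $(A(\pi))$ is supported only on ramified primes, and square-freeness of $A(\pi) \in V$ places $A(\pi) \in W$. Since $A(W) = W$ and $A$ is a bijection of $\cO$, also $A^{-1}(W) = W$, so $\pi = A^{-1}(A(\pi)) \in W$, contradicting that $\fq = (\pi)$ is non-ramified. Hence neither $(\xi) = \fp_2$ nor $(\xi) = \fp_2 \fp_p$ is possible, and we conclude $\xi \in \cO^\times$, so $A = m^{}_{\varepsilon} \circ \sigma$ by Proposition~\ref{prop:unit-suffices}.
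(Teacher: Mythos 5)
Your proposal is correct, and for the two easy branches ($p=2$ and $p\equiv 1 \bmod 4$, where $W=\cO^\times\cup\sqrt{p}\,\cO^\times$ and Proposition~\ref{prop:not} finishes immediately) it coincides with the paper. In the remaining branch $p\equiv 3\bmod 4$ you take a genuinely different route. The paper first pins down $A(\sqrt{p}\,)$: it cannot lie in $\fp^{}_2$ because $A(1)\in\fp^{}_2$ would then force $A(\cO)\subseteq\fp^{}_2$, and it cannot be a unit $\epsilon$ because $A^{-1}\circ m^{}_{\epsilon}$ would violate Proposition~\ref{prop:not}; after normalising $A(\sqrt{p}\,)=\sqrt{p}$, the determinant condition gives $A(1)=\pm 1+b\sqrt{p}$ with $N(A(1))=1-p\ts b^2$, which is numerically incompatible with $\pm 2$ and $\pm 2p$ except when $p=3$, a case the paper then defers to Fact~\ref{fact:real-ex}. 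You instead leave both $\xi=A(1)$ and $\eta=A(\sqrt{p}\,)$ undetermined and rerun the splitting-prime mechanism of Proposition~\ref{prop:not} directly: the congruence $N(A(\pi))\equiv v^2(M+2sT)\bmod q$ reduces everything to the non-vanishing of the fixed integer $M^2-4pT^2$, which you verify cleanly (the $\xi\bar\eta$ norm computation giving $T^2=-3p$ is the right trick, and $M^2=4pT^2$ would make $p$ a rational square). Your version is computationally a bit heavier but more uniform: it needs neither the preliminary determination of $A(\sqrt{p}\,)$ nor the appeal to the special case $p=3$ from Fact~\ref{fact:real-ex}, and the "four possibilities for $N(\eta)$" you worry about are in fact already disposed of by your two non-vanishing claims, so little genuine work remains there.
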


\begin{proof}
  The case $d=2$, where $d^{}_{\nts K} = 8$, is covered by
  Fact~\ref{fact:real-ex}.  Next, consider the case
  $d^{}_{\nts K} \nts = d = p \equiv 1 \bmod 4$, where we know that
  $W = \cO^{\times}\ts\ts \dot{\cup}\, \sqrt{p} \, \cO^{\times}\!$ is
  a disjoint union. From Lemma~\ref{lem:W-to-W}, we get
  $A(1) \in W\!$, while Proposition~\ref{prop:not} asserts that
  $A(1) \not\in \sqrt{p}\, \cO^{\times}\!$, so
  $A(1) \in \cO^{\times}\!$. Then,
  Proposition~\ref{prop:unit-suffices} shows that $A$ is of the form
  claimed.

  Next, let $p\equiv 3 \bmod 4$, where $(2) = \fp^2_2$ is
  ramified. Consequently, we have
\[
     W \, = \, \cO^{\times} \ts \ts \dot{\cup} \, \sqrt{p}\, \cO^{\times}
     \ts\ts \dot{\cup} \, W'
\]
for some subset $W' \subseteq \fp^{}_{2}$. In view of
Proposition~\ref{prop:unit-suffices}, we now have to show that
$A(1) \in \cO^{\times}\!$.

Here, we know that $A(1) \in W\!$, but
$A(1) \not\in \sqrt{p}\, \cO^{\times}\!$ by
Proposition~\ref{prop:not}. Suppose we had \mbox{$A(1)\in W' \!$},
hence $A(1)\in \fp^{}_{2}$. Since $A(\cO) = \cO$, we must have
$A(\sqrt{p}\, ) \not\in \fp^{}_{2}$, while we still have the inclusion
$A(\sqrt{p}\, ) \in W$ by Lemma~\ref{lem:W-to-W}.  If
$A (\sqrt{p}\, ) = \epsilon \in \cO^{\times}\!$, we get
$\bigl( A^{-1} \nts\nts \circ m^{}_{\epsilon}\bigr) (1) = \sqrt{p}$,
in contradiction to Proposition~\ref{prop:not} applied to the mapping
$A^{-1} \nts \circ m^{}_{\epsilon}$, which clearly is a $\ZZ$-linear
bijection of $\cO$ that maps $W$ onto itself. Consequently, we have
$A(\sqrt{p}\, ) \in \sqrt{p} \, \cO^{\times}\!$.

Possibly after multiplying by a unit, which means no loss of
generality, we may assume $A(\sqrt{p}\, ) = \nts \sqrt{p}\,$.  Now,
let $A(1) = a + b \ts \sqrt{p}$ with $a,b\in\ZZ$. Since $A(1) \in W'$,
its norm must satisfy $N(A(1)) \in \{ \pm 2, \pm 2 \ts p
\}$. Moreover, we have
$ a = \det \left( \begin{smallmatrix} a & 0 \\ b &
    1 \end{smallmatrix}\right) = \det(A) = \pm 1$, so we get
$N(A(1)) = 1 - p \ts b^2$.  This never equals $2$ or $\pm 2 p$, while
it can agree with $- 2$ only for $p=3$. Since $p=3$ is known from
Fact~\ref{fact:real-ex}, we get $A(1) \in \cO^{\times}$ in all cases,
and we are done.
\end{proof}

The simplest cases not yet covered are $\QQ (\sqrt{6}\,)$ and
$\QQ (\sqrt{15}\,)$, which can be treated by explicit arguments
similar to those used for our previous cases from
Fact~\ref{fact:real-ex}, taking into account that one now has two
ramified primes (as in the case of $\ZZ[\sqrt{3}\,]$ treated in
\cite{BBHLN}). The result is the expected one, and completely in line
with the above. To generalise this now to all real quadratic fields,
we invoke another result on quadratic forms of a more geometric
origin. If $Q$ is  a given quadratic form, we (uniquely)
represent it by the corresponding symmetric matrix $B_Q$, and call
$\det (B_Q)$ the \emph{determinant} of $Q$.

\begin{lemma}\label{lem:quadratic-form}
  Let\/ $Q^{}_1$ and $Q^{}_2$ be two binary quadratic forms
  over\/ $\RR^2$ with negative determinant, and assume that they share
  a non-empty level curve, which is to say that, for some\/
  $c^{}_{1}, c^{}_{2} \in \RR$, the non-empty curves\/
  $\{ Q^{}_1 (x,y) = c^{}_{1} \}$ and\/ $\{ Q^{}_2 (x,y) = c^{}_{2}\}$
  agree. Then, there exists a constant\/ $0\ne c^* \in\RR$ such that\/
  $Q^{}_2 = c^* \, Q^{}_1$.
\end{lemma}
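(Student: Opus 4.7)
The plan is to reduce the statement to the fact already invoked in the proof of Lemma~\ref{lem:W-to-W}, namely that a conic in the projective plane is determined by five of its points. Homogenize by setting $\tilde{Q}^{}_i(x,y,z) \defeq Q^{}_i(x,y) - c^{}_i z^2$ for $i=1,2$; these are non-zero ternary quadratic forms, each defining a projective conic $C^{}_i \subset \mathbb{P}^2(\RR)$ whose affine slice at $z=1$ is exactly the prescribed level curve $\{Q^{}_i = c^{}_i\}$, which by hypothesis coincide as subsets of $\RR^2$.

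The next step is to show that this common level curve is infinite. Since $Q^{}_1$ has negative determinant, after a real linear change of coordinates it splits as a product $\ell^{}_1 \ts \ell^{}_2$ of two linearly independent real linear forms. Thus $\{Q^{}_1 = c^{}_1\}$ is either a non-degenerate hyperbola (when $c^{}_1 \neq 0$) or the pair of crossing lines $\{\ell^{}_1 = 0\} \cup \{\ell^{}_2 = 0\}$ (when $c^{}_1 = 0$); in either case it contains infinitely many real points. Consequently, $C^{}_1$ and $C^{}_2$ share infinitely many points, and since five points determine a conic, the two conics must coincide as projective varieties, i.e.\ $\tilde{Q}^{}_2 = c^* \ts \tilde{Q}^{}_1$ for some non-zero $c^* \in \RR$.

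Comparing the coefficients of $x^2, xy, y^2$ on both sides of $\tilde{Q}^{}_2 = c^* \ts \tilde{Q}^{}_1$ yields the desired identity $Q^{}_2 = c^* Q^{}_1$, while the coefficient of $z^2$ gives the compatibility $c^{}_2 = c^* c^{}_1$. The main point requiring care is the degenerate case $c^{}_1 = 0$: here the level curve passes through the origin, so equality of the two level sets forces $c^{}_2 = 0$ as well (otherwise $Q^{}_2(0,0) = 0 \neq c^{}_2$). In that situation the projective conic $C^{}_1$ is the reducible union of two lines, but this does not harm the argument, since the pair of asymptote lines already supplies the infinitely many shared points needed to force $\tilde{Q}^{}_2 = c^* \ts \tilde{Q}^{}_1$ (two reduced ternary quadratics vanishing on the same two distinct real lines through the origin are necessarily proportional).
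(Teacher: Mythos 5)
Your proof is correct, but it follows a genuinely different route from the paper's. The paper never homogenizes: it observes that the common level curve, being a hyperbola (or a pair of crossing lines when $c^{}_1=0$), determines its asymptotes, that these asymptotes are exactly the zero sets $\{Q^{}_i=0\}$, and then matches the two linear factors of $Q^{}_1$ and $Q^{}_2$ line by line to get $Q^{}_2=\kappa^{}_1\kappa^{}_2\,Q^{}_1$. You instead pass to the projective conics defined by $\tilde{Q}^{}_i = Q^{}_i - c^{}_i z^2$ and invoke the rigidity of conics (the same ``five points determine a conic'' fact the paper already uses in Lemma~\ref{lem:W-to-W}) to conclude $\tilde{Q}^{}_2=c^*\tilde{Q}^{}_1$, reading off $Q^{}_2=c^*Q^{}_1$ from the $x^2,xy,y^2$ coefficients. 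Your version buys uniformity (the degenerate and non-degenerate levels are handled by essentially the same mechanism, and you reuse a fact already present in the paper) and yields the extra compatibility $c^{}_2=c^*c^{}_1$ for free from the $z^2$ coefficient; the paper's version is more elementary and produces the constant explicitly as a product of the two line-matching scalars, at the cost of the slightly informal step that equal hyperbolas have equal asymptotes. One small point you leave implicit: in your main case you should note that $c^{}_1\neq 0$ forces $c^{}_2\neq 0$ (by the same origin argument you give for the converse), so that \emph{both} projective conics are non-degenerate before applying the rigidity statement; alternatively, an irreducible conic cannot meet a pair of lines in infinitely many points, so the conclusion holds regardless. This is a cosmetic gap, not a substantive one.
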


\begin{remark}
  Note that the assertion of Lemma~\ref{lem:quadratic-form} holds for
  any pair of conics, not necessarily hyperbolic, under the assumption
  that the common level curve is infinite.  \exend
\end{remark}

\begin{proof}
  Due to the determinant condition, the curves $\{ Q_i (x,y) = c \}$
  are hyperbolas, with the limiting (degenerate) case $c=0$ consisting
  of two straight lines each. If $\{ Q^{}_{1} (x,y) = c^{}_{1}\}$ and
  $\{ Q^{}_{2} (x,y) = c^{}_{2} \}$ agree as curves, they must have
  the same asymptotes, which are two straight lines passing through
  the origin. They are determined by the equation $Q_{i} (x,y) = 0$,
  for either choice of $i$, where we have a factorisation
\[
     Q^{}_i (x,y) \, = \, (a^{}_{i,1} x + b^{}_{i,1} y) 
        (a^{}_{i,2} x + b^{}_{i,2} y)
\]  
  for suitable real numbers $a^{}_{i,j}$ and $b^{}_{i,j}$ with
  $i,j \in \{ 1,2\}$.
  
  Now, possibly after interchanging the two factors for one of the
  forms, we may assume that $a^{}_{i,1} x + b^{}_{i,1} y =0$, for both
  $i$, is the equation for the first asymptote, which means that
  $a^{}_{i,2} x + b^{}_{i,2} y =0$ determines the other one. Observe
  that two non-degenerate linear equations determine the same line if
  and only if one equation is a constant (and non-zero) multiple of
  the other. In our case at hand, this means that there are constants
  $\kappa^{}_j \ne 0$ such that
  $a^{}_{2,j} x + b^{}_{2,j} y = \kappa^{}_{j} (a^{}_{1,j} x +
  b^{}_{1,j} y)$, hence
\[
    Q^{}_{2} (x,y) \, = \, \kappa^{}_{1} \ts \kappa^{}_{2} 
    \, Q^{}_{1} (x,y) \ts ,
\]  
which gives our claim by taking $c^* = \kappa^{}_{1} \ts \kappa^{}_{2}$.
\end{proof}

In the context of real quadratic fields,
$Q(a,b) = N( a + b \ts \delta)$ with $a,b \in \ZZ$ defines a quadratic
form over $\ZZ^2$, where $\delta$ is chosen as in
Eq.~\eqref{eq:basis}.  Clearly, $Q$ extends to a quadratic form over
$\RR^2$, and we have $\det (B_Q) = -\frac{1}{4} d^{}_{K} < 0$ in all
cases under consideration. This can now be used as follows.

\begin{prop}\label{prop:geo-2}
  Let\/ $K$ be a real quadratic field with\/ $\cO = \cO^{}_{\nts K}$
  as its ring of integers, and let\/ $W$ be the set of square-free
  elements of\/ $\cO$ that are coprime with all non-ramified rational
  primes. If $A$ is a\/ $\ZZ$-linear bijection of\/ $\cO$ with\/
  $A(W) = W\!$, one also has\/ $A(\cO^{\times}) = \cO^{\times}\!$.
\end{prop}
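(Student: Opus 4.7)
The plan is to show that $A$ preserves the norm form up to sign, i.e.\ that $\lvert N(A(z))\rvert = \lvert N(z)\rvert$ for every $z\in\cO$; this immediately yields $A(\cO^{\times}) = \cO^{\times}$, since the units are precisely the elements of norm $\pm 1$. I would fix the integral basis $\{1, \delta\}$ of $\cO$ from \eqref{eq:basis}, represent $A$ by a matrix in $\GL(2, \ZZ)$, and let $Q(a,b) \defeq N(a + b\ts\delta)$ be the resulting binary quadratic form on $\RR^2$. Since $K$ is \emph{real} quadratic, one has $\det (B_Q) = - d^{}_{\nts K}/4 < 0$, so Lemma~\ref{lem:quadratic-form} will be applicable once two matching level curves are exhibited.

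To produce those level curves, I would reuse the decomposition $W = \bigcup_{c\in C} S^{\ts\prime}_c$ from the proof of Lemma~\ref{lem:W-to-W}, with $C \subset \ZZ\setminus\{0\}$ finite and each $S^{\ts\prime}_c = \{x\in W : N(x)=c\}$ infinite; in the chosen coordinates, $S_c = \{(a,b)\in\ZZ^2 : Q(a,b) = c\}$ sits on the conic $\widehat{S}_c = \{Q = c\} \subset \RR^2$. Fixing any $c \in C$, the hypothesis $A(W) = W$ gives $A(S_c) \subseteq \bigcup_{c^{\ts\prime}\in C} S_{c^{\ts\prime}}$, and since $S_c$ is infinite while $C$ is finite, the pigeon hole principle supplies a $c^{\ts\prime}\in C$ for which $A(S_c) \cap S_{c^{\ts\prime}}$ is infinite. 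Then the two conics $\widehat{S}_c$ and $\{(x,y) : Q(A(x,y)) = c^{\ts\prime}\}$ share infinitely many points and both are non-degenerate (negative determinant, $c, c^{\ts\prime} \ne 0$), so they coincide — by the five-points-determine-a-conic argument already used in Lemma~\ref{lem:W-to-W}. Lemma~\ref{lem:quadratic-form} then delivers a constant $\lambda \in \RR^{\times}$ with $Q \circ A = \lambda\ts Q$ on all of $\RR^2$.

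Evaluating at $(1,0)$ gives $\lambda = N(A(1)) \in \ZZ$. Running the same argument for the $\ZZ$-linear bijection $A^{-1}$, which also satisfies $A^{-1}(W) = W$, yields $Q \circ A^{-1} = \mu\ts Q$ for some $\mu \in \ZZ$, and composing the two relations forces $\lambda\mu = 1$, hence $\lambda,\mu \in \{\pm 1\}$.  Consequently $\lvert N(A(z))\rvert = \lvert N(z)\rvert$ for every $z\in\cO$, so $A$ sends $\cO^{\times}$ into itself; applying the same to $A^{-1}$ gives the reverse inclusion, and hence $A(\cO^{\times}) = \cO^{\times}$. The step that really carries the proof is the conic identification in the second paragraph: it is essential that $K$ be real quadratic, so that $\det(B_Q) < 0$ and the level sets are honest hyperbolas — exactly the geometric setting under which Lemma~\ref{lem:quadratic-form} is formulated. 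Once $Q\circ A = \lambda\ts Q$ is in place, the integrality and invertibility of $A$ leave no room for $\lambda$ beyond $\pm 1$, and the characterisation of units by norm finishes everything cleanly.
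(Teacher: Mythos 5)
Your proof is correct and follows essentially the same route as the paper: the same decomposition of $W$ into level sets of the norm form, the same pigeon-hole and five-point conic identification, and the same appeal to Lemma~\ref{lem:quadratic-form} to obtain $Q\circ A = \lambda\ts Q$. The only (harmless) difference is the concluding step --- the paper deduces $c^*\in\{\pm 1\}$ from a power $A^k$ fixing $S^{}_{1}$, whereas you use the integrality of $\lambda$ and $\mu$ together with $\lambda\mu=1$; both work.
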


\begin{proof}
  As in the proof of Lemma~\ref{lem:W-to-W}, we can write $W$ as a
  finite disjoint union of level sets
   $S'_c = \{ a + b \ts \delta : a,b \in \ZZ \text{ and } Q (a,b) = c
  \}$ with the $\delta$ from \eqref{eq:basis}, where $c$ divides
  $2 \ts d$.  We identify $S_c'$ with a subset $S_c$ of $\ZZ^2$
  as in \eqref{eqn:S_c}. Then each
  $S_c$ is the intersection of the curve
  $\{ (x,y) \in \RR^2 : Q(x,y) = c\}$ with $\ZZ^2$, where we are in
  the situation of Lemma~\ref{lem:quadratic-form}.  In particular, the
  non-trivial level curves are hyperbolas.

  As before, we use $A$ both for the given mapping and for its
  $\GL(2,\ZZ)$-representation after the identification of $\cO$ with
  $\ZZ^2$ via the $\ZZ$-basis $\{ 1, \delta \ts \}$ of $\cO$. Since
  $A(W)=W$, we get
\[
  A (S^{}_{1}) \, =  \bigcup_{c \ts |  2 d}
  S_c \cap A(S^{}_{1}) \ts ,
\]
which is a finite union of disjoint sets. Consequently, for some
$c \ts\ts | \ts\ts 2 \ts d$, the set $S_c \cap A(S^{}_{1})$ must be
infinite. This implies that the hyperbolas $\{ Q(x,y) = c\}$ and
$\big\{ \bigl(Q\circ A^{-1}\bigr) (x,y) = 1 \big\}$ match in more than
five points and thus have to agree as curves, where the latter
corresponds to the image of $\{ Q(x,y) = 1\}$ under $A$. By
Lemma~\ref{lem:quadratic-form}, there must be a real number
$c^* \ne 0$ such that $Q\circ A^{-1} = c^* Q$.
  
Now, again inspecting the proof of Lemma~\ref{lem:W-to-W}, we know
that some power of $A$ maps units to units. In fact, from the above
argument, we know that $A^k (S^{}_{1}) = S^{}_{1}$ must hold for some
$k\in\NN$. Then, $A^{-k}$ maps $S^{}_{1}$ into itself and
$A^{-k} (1)$ has norm $1$. This implies
\[
     1 \, = \, Q \bigl( A^{-k} (1) \bigr) \, = \,
     (c^*)^{k}\ts Q(1) \, = \, (c^*)^{k} ,
\]
which means that $c^*$ is a root of unity, as $Q$ is
non-degenerate. But $c^* \in \RR$ by construction, so
$c^* \in \{ \pm 1 \}$, which implies that $A$ maps elements of norm
$\pm 1$ to elements of norm $\pm 1$, that is, units to units.
\end{proof}

We can now wrap up this part as follows.

\begin{theorem}\label{thm:real}
  Let\/ $K = \QQ (\sqrt{d\ts}\,)$, with\/ $d>1$ square-free, be a real
  quadratic field, with ring of integers\/ $\cO = \cO^{}_{\nts K}$.
  Then, any $A\in \stab (V)$ is of the form\/
  $A = m^{}_{\varepsilon}\nts \circ \sigma$ with\/
  $\varepsilon \in \cO^{\times}\!$ and\/
  $\sigma \in \Gal (K/\QQ) \simeq C_2$. Every such mapping is
  bijective on\/ $V\!$, and the stabiliser is a group,
\[
  \stab (V) \, = \, \cO^{\times} \! \rtimes \Gal (K/\QQ)
  \, \simeq \, (C_2 \times C_{\infty}) \rtimes C_2
  \, \simeq \, C_2 \times D_{\infty} \ts ,
\]
where\/ $D_{\infty} = C_{\infty}\nts\rtimes C_2$ is the infinite
dihedral group.
\end{theorem}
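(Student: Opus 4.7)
The plan is to combine the three main technical results developed in this section. Given any $A\in\stab(V)$, I would first invoke Lemma~\ref{lem:W-to-W} to upgrade the assumption $A(V)\subseteq V$ to $A(W)=W$. Next, applying Proposition~\ref{prop:geo-2} --- whose geometric input via Lemma~\ref{lem:quadratic-form} relies on the fact that the quadratic form $Q(a,b) = N(a+b\ts\delta)$ has negative determinant in the real case, so its non-trivial level sets are hyperbolas --- would strengthen this to $A(\cO^{\times}) = \cO^{\times}$. In particular, $A(1)\in\cO^{\times}$, so Proposition~\ref{prop:unit-suffices} will force $A = m^{}_{\varepsilon}\nts\circ\sigma$ for some $\varepsilon\in\cO^{\times}$ and $\sigma\in\Gal(K/\QQ)$.

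For the converse, I would observe that every map of the form $m^{}_{\varepsilon}\nts\circ\sigma$ is a $\ZZ$-linear bijection of $\cO$ that sends $V$ \emph{onto} $V\!$: multiplication by a unit preserves all prime-ideal valuations, while $(.)'\in\Gal(K/\QQ)$ permutes the prime ideals of $\cO$ and hence sends square{\ts}-free elements to square{\ts}-free elements. This would show simultaneously that $\stab(V)$ is a group (not merely a monoid) and that it equals $\{ m^{}_{\varepsilon}\nts\circ\sigma : \varepsilon\in\cO^{\times},\ \sigma\in\Gal(K/\QQ) \}$.

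For the final identification, the composition rule $\sigma\nts\circ m^{}_{\varepsilon} = m^{}_{\sigma(\varepsilon)}\nts\circ\sigma$ realises $\stab(V)$ as the semidirect product $\cO^{\times}\nts\rtimes\Gal(K/\QQ)$ with the natural Galois action. Dirichlet's unit theorem (Fact~\ref{fact:units}) gives $\cO^{\times}\simeq C_2\times C_{\infty}$, yielding the form $(C_2\times C_{\infty})\rtimes C_2$; a suitable choice of generators splitting off the central $\langle -1\rangle$ then produces the isomorphism with $C_2\times D_{\infty}$ claimed in the theorem. The hard part is already contained in Proposition~\ref{prop:geo-2} --- the passage from invariance of $W$ to invariance of the unit group, via the conic/pigeon-hole argument --- and once that input is available, the proof of the theorem reduces to assembling the pieces.
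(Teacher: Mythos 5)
Your assembly is precisely the paper's own proof: Lemma~\ref{lem:W-to-W} gives $A(W)=W$, Proposition~\ref{prop:geo-2} upgrades this to $A(\cO^{\times})=\cO^{\times}$, Proposition~\ref{prop:unit-suffices} then forces $A=m^{}_{\varepsilon}\nts\circ\sigma$, and the converse inclusion plus the composition rule $\sigma\circ m^{}_{\varepsilon}=m^{}_{\sigma(\varepsilon)}\circ\sigma$ give the group structure. So the route is the intended one, and you correctly identify Proposition~\ref{prop:geo-2} as the locus of the real work.

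There is, however, a genuine problem with your last step, the claim that ``a suitable choice of generators splitting off the central $\langle -1\rangle$'' yields $C_2\times D_{\infty}$. Write $\cO^{\times}=\{\pm 1\}\times\langle\varepsilon_0\rangle$ with $\varepsilon_0$ a fundamental unit. The Galois action is $\sigma(\varepsilon_0)=N(\varepsilon_0)\,\varepsilon_0^{-1}$, so the splitting you describe works exactly when $N(\varepsilon_0)=+1$. When $N(\varepsilon_0)=-1$ (e.g.\ $d=2,5,10,13,\dots$), every generator $\eta=\pm\varepsilon_0^{\pm1}$ of the free part satisfies $\sigma(\eta)=-\eta^{-1}$, and one computes $(\sigma\circ m^{}_{\varepsilon_0})^2=m^{}_{\sigma(\varepsilon_0)\ts\varepsilon_0}=m^{}_{N(\varepsilon_0)}=m^{}_{-1}$, so $\sigma\circ m^{}_{\varepsilon_0}$ has order four. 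Since every element of $C_2\times D_{\infty}$ has order $1$, $2$ or infinite, the group $\cO^{\times}\!\rtimes\Gal(K/\QQ)$ is \emph{not} isomorphic to $C_2\times D_{\infty}$ in that case; the central $\langle -1\rangle$ does not split off, and the extension of $D_{\infty}$ by $C_2$ is non-split. To be fair, the paper's proof glosses over the same point (``its calculation follows from the standard properties mentioned earlier'') and the isomorphism type asserted in the theorem itself is therefore also wrong for those $d$. The substantive content --- that $\stab(V)=\cO^{\times}\!\rtimes\Gal(K/\QQ)$ with the natural action, hence independent of $k$ --- is unaffected, but you should not assert the direct-product decomposition without first checking the sign of $N(\varepsilon_0)$.
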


\begin{proof}
  If $A$ is a PM for $V\!$, we once again know $A(W)=W$ from
  Lemma~\ref{lem:W-to-W}. Then, $A$ maps units to units by
  Proposition~\ref{prop:geo-2}, and $A$ is of the claimed form as a
  result of Proposition~\ref{prop:unit-suffices}, hence clearly a
  bijection on $V\!$.

The group property of $\stab (V)$ is then obvious, and its calculation
follows from the standard properties mentioned earlier, including
the structure of the unit group, $\cO^{\times}\!$.
\end{proof}

Given any quadratic field, we know from Lemma~\ref{lem:2-suffices}
that $\stab (V^{}_{k}) \subseteq \stab (V^{}_{2})$ holds for all
$k\geqslant 2$.  Now, $\stab (V^{}_{2})$ is a group, all elements of
which also preserve $V^{}_{k}$,  by Theorems \ref{thm:imag} and
  \ref{thm:real},
so
$\stab (V^{}_{k}) = \stab (V^{}_{2})$, and we obtain the following
conclusion.

\begin{coro}\label{coro:stab}
  Let\/ $K$ be a quadratic field, imaginary or real, with ring of
  integers\/ $\cO$. Then, for any\/ $k\in\NN$ with\/ $k\geqslant 2$,
  the set\/ $\stab (V^{}_{k})$ is a group that is independent of\/
  $k$, namely the group\/ $\stab (V)$ characterised by
  Theorems~\textnormal{\ref{thm:imag}} and
  \textnormal{\ref{thm:real}}.  \qed
\end{coro}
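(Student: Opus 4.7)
The plan is very short, since nearly everything is assembled: one inclusion is immediate from Lemma~\ref{lem:2-suffices}, and the other inclusion follows by checking that the concrete maps classified in Theorems~\ref{thm:imag} and~\ref{thm:real} actually preserve $V^{}_{k}$ for every $k\geqslant 2$, not just for $k=2$.

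In detail, I would first invoke Lemma~\ref{lem:2-suffices} to obtain the containment
\[
  \stab (V^{}_{k}) \, \subseteq \, \stab (V^{}_{2}) \, = \, \stab (V) \ts .
\]
For the reverse inclusion, take any $A\in\stab(V)$. By Theorem~\ref{thm:imag} (in the imaginary case) or Theorem~\ref{thm:real} (in the real case), $A$ has the form $A = m^{}_{\varepsilon}\nts\circ\sigma$ with $\varepsilon\in\cO^{\times}$ and $\sigma\in\Gal(K/\QQ)$. I would then observe that both factors act in a valuation-preserving way on prime ideals: the multiplication $m^{}_{\varepsilon}$ simply permutes associates and thus satisfies $v^{}_{\fp}(\varepsilon x) = v^{}_{\fp}(x)$ for every prime ideal $\fp$, while $\sigma$ induces a permutation of the prime ideals of $\cO$ with $v^{}_{\sigma(\fp)}(\sigma(x)) = v^{}_{\fp}(x)$. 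Consequently, for any $x\in V^{}_{k}$, all valuations of $A(x)$ are still strictly less than $k$, so $A(x) \in V^{}_{k}$; since $A$ is a bijection on $\cO$, applying the same argument to $A^{-1}$ (which is of the same form) yields $A(V^{}_{k}) = V^{}_{k}$, and hence $A\in\stab(V^{}_{k})$.

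Combining the two inclusions gives $\stab(V^{}_{k}) = \stab(V)$ for every $k\geqslant 2$, which is simultaneously a group (being equal to the group $\stab(V)$) and independent of $k$. There is no serious obstacle here: the whole content has been established beforehand, and the only thing to make explicit is that the two elementary operations $m^{}_{\varepsilon}$ and $\sigma$ respect the $\fp$-adic valuations, which is precisely why the characterisation of $\stab(V)$ upgrades without effort from square-free elements to $k$-free elements.
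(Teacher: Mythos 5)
Your proposal is correct and follows the same route as the paper: one inclusion from Lemma~\ref{lem:2-suffices}, and the reverse inclusion from the fact that every element of the group $\stab(V)$ classified in Theorems~\ref{thm:imag} and \ref{thm:real} has the form $m^{}_{\varepsilon}\circ\sigma$ and therefore preserves $V^{}_{k}$ for all $k$. The only difference is that you spell out the valuation-preservation argument (units have valuation zero, Galois elements permute primes) that the paper leaves implicit in the phrase ``all elements of which also preserve $V^{}_{k}$''.
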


\section{The subshift of \textit{k}-free integers}\label{sec:shift}

The connection to dynamical systems emerges from the observation that
$V \nts = V^{}_{\nts k}$ defines an element of
$\XX^{}_{\cO} \defeq \{ 0,1 \}^{\cO}$, which is compact in the product
topology, by identifying $V$ with the function (or configuration)
$1^{}_{V}$. As usual, we write elements of $\XX^{}_{\cO}$ as
$u = (u^{}_{x})^{}_{x\in\cO}$. Now, one can define an $\cO$-action
$\alpha \! : \, \cO \times \XX^{}_{\cO} \xrightarrow{\quad}
\XX^{}_{\cO}$ via $(t,u)\mapsto \alpha^{}_{t} (u)$ with
$(\alpha^{}_{t} (u))^{}_{x} \defeq u^{}_{x+t}$. This action is
continuous and turns $(\XX^{}_{\cO}, \cO)$ into a \emph{topological
  dynamical system} (TDS).

For a function $f \! : \, \XX^{}_{\cO} \longrightarrow \CC$, we define
the translation action via
$\bigl( \alpha^{}_{t} f \bigr) (u) = f (\alpha^{}_{-t} u )$. With
this, we get $\alpha^{}_{t} ( 1^{}_{V} ) = 1^{}_{t+V}$, where
$1^{}_{S}$ denotes the characteristic function of a set
$S \subseteq \ZZ^2$. Now, the orbit closure in the product topology,
\[
  \XX^{}_{V} \, \defeq \, \overline{ \{
    \alpha^{}_{t} (1^{}_{V}) : t \in \cO \} } \ts ,
\]
is a closed and hence compact subset of $\XX^{}_{\cO}$ that is
invariant under the above $\cO$-action, so $(\XX^{}_{V}, \cO)$ is a
TDS as well. At this point, it is useful to employ the Minkowski
embedding of $\cO$ as a lattice $\vG\subset \RR^2$, for which there
are several possibilities.

To establish the link with symbolic dynamics, it is more convenient to
 employ the standard $\ZZ$-basis $\{ 1, \delta \}$ of $\cO$
  with the $\delta$ from \eqref{eq:basis}
and then consider
\begin{equation}\label{eq:def-Vprime}
  V^{\prime}_{\nts k} \, \defeq \, \big\{
  (m^{}_{1}, m^{}_{2}) \in \ZZ^2 :
  m^{}_{1} + m^{}_{2} \delta \in V^{}_{\nts k} \big\} ,
\end{equation}
which is now a subset of $\ZZ^2$. This also identifies $\cO$ with
$\vG = \ZZ^2$ in a specific way. We are now working with binary
subshifts of $\{ 0,1 \}^{\ZZ^2}\!$, hence in the usual setting of
symbolic dynamics \cite{LM,Klaus}. In particular, for fixed $k$, we
now have
$\XX = \XX^{}_{V'_{k}} = \overline{ \{ t + V'_{k} : t \in \ZZ^2 \} }$,
where we tacitly identify subsets of $\ZZ^2$ with their characteristic
functions. This gives us the TDS $(\XX, \ZZ^2)$, which we call the
subshift induced by the $k$-free integers of $\cO$. It is unique up to
isomorphism.

Let $\iota \! : \, \cO \xrightarrow{\quad} \ZZ^2$ be the embedding
defined as above by $1\mapsto (1,0)$ and $\delta \mapsto (0,1)$. When
$\fb$ is an ideal in $\cO$, its embedding
$\vG_{\nts\fb} \defeq \iota (\fb)$ is a sublattice of $\ZZ^2$ of index
\[
    [\ZZ^2 : \vG_{\nts\fb}] \, = \, \No (\fb) \ts .
\]
Now, $\XX$ can also be seen as an algebraic $\cB$-free lattice system 
in the sense of \cite[Def.~5.1]{BBHLN}, where we assume
$k\geqslant 2$ to be fixed. Let $\fP$ denote the set of
prime ideals of $\cO$ and consider
\begin{equation}\label{eq:BB-def}
    \cB \, \defeq \, \{ \vG_{\nts\fp^k}: \fp \in \fP \}
    \quad \text{with }  \vG_{\nts\fp^k} = \iota (\fp^k) \subset \ZZ^2 .
\end{equation}
Here, $\cB$ is an infinite set of coprime sublattices of $\ZZ^2$,
which is to say that $\vG_{\nts\fp^k} + \vG_{\nts\fq^k} = \ZZ^2$
whenever the prime ideals $\fp$ and $\fq$ are different. The defining
set from \eqref{eq:def-Vprime} can now be rewritten as
\begin{equation}\label{eq:def-VB}
  V'_{k} \, = \, V^{}_{\cB} \, \defeq \,
  \ZZ^2 \setminus \bigcup_{\fp\in\fP} \vG_{\nts\fp^k} \ts .
\end{equation}
This gives another way to view our subshift as an orbit closure,
namely $\XX = \XX^{}_{\cB} = \overline{\ZZ^2 + V^{}_{\cB}}$, 
  where the latter is a shorthand for the translation orbit closure of
  $V^{}_{\cB}$ under the shift action.  In our special setting, where
$k\geqslant 2$, we also get
\[
  \sum_{\fp\in\fP} \myfrac{1}{[\ZZ^2 : \vG_{\nts\fp}]} \, =
  \sum_{\fp\in\fP} \myfrac{1}{\No (\fp)^k} \, < \, \infty \ts ,
\]
which is to say that the $\cB$-free system $(\XX^{}_{\cB}, \ZZ^2)$ 
is automatically Erd\H{o}s; compare \cite{BBHLN}.

\begin{lemma}\label{lem:density}
  Let\/ $K$ be a quadratic field, with ring of integers\/
  $\cO=\cO^{}_{\! K} = \ZZ[\delta]$, and let\/ $V'_{k}\subset \ZZ^2$
  with\/ $k\geqslant 2$ be the set defined in \eqref{eq:def-Vprime}.
  Then, the set\/ $V'_{k}$ has tied density\/
  $1/\zeta^{}_{\nts K} (k)$, where\/ $\zeta^{}_{\nts K}$ denotes the
  Dedekind zeta function of\/ $K$.
\end{lemma}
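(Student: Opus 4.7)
The plan is to reduce to the $\cB$-free formulation \eqref{eq:def-VB} and then compute the density by inclusion--exclusion combined with a standard tail estimate coming from the Erd\H{o}s condition. First, I would note that each $\vG_{\nts\fp^k} = \iota(\fp^k)$ is a sublattice of $\ZZ^2$ of index $\No(\fp^k) = \No(\fp)^k$, so its natural density in $\ZZ^2$ equals $\No(\fp)^{-k}$. More generally, for any finite set of distinct prime ideals $\fp^{}_{1}, \ldots, \fp^{}_{r} \in \fP$, the ideals $\fp^k_{1}, \ldots, \fp^k_{r}$ are pairwise coprime in $\cO$, so by the Chinese remainder theorem their intersection is $\fp^k_{1} \cdots \fp^k_{r}$ and hence
\[
   \vG_{\nts\fp^k_1} \cap \cdots \cap \vG_{\nts\fp^k_r} \, = \,
   \iota(\fp^k_{1} \cdots \fp^k_{r}) \ts ,
\]
which is a sublattice of $\ZZ^2$ of index $\prod_{i=1}^{r} \No(\fp^{}_{i})^k$ and thus of density $\prod_{i=1}^{r} \No(\fp^{}_{i})^{-k}$.

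Next, fix a finite subset $\fP^{}_{0} \subseteq \fP$ and consider the approximation $V^{(0)} \defeq \ZZ^2 \setminus \bigcup_{\fp \in \fP^{}_{0}} \vG_{\nts\fp^k}$. This set is a union of full residue classes modulo the sublattice $\iota\bigl(\prod_{\fp \in \fP^{}_{0}} \fp^k\bigr)$, so it has (uniform) natural density, and ordinary inclusion--exclusion gives
\[
   \dens\bigl(V^{(0)}\bigr) \, = \prod_{\fp \in \fP^{}_{0}}
   \Bigl( 1 - \myfrac{1}{\No(\fp)^k} \Bigr) \ts .
\]
Since $V'_{\nts k} \subseteq V^{(0)}$, the upper density of $V'_{\nts k}$ is at most this product. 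For the matching lower bound, the discrepancy $V^{(0)} \setminus V'_{\nts k}$ is contained in $\bigcup_{\fp \notin \fP^{}_{0}} \vG_{\nts\fp^k}$, whose upper density is at most $\sum_{\fp \notin \fP^{}_{0}} \No(\fp)^{-k}$ by subadditivity. As $\fP^{}_{0}$ exhausts $\fP$, this tail tends to $0$ because $\sum_{\fp} \No(\fp)^{-k} < \infty$ for $k \geqslant 2$, which is precisely the Erd\H{o}s condition noted after \eqref{eq:def-VB}. Consequently, the lower and upper densities of $V'_{\nts k}$ both equal $\lim_{\fP^{}_{0} \uparrow \fP} \prod_{\fp \in \fP^{}_{0}} (1 - \No(\fp)^{-k})$.

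Finally, I would identify the limiting product with $1/\zeta^{}_{\nts K}(k)$ via the classical Euler product
\[
   \zeta^{}_{\nts K}(s) \, = \prod_{\fp \in \fP}
    \Bigl( 1 - \myfrac{1}{\No(\fp)^s} \Bigr)^{\! -1} ,
\]
valid for $\Re(s) > 1$ and in particular for integer $s = k \geqslant 2$. This yields $\dens(V'_{\nts k}) = 1/\zeta^{}_{\nts K}(k)$, and the uniformity of the density across translates (the \emph{tied} part) follows from the same argument applied to translated centred averaging sequences, since $V^{(0)}$ is lattice-periodic and the tail estimate is independent of the averaging window.

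The main obstacle I anticipate is not the inclusion--exclusion itself but ensuring that the limit density is attained uniformly, i.e.\ that upper and lower densities truly coincide along any Van Hove sequence. This is handled by the Erd\H{o}s tail estimate above, which gives a quantitative bound that is translation-invariant; the periodicity of each finite approximation $V^{(0)}$ transfers this uniformity to $V'_{\nts k}$ in the limit.
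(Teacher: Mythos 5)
Your argument follows essentially the same route as the paper's: truncate to finitely many primes, compute the density of the resulting lattice-periodic approximation by inclusion--exclusion (via the Chinese remainder theorem), control the discrepancy by the convergent tail $\sum_{\fp} \No(\fp)^{-k}$, and identify the limit through the Euler product for $\zeta^{}_{\nts K}$. The only step that needs more care is the assertion that the upper density of the \emph{infinite} union $\bigcup_{\fp \notin \fP_0} \vG_{\nts\fp^k}$ is bounded by $\sum_{\fp \notin \fP_0} \No(\fp)^{-k}$ ``by subadditivity'': upper density is only finitely subadditive (a countable union of sets of density zero can be all of $\ZZ^2$), so this does not follow formally. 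It is rescued by the observation that a nonzero $x \in \fp^k$ satisfies $\No(\fp)^k \mid N(x)$, whence $\max(\lvert x \rvert, \lvert x' \rvert) \geqslant \No(\fp)^{k/2}$ and $\iota(x)$ lies outside a centred ball of radius $c \ts \No(\fp)^{k/2}$ for a constant $c$ depending only on $K$; hence within a ball of radius $r$ only the finitely many primes with $\No(\fp) \ll r^{2/k}$ contribute anything beyond the origin, and a standard lattice-point count then gives the stated tail bound up to $o(r^2)$ errors, uniformly in the centre. This is precisely the detail the paper outsources to \cite{BMP}; with it supplied, your proof is complete.
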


\begin{proof}
  Tied density means that one considers
  $\card \bigl( V'_{k} \cap B_r (0)\bigr)/ \pi r^2$ in the limit
  $r \to \infty$, or with disks replaced by centred squares and their
  areas, which is known to exist and to be independent of the
  averaging sequence, once it is centred and of van Hove type. In
  \cite{BMP}, the arguments are spelled out in detail for the visible
  lattice points of $\ZZ^2$, and the same approach works here as well.
  
  Indeed, looking at \eqref{eq:def-VB}, it is clear that $V'_k$ is a
  point set that is the limit, in the local topology, of a nested
  sequence of lattices by considering
  $\ZZ^2 \setminus \bigcup_{\fp: \No (\fp) \leqslant n}
  \vG_{\nts\fp^k}$ as $n\to\infty$. Clearly, the density of each
  lattice in this sequence exists with respect to the averaging
  sequence and, via a standard inclusion-exclusion argument, is given
  by
\[
     \prod_{\fp : \No (\fp) \leqslant n} \bigl( 1 - \No (\fp)^{-k} \bigr) ,
\]  
  which decreases in $n$ and converges to $1/\zeta^{}_{\nts K} (k)$ 
  as claimed.
\end{proof}

Also, the set $V'_{k}$ is a weak model set of maximal density in the
sense of \cite{BHS}. Indeed, with $k$ fixed, we can set
$H_{\fp^k} = \ZZ^2 / \vG_{\nts\fp^k}$ for each prime ideal $\fp$ in
$\cO$, which defines an Abelian group of order $\No (\fp)^k$. Then,
$H \defeq \prod_{\fp\in\fP} H_{\fp^k}$ is a compact Abelian group,
which serves as internal space for the
 \emph{cut and project scheme} (CPS) 
\begin{equation}\label{eq:CPS}
\renewcommand{\arraystretch}{1.2}\begin{array}{r@{}ccccc@{}l}
   \\  & \RR^2 & \xleftarrow{\;\;\; \pi \;\;\; } 
   & \RR^2 \nts\nts \times \nts\nts H & 
   \xrightarrow{\;\: \pi^{}_{\text{int}} \;\: } & H & \\
   & \cup & & \cup & & \cup & \hspace*{-1.5ex} 
   \raisebox{1pt}{\text{\footnotesize dense}} \\
   & \ZZ^2 & \xleftarrow{\;\ts 1-1 \;\ts } &
     \cL  & \xrightarrow{ \qquad } &\pi^{}_{\text{int}} (\cL) & \\
   & \| & & & & \| & \\
   & L & \multicolumn{3}{c}{\xrightarrow{\qquad\quad\quad
    \,\,\,\star\!\! \qquad\quad\qquad}} 
   &  {L_{}}^{\star\nts}  & \\ \\
\end{array}\renewcommand{\arraystretch}{1}
\end{equation}
where $\cL$ is the standard diagonal embedding of $\ZZ^2$
into $\RR^2 \times H$;  see \cite{TAO} for background.

Now, defining the $\star$-image of $x\in\ZZ^2$ as the lift of $x$ into
$H$ by using its value modulo $\vG_{\nts\fp^k}$ at place $\fp^k$, one
obtains
\[
  V'_{k} \, = \, \bigl\{ x \in \ZZ^2 : x \bmod \vG^{}_{\!\fp^k}
  \ne 0 \text{ for all } \fp \in \fP \bigr\} .
\]
In other words, the subset
$W = \{ (h_{\fp^k})^{}_{\fp \in \fP} : h_{\fp^k} \ne 0 \text{ for all
} \fp\in\fP \}$ provides a coding for $V'_{k}$, in the sense that $W$
  is the window for the description of $V'_{k}$ as a model set in the
  CPS \eqref{eq:CPS}. In particular, $V'_{k}$ and many of its
  properties can be retrieved from $W$ via the CPS.  In the natural
(and normalised) Haar measure of $H$, this set has volume
\[
  \vol (W) \, = \,
  \prod_{\fp\in\fP} \frac{\No (\fp)^k - 1}{\No (\fp)^k}
  \, = \, \prod_{\fp\in\fP} \bigl( 1 - \No (\fp)^{-k}\bigr)
  \, = \, \frac{1}{\zeta^{}_{\nts K} (k)} \ts ,
\]
where $\zeta^{}_{\nts K}$ denotes the Dedekind zeta function of our
quadratic field under consideration. Note that this is nothing but a
variant of the argument used in the proof of Lemma~\ref{lem:density}.

In this setting, we get the reformulation of $V'_{k}$ as
\[
    V'_{k} \, = \, \{ x \in \ZZ^2 : x^{\star} \in W \ts \} \ts ,
\]
which means that we have recognised it as a \emph{weak model set}. The
term `weak' here emerges from the observation that the set $W$, which
is also known as the \emph{window}, is a compact subset of $H$ that
has no interior. It thus consists of boundary only, and the latter has
Haar measure $1/\zeta^{}_{\nts K} (k)$ as derived above.  Now, by the
density formula for weak model sets \cite[Prop.~3.4]{HR}, the density
of our set is bounded from above by
$\dens (\ZZ^2) \ts \vol (W) = \vol (W)$, which agrees with the density
of $V'_{k}$ by Lemma~\ref{lem:density}.

Next, we call a set $U\subset \ZZ^2$ \emph{admissible} for $\cB$ from
\eqref{eq:BB-def} if, for every $\vG_{\nts\fp} \in \cB$, the set $U$
meets at most $\No (\fp)^k - 1$ cosets of $\vG_{\nts\fp}$ in $\ZZ^2$,
that is, misses at least one. The collection of all admissible subsets
of $\ZZ^2$ constitutes again a subshift, denoted by $\AAA$, which
clearly contains $\XX^{}_{\cB}$ by construction.

\begin{prop}\label{prop:admissible}
  Let\/ $K$ be a quadratic field, with ring of integers\/
  $\cO = \cO^{}_{\! K}$ and\/ $k$-free elements\/ $V^{}_{k}$ for some
  fixed\/ $k \geqslant 2$. If\/ $\XX^{}_{\cB}$ is the\/ $\cB$-free
  shift induced by\/ $V^{}_{k}$, and\/ $\AAA$ the corresponding shift
  of admissible sets, one has\/ $\XX^{}_{\cB} = \AAA$. In particular,
  $\XX^{}_{\cB}$ is hereditary: Arbitrary subsets of elements of\/
  $\XX^{}_{\cB}$ are again elements of\/ $\XX^{}_{\cB}$.
\end{prop}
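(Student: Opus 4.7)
The inclusion $\XX^{}_{\cB} \subseteq \AAA$ is already noted in the setup, since $V^{}_{\cB}$ omits the zero coset of every $\vG_{\nts\fp^k}$ and admissibility is a closed property in the product topology. The real work lies in proving $\AAA \subseteq \XX^{}_{\cB}$ together with hereditariness, which my plan handles in tandem.

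Given $U \in \AAA$, I would first construct a canonical \emph{maximal} admissible extension $U^{\ast} \supseteq U$ that visibly lies in $\XX^{}_{\cB}$. Admissibility furnishes, for each prime ideal $\fp \in \fP$, a coset $c^{}_{\fp}$ of $\vG_{\nts\fp^k}$ that $U$ misses; set
\[
  U^{\ast} \, \defeq \, \ZZ^2 \setminus \bigcup_{\fp \in \fP}
  \bigl( c^{}_{\fp} + \vG_{\nts\fp^k} \bigr) .
\]
Because distinct prime ideals yield coprime sublattices, so that $\vG_{\nts\fp^k} + \vG_{\nts\fq^k} = \ZZ^2$ for $\fp \neq \fq$, the Chinese remainder theorem provides, for each $N \in \NN$, an integer vector $t^{}_{N} \in \ZZ^2$ with $t^{}_{N} \equiv c^{}_{\fp} \pmod{\vG_{\nts\fp^k}}$ for every $\fp$ with $\No(\fp) \leqslant N$. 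The translates $t^{}_{N} + V^{}_{\cB}$ accumulate to $U^{\ast}$ in the product topology: on any prescribed finite window $F$, the small primes eventually dictate the configuration, and any $\vG_{\nts\fp^k}$ with $\No(\fp)$ beyond a threshold depending on $F$ and $t^{}_{N}$ meets $F - t^{}_{N}$ in at most one point, which can be handled by further CRT refinement. Hence $U^{\ast} \in \XX^{}_{\cB}$.

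Next I would establish that $\XX^{}_{\cB}$ is hereditary. Given $X \in \XX^{}_{\cB}$, a subset $Y \subseteq X$, and a finite window $F$, approximate $X$ on $F$ by some translate $t + V^{}_{\cB}$; then, for each $x \in (X \setminus Y) \cap F$, select a fresh prime $\fp^{}_{x}$ of very large norm, not used in any previous CRT constraint and with $\vG_{\nts\fp^k_{x}}$ meeting $F - t$ only at $x - t$. Imposing the additional CRT condition $t \equiv x \pmod{\vG_{\nts\fp^k_{x}}}$ knocks out $x$ from $t + V^{}_{\cB}$ without disturbing any point of $Y \cap F$. Because $\fP$ is infinite with $\sum_{\fp} \No(\fp)^{-k} < \infty$, enough large primes remain available to perform these knock-outs for the finitely many points of $(X \setminus Y) \cap F$ simultaneously. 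Combined with $U \subseteq U^{\ast} \in \XX^{}_{\cB}$, hereditariness then gives $U \in \XX^{}_{\cB}$, and therefore $\AAA \subseteq \XX^{}_{\cB}$.

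The chief obstacle is controlling the interplay between the finitely many CRT constraints at small primes and the infinite tail of large primes: one must verify that the tail never contributes unwanted elements in the limit, and that sufficiently sparse fresh primes remain available for the hereditariness construction. Both difficulties are resolved by exploiting the unbounded supply of prime ideals in $\cO^{}_{\nts K}$ together with the sparsity granted by the Erd\H{o}s condition; beyond CRT and a careful counting argument, nothing deeper is needed.
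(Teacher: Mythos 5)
Your overall strategy is the standard one for Erd\H{o}s $\cB$-free systems --- the paper itself simply outsources it to \cite[Prop.~5.2]{BBHLN} together with the remark that the system is Erd\H{o}s --- but your write-up has a genuine gap at the one step that carries all the weight. To place $U^{\ast}$ (or any admissible set) in the orbit closure you must, for each finite window $F$, produce a \emph{single} translate $t$ such that $(t+V^{}_{\cB})\cap F$ agrees with the target on $F$. The congruences you can impose via the Chinese remainder theorem are necessarily finite in number: finitely many small primes (where you match the missed cosets $c^{}_{\fp}$) plus finitely many fresh large primes (to knock out the points of $F$ that must be absent). What remains is the infinite tail: any of the infinitely many remaining primes $\fq$ could still satisfy $t\equiv x \pmod{\vG_{\nts\fq^k}}$ for some $x\in F$ that is supposed to survive, spoiling the window. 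No amount of ``further CRT refinement'' can address this, because one cannot prescribe $t$ modulo infinitely many pairwise coprime sublattices. The correct mechanism is a first-moment estimate: within the coset of translates cut out by your finitely many congruences, the proportion spoiled by some tail prime is at most $\card(F)\sum_{\No(\fq)>N}\No(\fq)^{-k}$, which the Erd\H{o}s condition makes smaller than $1$ for $N$ large, so a good $t$ exists. You allude to ``a careful counting argument'' only in your closing paragraph, but this estimate \emph{is} the content of the proposition and must be carried out, not named. The same gap recurs in your hereditariness step, where ``imposing the additional CRT condition'' on an already-chosen integer vector $t$ is not an available operation: you must restart the selection of $t$ with the enlarged congruence system and re-run the tail estimate.

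Structurally, your route is also more roundabout than necessary. Once $\AAA\subseteq\XX^{}_{\cB}$ is proved for arbitrary admissible sets, hereditariness is immediate --- a subset of an admissible set is admissible, which is the one-line observation the paper makes --- so there is no need to prove hereditariness first and funnel $U$ through its maximal admissible extension $U^{\ast}$. Moreover, proving $U^{\ast}\in\XX^{}_{\cB}$ is not easier than proving $U\in\XX^{}_{\cB}$ directly: the cosets $c^{}_{\fq}+\vG_{\nts\fq^k}$ attached to large primes can pass through $F$ and delete points of $F$ from $U^{\ast}$, so matching $U^{\ast}$ on $F$ already requires the full apparatus of fresh primes plus the tail estimate. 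I would drop $U^{\ast}$, prove $\AAA\subseteq\XX^{}_{\cB}$ directly by the selection argument sketched above, and read off hereditariness as a corollary.
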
 

\begin{proof}
  While the relation $\XX^{}_{\cB} \subseteq \AAA$ is clear by
  construction, the converse is the non-trivial part of the statement.
  If follows via \cite[Prop.~5.2]{BBHLN}, which rests on an asymptotic
  density argument that holds because our system is Erd\H{o}s.  
    As such, the claim is a special case of \cite[Thm.~5.3]{BBHLN}.
   
   Since subsets of admissible sets clearly remain admissible,
   $\XX^{}_{\cB}$ is hereditary. 
\end{proof}

The action of $\cG\defeq \ZZ^2$ on $\XX = \XX^{}_{V'_k}$ is faithful. 
 We now  consider the groups
\[
\begin{split}    
  \cS (\XX) \, & \defeq \, \cent^{}_{\Aut (\XX)} (\cG) \, = \,
  \{ H \in \Aut (\XX) : GH = HG \text{ for all } G \in \cG \}
    \quad\text{and} \\
  \cR (\XX) \, & \defeq \, \norm^{}_{\Aut (\XX)} (\cG) \, = \,
  \{ H\in\Aut (\XX) : H\cG H^{-1} = \cG \}
  \ts ,
\end{split} 
\] 
where $\Aut (\XX)$ refers to the group of all homeomorphisms
of $\XX$. These two groups are also known as the (topological)
\emph{centraliser} and \emph{normaliser}, respectively.

\begin{prop}\label{prop:trivial}
  Let\/ $k\geqslant 2$ be fixed, and let\/ $(\XX^{}_{\cB}, \ZZ^2)$ be
  the\/ $\cB$-free system from
  Proposition~\textnormal{\ref{prop:admissible}}. Then, the
  centraliser is the trivial one, $\cS (\XX^{}_{\cB}) = \cG$, and the
  normaliser is of the form\/
  $\cR (\XX^{}_{\cB}) = \cS (\XX^{}_{\cB}) \rtimes \cH$, where\/ $\cH$
  is isomorphic with a non-trivial subgroup of\/ $\GL(2,\ZZ)$.
\end{prop}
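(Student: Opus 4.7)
The plan is to handle the two assertions in sequence, establishing centraliser triviality first and then bootstrapping to the normaliser via the standard short exact sequence relating them.

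For the centraliser, the key input is that $V^{}_{\cB}$ is a weak model set of maximal density. By the density formula \cite[Prop.~3.4]{HR} together with Lemma~\ref{lem:density}, every configuration in $\XX^{}_{\cB}$ has upper density at most $1/\zeta^{}_{\nts K} (k)$, and one verifies using hereditariness (Proposition~\ref{prop:admissible}) that this bound is attained precisely on the $\ZZ^2$-orbit of $V^{}_{\cB}$: proper subsets of translates are pushed to strictly smaller density by the hereditary structure, while other orbits miss maximal density by the weak model set upper bound. Given $\phi \in \cS (\XX^{}_{\cB})$, the homeomorphism $\phi$ commutes with every $\alpha^{}_{t}$ and therefore preserves this distinguished orbit; hence $\phi (V^{}_{\cB}) = t + V^{}_{\cB}$ for some $t \in \ZZ^2$, and $\phi$ agrees with $\alpha^{}_{t}$ on the dense orbit of $V^{}_{\cB}$. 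Continuity then forces $\phi = \alpha^{}_{t}$, giving $\cS (\XX^{}_{\cB}) = \cG$.

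For the normaliser, a standard argument shows that conjugation by $\psi \in \cR (\XX^{}_{\cB})$ induces an automorphism of the translation group $\cG = \ZZ^2$, yielding a homomorphism $\Phi \! : \, \cR (\XX^{}_{\cB}) \xrightarrow{\quad} \GL (2,\ZZ)$ whose kernel is precisely $\cS (\XX^{}_{\cB})$. Set $\cH \defeq \Phi \bigl( \cR (\XX^{}_{\cB})\bigr)$. To produce explicit elements outside $\cS (\XX^{}_{\cB})$, we transport any $A \in \stab (V)$ via the identification $\iota \! : \, \cO \to \ZZ^2$ to a matrix in $\GL (2,\ZZ)$, which acts on $\XX^{}_{\cO}$ by $(Au)^{}_{x} \defeq u^{}_{A^{-1}\nts x}$. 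Since $A$ bijectively permutes $V^{}_{k}$, this action preserves $V'_{k}$ and hence also $\XX^{}_{\cB}$; the identity $A \circ \alpha^{}_{t} \circ A^{-1} = \alpha^{}_{A t}$ then places $A$ into $\cR (\XX^{}_{\cB})$ with $\Phi (A)$ given by the matrix of $A$. Because algebraic conjugation always belongs to $\stab (V)$ and represents a non-identity element of $\GL(2,\ZZ)$, the image $\cH$ is non-trivial, and the assignment $A \mapsto A$ provides a group-theoretic section of $\Phi$ on this subgroup, yielding the semidirect product decomposition $\cR (\XX^{}_{\cB}) = \cS (\XX^{}_{\cB}) \rtimes \cH$.

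The main obstacle is the uniqueness-up-to-translation statement used in the centraliser argument, namely that every maximal-density element of $\XX^{}_{\cB}$ is in fact a translate of $V^{}_{\cB}$ rather than some more exotic limit configuration. This rests on carefully combining the weak model set density estimate with the hereditary structure via an inclusion-exclusion argument of the type already appearing in the proof of Lemma~\ref{lem:density}, applied to nested exhaustions by finitely many prime ideals at a time. Once this ingredient is in place, all remaining steps are essentially formal consequences of the previously established stabiliser computations from Section~\ref{sec:quadratic}.
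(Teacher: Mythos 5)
Your centraliser argument rests on a claim that is false. You assert that the maximal density $1/\zeta^{}_{\nts K}(k)$ is attained precisely on the $\ZZ^2$-orbit of $V^{}_{\cB}$, with hereditariness pushing proper subsets to strictly smaller density. In fact hereditariness works against you here: by Proposition~\ref{prop:admissible}, the configuration obtained from $V^{}_{\cB}$ by deleting any density-zero set --- even a single point --- still lies in $\XX^{}_{\cB}$ and has exactly the same density, so the set of maximal-density configurations is vastly larger than one orbit (it has full Mirsky measure; the configurations $\{x : x^{\star}+h \in W\}$ for generic $h\in H$ already give uncountably many non-translates). Moreover, even granting such a characterisation, the step ``$\phi$ commutes with every $\alpha^{}_{t}$ and therefore preserves this distinguished orbit'' is unjustified: a shift-commuting homeomorphism is a sliding block code, and sliding block codes can change the density of a configuration. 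The actual proof that $\cS(\XX^{}_{\cB})=\cG$ is Mentzen's argument as extended to lattice systems in \cite[Thm.~5.3]{BBHLN}, which exploits the admissibility and coset structure of the $\cB$-free set rather than a density dichotomy; the paper simply invokes that result.

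The normaliser half also has a gap, though a smaller one. You correctly obtain the exact sequence $1 \to \cS \to \cR \to \cH \to 1$ with $\cH \leqslant \GL(2,\ZZ)$, and the elements you build from $\stab(V)$ do show that $\cH$ is non-trivial (the paper gets this already from $-1\in\cO^{\times}$). But the decomposition $\cR = \cS \rtimes \cH$ needs a section over \emph{all} of $\cH$, and at this stage you only have one over the image of $\stab(V)$, which is not yet known to exhaust $\cH$ --- that identification is the content of Theorem~\ref{thm:groups}, proved afterwards. Since extensions of $\ZZ^2$ by finite subgroups of $\GL(2,\ZZ)$ need not split (non-symmorphic planar space groups), the splitting requires an argument; the paper obtains it from a Curtis--Hedlund--Lyndon-type theorem showing that every element of $\cR(\XX^{}_{\cB})$ is affine.
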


\begin{proof}
  This is a consequence of \cite[Thm.~5.3]{BBHLN}. In fact, the
  triviality (or minimality) of the centraliser employs an argument
  put forward by Mentzen \cite{Mentzen} for the subshift of
  square-free integers, which was then extended to lattice systems in
  \cite{BBHLN}.
  
  With this structure of the centraliser, a variant of the
  \emph{Curtis--Hedlund--Lyndon} (CHL) theorem, compare \cite{LM}, can
  be used to prove that any element of $\cR (\XX^{}_{\cB})$ must be
  affine, which gives the semi-direct product structure as claimed;
   compare \cite[Thm.~5.3]{BBHLN}, of which this is a special case.
  
  That $\cH$ must be non-trivial follows from the observation that the
  unit group $\cO^{\times}\!$ always at least contains the elements
  $\pm 1$. Via the embedding $\iota$, this maps to a non-trivial
  subgroup of $\Aut(\ZZ^2) = \GL(2,\ZZ)$.
\end{proof}

Finally, we can wrap the stabiliser structure as follows. 

\begin{theorem}\label{thm:groups}
  Let\/ $K$ be a quadratic field, with ring of integers\/ $\cO$, and
  let\/ $(\XX^{}_{\cB},\ZZ^2 )$ be the\/ $\cB$-free TDS from
  Proposition~\textnormal{\ref{prop:admissible}}.  Then, the
  normaliser is\/ $\cR = \cS \rtimes \cH$ with\/ $\cS = \cG= \ZZ^2$
  and\/ $\cH \simeq \cO^{\times} \! \rtimes \Aut^{}_{\QQ} (K)$,
  where\/ $\Aut^{}_{\QQ} (K) = \Gal (K/\QQ)$.
\end{theorem}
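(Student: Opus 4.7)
The plan is to combine the structural result from Proposition~\ref{prop:trivial} with the complete description of $\stab(V)$ given by Corollary~\ref{coro:stab}. By Proposition~\ref{prop:trivial} we already have $\cR = \cS \rtimes \cH$ with $\cS = \cG = \ZZ^2$ and $\cH$ a non-trivial subgroup of $\GL(2,\ZZ)$, so the remaining task is to identify $\cH$ explicitly with $\cO^{\times} \rtimes \Aut^{}_{\QQ}(K)$.

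The key step is to make the CHL-type argument behind Proposition~\ref{prop:trivial} explicit in order to pin down $\cH$. Any $\phi \in \cR$ is affine, so of the form $\phi = \alpha^{}_{s} \circ \psi^{}_{A}$ with $A \in \GL(2,\ZZ)$, where $\psi^{}_{A}$ acts on $\{0,1\}^{\ZZ^2}$ via the identification $\iota : \cO \to \ZZ^2$ as the $\ZZ$-linear bijection $A$ of $\cO$. Since $\alpha^{}_{s} \in \cS$, the subgroup $\cH$ is precisely the set of those $A \in \GL(2,\ZZ)$ for which $\psi^{}_{A}$ is a self-homeomorphism of $\XX^{}_{\cB}$. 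The inclusion $\stab(V) \subseteq \cH$ is then immediate: by Corollary~\ref{coro:stab}, any $A = m^{}_{\varepsilon} \circ \sigma$ permutes the prime ideals of $\cO$ and therefore the lattices $\vG^{}_{\fp^k}$ in $\cB$ from \eqref{eq:BB-def}, so by \eqref{eq:def-VB} it sends $V'_{k}$ to itself, whence $\psi^{}_{A}$ preserves the orbit closure $\XX^{}_{\cB}$.

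The converse $\cH \subseteq \stab(V)$ is the technical heart of the argument, and in my view the main obstacle. Here one must show that any $A \in \cH$ actually satisfies $A(V'_{k}) = V'_{k}$, and not merely that $A(V'_{k})$ lands somewhere in $\XX^{}_{\cB}$. The point is that $V'_{k}$ is distinguished inside $\XX^{}_{\cB}$: by Lemma~\ref{lem:density} together with the weak-model-set realisation through the CPS~\eqref{eq:CPS}, it is an admissible configuration of maximal density $1/\zeta^{}_{\nts K}(k)$ whose complement decomposes as the union $\bigcup_\fp \vG^{}_{\fp^k}$ of zero-cosets and which excludes $0 \in \ZZ^2$. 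Since $A$ is linear it fixes $0$, and since $\psi^{}_{A}$ is a homeomorphism preserving both density and admissibility (by Proposition~\ref{prop:admissible}), the coset structure of $A(V'_{k})$ matches that of $V'_{k}$, forcing $A$ to permute the lattices $\vG^{}_{\fp^k}$; a clean argument should leverage the Haar measure on the internal space $H$ from~\eqref{eq:CPS}. This gives $A(V'_{k}) = V'_{k}$ and hence $A \in \stab(V)$.

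Once $\cH = \stab(V)$ has been established, Theorems~\ref{thm:imag} and~\ref{thm:real} (packaged in Corollary~\ref{coro:stab}) identify $\cH$ with $\cO^{\times} \rtimes \Gal(K/\QQ) = \cO^{\times} \rtimes \Aut^{}_{\QQ}(K)$, with the semi-direct product structure induced by $\sigma \circ m^{}_{\varepsilon} = m^{}_{\sigma(\varepsilon)} \circ \sigma$, completing the proof.
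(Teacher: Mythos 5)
Your reduction is sound: by Proposition~\ref{prop:trivial} everything comes down to identifying the linear part $\cH$ with $\stab(V'_k)$, and the inclusion $\stab(V'_k)\subseteq\cH$ is indeed immediate, since $m^{}_{\varepsilon}\circ\sigma$ permutes the prime ideals and hence the lattices $\vG_{\nts\fp^k}$ of $\cB$. The gap is in the converse inclusion, which you correctly single out as the technical heart but then only assert. The properties you use to ``distinguish'' $V'_k$ inside $\XX^{}_{\cB}$ --- admissibility, maximal density $1/\zeta^{}_{\nts K}(k)$, and $0\notin V'_k$ --- do not characterise it: a configuration such as $\ZZ^2\setminus\bigl(\ts(c+\vG_{\nts\fq^k})\cup\bigcup_{\fp\neq\fq}\vG_{\nts\fp^k}\bigr)$, with one lattice moved to a non-zero coset, is admissible, has the same density, still omits $0$, yet differs from $V'_k$. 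So the step ``the coset structure of $A(V'_k)$ matches that of $V'_k$, forcing $A$ to permute the lattices'' does not follow from what precedes it, and the appeal to Haar measure on $H$ is a placeholder rather than an argument. Note also that what you actually get from $\psi^{}_{A}$ preserving $\XX^{}_{\cB}$ is that $A(V'_k)=\ZZ^2\setminus\bigcup_{\fp}A(\vG_{\nts\fp^k})$ is admissible; for a set of full density this is genuinely weaker than $A(V'_k)\subseteq V'_k$, and bridging that difference is exactly what is missing.

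The paper avoids this issue entirely by proving the contrapositive with a \emph{finite} witness instead of analysing $A(V'_k)$. Given $A=A^{}_{M}\notin\stab(V^{}_{k})$, one picks $w\in V^{}_{k}$ and a prime ideal $\fp^{}_{0}$ with $\fp_0^k$ dividing $\bigl(A^{}_{M}(w)\bigr)$, and, using the auxiliary ideal $\fL=\prod_{\fp\in P}\fp^k$ over the primes of small norm together with a Chinese-Remainder argument, constructs a finite set $S\ni w$ of cardinality $n=\No(\fp^{}_{0})^k$ such that $A^{}_{M}(S)$ meets every coset of $\fp_0^k$ (hence is not admissible) while $S$ itself is admissible: by cardinality for primes of large norm, by an explicit coset adjustment for $\fp^{}_{0}$ and its conjugate, and because every element of $S$ is congruent to $1$, $w$ or $1+w$ modulo $\fp^k$ for the small primes. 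Since $\XX^{}_{\cB}=\AAA$ by Proposition~\ref{prop:admissible}, this exhibits an element of $\XX^{}_{\cB}$ whose image under $\psi^{}_{A}$ leaves $\XX^{}_{\cB}$, so $A\notin\cH$. If you wish to keep your strategy, you would have to prove that admissibility of the full-density configuration $\ZZ^2\setminus\bigcup_{\fp}A(\vG_{\nts\fp^k})$ forces each $A(\vG_{\nts\fp^k})$ to coincide with some $\vG_{\nts\fq^k}$; that is plausible but nontrivial, and it is not established by what you wrote.
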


\begin{proof}
  It is clear from Theorems~\ref{thm:imag} and \ref{thm:real} that
  $\cS \rtimes \cH$ with $\cS = \ZZ^2$ and $\cH$ as stated is a
  subgroup of $\cR$. We need to prove that no other element from
  $\ZZ^2 \rtimes \GL (2,\ZZ)$ can lie in $\cR$.

  Assume, contrary to our claim, that some $(t, M)$ with $t\in\ZZ^2$
  and $M\in \GL (2,\ZZ) \setminus \stab (V'_k)$ lies in $\cR$.  Since
  clearly also $(-t,\one )\in\cR$, we may assume $t=0$.  Now, we will
  generalise the method employed in the proof of
  \cite[Thm.~6.4]{BBHLN} and construct an admissible set $S'\in V'_k$
  such that its image $M(S')$ is not admissible. Then, the unique
  $\ZZ$-linear bijection $A^{}_{M}$ of $\cO$ that corresponds to $M$
  is not an element of $\Aut (\XX_{\cB} )$. From here on, we formulate
  our arguments with $\cO$ and $V_k \subset \cO$ directly, because we
  need to work with ideals anyhow.

  Since $A^{}_M \not\in \stab (V_k)$ by assumption, there is a prime
  ideal $\fp^{}_0$ in $\cO$ and an element $w \in V_k$ such that
  $\fp_0^k$ divides the principal ideal generated by $A^{}_M (w)$.
  Let $P$ be a non-empty finite set of prime ideals of $\cO$ that
  contains all primes $\fp$ with $\No (\fp) < \No (\fp^{}_0)$, but
  none with $\No (\fp) = \No (\fp_0)$. Then, the ideal
  $\fL := \prod_{\fp \in P} \fp^k$ is a submodule of $\cO$ of index
  $\No (\fL) = \prod_{\fp \in P} \No (\fp)^k$.  Since this index is
  coprime to $n := \No (\fp_0^k) = \No (\fp^{}_0)^k$, the ideal $\fL$,
  and likewise its translate $1 + \fL$, meet all cosets of
  $A_M^{-1}(\fp_0^k)$. We set $s^{}_1 := w$ and choose numbers
  $s^{}_2, \dots, s^{}_n \in 1 + \fL$ such that
  $A^{}_M (s^{}_2), \dots, A^{}_M (s^{}_n)$ meet all non-zero cosets
  of $\fp_0^k$. We define $S := \{s^{}_1, \dots s^{}_n \}$.  Then,
  $A^{}_M (S)$ clearly meets all cosets of $\fp_0^k$ and thus is not
  admissible.

  Next, we modify the set $S$ such that it becomes admissible, but
  without changing $A^{}_{M} (S)$ modulo $\fp_0^k$.  Note that
  $S$ is clearly admissible for all primes $\fq$ with
  $\No (\fq) > \No (\fp^{}_0)$ by cardinality. If $S$ happens to meet
  all cosets of $\fp_0^k$, each of them must occur precisely once. We
  then replace $s^{}_2$ by $s'_2:= s^{}_2 + w$ which does not 
  change $A^{}_{M} (S)$ modulo $\fp_0^k$, but reduces the number of
  cosets of $\fp_0^k$ in $S$ by one.

  If there is a second prime $\overline{\fp}^{}_0$ of the same norm,
  $\No (\overline{\fp}^{}_0) = \No (\fp^{}_0)$, and if $S$ happens to
  meet all cosets of $\overline{\fp}_0^k$, we play the same game as
  above. However, due to the previous step, we can neither use $s_2'$
  nor the second element of $S$ which is congruent to $s_2'$ modulo
  $\fp_0^k$. Nevertheless, we still have enough freedom, as $n$ is at
  least $4$.

  It remains to show that $S$ is admissible for all primes $\fp$ with
  $\No (\fp) < \No (\fp_0)$. We know by construction that all
  $s_i \in S$ are congruent to $1$, $w$ or $1+w$ modulo $\fp^k$
  (indeed modulo $\fL$).  It follows that $S$ meets at most $3$ cosets
  of $\fp^k$ and is thus admissible for $\fp$ as
  $\No (\fp^k) \geqslant 2^k \geqslant 4$. Consequently,
  $S' = \iota(S) \subset \ZZ^2$ is the set we were after, and we are
  done.
\end{proof}

Both the centraliser and the normaliser are invariants of topological
dynamical systems, which is to say that topologically conjugate
systems must have isomorphic centralisers and normalisers,
respectively. While the centraliser is always the same in
Theorem~\ref{thm:groups}, hence a toothless tiger in our setting, the
normaliser allows a simple distinction between imaginary quadratic
fields, where $\cO^{\times}$ is a finite group, and real quadratic
fields, where it is not.

{\begin{coro}\label{coro:not-conj}
    Let\/ $k,\ell\geqslant 2$ be arbitrary integers. Then, the\/
    $k$-free shift induced by a real quadratic field can never be
    topologically conjugate to the\/ $\ell$-free shift induced by an
    imaginary quadratic field.  \qed
\end{coro}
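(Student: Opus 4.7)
The plan is to use the normaliser $\cR(\XX^{}_{\cB})$ as a topological conjugacy invariant, together with the structural information from Theorem~\ref{thm:groups} and Fact~\ref{fact:units}.

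First, I would observe that a topological conjugacy between two $\ZZ^2$-subshifts induces a group isomorphism between their homeomorphism groups that maps the acting $\ZZ^2$-subgroup onto the acting $\ZZ^2$-subgroup. Consequently, the centraliser and the normaliser of $\cG=\ZZ^2$ inside $\Aut(\XX^{}_{\cB})$ are preserved, and the centraliser-subgroup is sent to the centraliser-subgroup. In particular, the quotient $\cR(\XX^{}_{\cB})/\cS(\XX^{}_{\cB})$ is an abstract group invariant of the topological dynamical system.

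Next, I would invoke Theorem~\ref{thm:groups} to identify this quotient with $\cH \simeq \cO^{\times}\! \rtimes \Gal(K/\QQ)$, and then apply Fact~\ref{fact:units} to conclude that $\cH$ is finite when $K$ is imaginary quadratic (with $\lvert \cH \rvert\in\{4,8,12\}$), while $\cH$ is infinite when $K$ is real quadratic, since then $\cO^{\times}\simeq C_2\times C_\infty$. A finite group cannot be isomorphic to an infinite one, so the two normaliser quotients cannot match, which rules out any conjugacy between a $k$-free shift from a real quadratic field and an $\ell$-free shift from an imaginary one, for any $k,\ell\geqslant 2$.

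There is no real obstacle here; the only point that requires a moment of care is the functoriality statement that a topological conjugacy between $\ZZ^2$-systems intertwines not only the $\ZZ^2$-actions but also the centralisers and normalisers of $\ZZ^2$ inside the respective homeomorphism groups. This is standard and follows directly from the definition of conjugacy, so the corollary reduces to the observation that finiteness of $\cO^{\times}\!$ is itself a topological invariant of the associated $\cB$-free shift.
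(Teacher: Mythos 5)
Your proposal is correct and follows essentially the same route as the paper: the paragraph preceding the corollary invokes exactly this argument, namely that the normaliser is a conjugacy invariant and that Theorem~\ref{thm:groups} together with Fact~\ref{fact:units} makes $\cH\simeq\cO^{\times}\!\rtimes\Gal(K/\QQ)$ finite in the imaginary case and infinite in the real case. Your explicit passage to the quotient $\cR/\cS$ is a clean way of making the invariance precise, but it is not a different method.
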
}

Let us start with a $k$-free shift induced by an arbitrary quadratic
field, $\XX$ say, and consider the hypothetic situation of a factor
shift according to the commutative diagram
\begin{equation}\label{eq:factor}
  \begin{CD}
    \XX  @>\ZZ^{2}>>  \XX \\
    @V\phi VV       @VV\phi V  \\
    \YY  @>\ZZ^{2}>>  \YY
  \end{CD}
\end{equation}
where we assume $\YY$ to be another shift of this kind, hence with the
same translation group acting on $\XX$ and $\YY$. Here, $\phi$ is a
continuous surjection.

While the topological normaliser proves Corollary~\ref{coro:not-conj},
it is a more difficult question whether one such shift can be a factor
of another, according to the diagram in \eqref{eq:factor}. While one
direction can usually be excluded via the topological entropy, as we
shall explain in Section~\ref{sec:entropy} below, also the opposite
one looks highly unlikely. This is so because a factor map in the
presence of such different normalisers would imply an extremely
complicated fibre structure for the mapping $\phi$. We shall discuss
one particular example later.

For a classification of the $k$-free shifts up to topological
conjugacy, we obviously need more than the normaliser. This is clear
from Corollary~\ref{coro:stab} already for a fixed field $K$ and
different values of $k$.  While there are many advanced invariants
around, few of them are easy to determine, and hence of limited
explicit use. One exception is topological entropy, which is a
powerful invariant for the classification task, as we shall discuss
next.

\section{Entropy}\label{sec:entropy}

It is well known \cite{PH} that dynamical systems of this kind have
nice spectral properties, which allow to use the Halmos--von Neumann
theorem for a distinction up to measure-theoretic isomorphism, but not
immediately up to topological conjugacy. Fortunately, one can also
determine the topological entropy. In our number-theoretic setting
with quadratic fields, the result reads as follows.

\begin{theorem}\label{thm:entropy}
  Let\/ $K$ be a quadratic field, and let\/ $V'_{k}$ with\/
  $k\geqslant 2$ be the set defined in
  Eq.~\eqref{eq:def-Vprime}. Then, the topological entropy of the
  induced $\cB$-free TDS\/ $(\XX^{}_{\cB}, \ZZ^2)$ agrees with the
  patch counting entropy of\/ $V'_{k}$ and is given by\/
  $\log(2) \dens(V'_{k})=\log(2)/\zeta^{}_{\nts K} (k)$.
\end{theorem}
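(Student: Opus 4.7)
The plan is to combine the density formula from Lemma~\ref{lem:density} with the hereditary structure from Proposition~\ref{prop:admissible} and the weak model set description established earlier in this section. Since $\dens(V'_{k}) = 1/\zeta^{}_{\nts K}(k)$ is precisely Lemma~\ref{lem:density}, it suffices to show that the topological entropy of $(\XX^{}_{\cB}, \ZZ^{2})$ and the patch counting entropy of $V'_{k}$ both equal $\log(2)\ts \dens(V'_{k})$.

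First I would identify the two entropies. Writing $\cL^{}_{N}$ for the set of distinct patches on an $N\times N$ square $B^{}_{N}\subset\ZZ^{2}$ that appear in some element of $\XX^{}_{\cB}$, topological entropy coincides with the patch counting growth rate $\lim_{N\to\infty}\log|\cL^{}_{N}|/N^{2}$ by general symbolic dynamics. Moreover, since $\XX^{}_{\cB}$ is by definition the orbit closure of $V'_{k}$, each patch in $\cL^{}_{N}$ agrees, in the product topology, with a patch appearing in some translate of $V'_{k}$; hence the patch counting entropy of $V'_{k}$ as a single point matches that of the subshift.

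For the lower bound I would invoke hereditariness: by Proposition~\ref{prop:admissible}, every subset of $V'_{k}\cap B^{}_{N}$ is admissible and thus belongs to $\XX^{}_{\cB}$, so that
\[
  |\cL^{}_{N}| \, \geqslant \, 2^{|V'_{k}\cap B^{}_{N}|} \ts .
\]
Taking logs, dividing by $N^{2}$, and using $|V'_{k}\cap B^{}_{N}|/N^{2} \to 1/\zeta^{}_{\nts K}(k)$ from Lemma~\ref{lem:density} yields $h^{}_{\mathrm{top}}(\XX^{}_{\cB}) \geqslant \log(2)/\zeta^{}_{\nts K}(k)$.

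The matching upper bound is the main obstacle. Here I would exploit the representation of $V'_{k}$ as a weak model set of maximal density with window $W$ of Haar measure $1/\zeta^{}_{\nts K}(k)$, set up in the cut and project scheme~\eqref{eq:CPS}. The Erd\H{o}s summability $\sum_{\fp}\No(\fp)^{-k}<\infty$ holds automatically for $k\geqslant 2$, so the general entropy formula for hereditary weak model sets of maximal density (see \cite{BHS,HR}, and the lattice version from \cite{BBHLN}) gives $h^{}_{\mathrm{top}}(\XX^{}_{\cB}) = \log(2)\ts \dens(V'_{k})$. A direct combinatorial proof would amount to bounding, for a slowly growing cut-off $M=M(N)$, both the number of maximal admissible $B^{}_{N}$-patches (prescribed by choosing a missed coset of $\vG_{\nts\fp^{k}}$ for each $\fp$ with $\No(\fp)\leqslant M$) and the deviation of the size of such a maximal patch from $|V'_{k}\cap B^{}_{N}|$, the latter controlled by the tail $\sum_{\No(\fp)>M}\No(\fp)^{-k}$; summing $2^{\mathrm{size}}$ over the sub-exponentially many maximal patches then delivers the required upper bound.
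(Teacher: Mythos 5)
Your proposal is correct and follows essentially the same route as the paper: the lower bound via hereditariness (knocking out points of $V'_{k}$), the upper bound via the weak model set description with window $W$ of Haar measure $1/\zeta^{}_{\nts K}(k)$ (the paper cites \cite[Thm.~4.5]{HR} together with \cite[Rem.~4.3 and 4.6]{HR} for exactly this), and the identification of topological with patch counting entropy via the references \cite{BLR,HR}. The additional combinatorial sketch for the upper bound is a reasonable outline of what lies behind those citations but is not needed.
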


\begin{proof}
  It is well known for this type of dynamical systems that the
  topological entropy agrees with the patch-counting entropy; see
  \cite[Thm.~1 and Rem.~2]{BLR} in conjunction with
  \cite[Rem.~4.3]{HR}. As $\XX^{}_{\cB}$ is obtained as an orbit
  closure of the single set $V'_{k}$, we can derive the entropy from
  this set and its properties.
  
  Since $V'_{k}$ is hereditary, we can `knock out' each point
  individually without leaving the space $\XX^{}_{\cB}$, which
  immediately implies that $\log(2) \dens(V'_{k})$ is a lower
  bound for the entropy.

  On the other hand, the set $V'_{k}$ is a weak model set of maximal
  density, with window $W$ in internal space.  In this case, since we
  used a formulation with a lattice of density $1$, we know from
  \cite[Thm.~4.5]{HR} that $\log(2) \vol(W)$ is an upper bound for the
  patch counting entropy of $V^{\prime}_{k}$. This bound also applies
  to the topological entropy of the dynamical system, see
  \cite[Rem.~4.3 and 4.6]{HR}.  Since
  $\vol (W) = 1/\zeta^{}_{\nts K} (k) = \dens(V'_{k})$ by
  Lemma~\ref{lem:density}, our claim follows.
\end{proof}

\begin{lemma}\label{lem:unique}
  Let\/ $K$ be a real quadratic field and let\/ $k = 2\ell\in\NN$ be
  an even integer. Then, $K$ and\/ $k$ are uniquely determined by the
  number\/ $\zeta^{}_{\nts K} (k)$.
\end{lemma}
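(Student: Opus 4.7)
The plan is to establish a closed form expressing $\zeta^{}_{\nts K}(2\ell)$ as a non-zero rational multiple of $\pi^{4\ell}/\sqrt{d^{}_{\nts K}}$, and then to combine the transcendence of $\pi$ with the fact that the square-free part of the discriminant of a real quadratic field coincides with its defining radicand. First, I would factorise $\zeta^{}_{\nts K} = \zeta \cdot L(\cdot, \chi^{}_{\nts K})$, where $\chi^{}_{\nts K}$ is the Kronecker character associated to $K$, primitive of conductor $d^{}_{\nts K}$ and \emph{even} because $K$ is real. At the positive even integer $s=2\ell$ the parities of $\chi^{}_{\nts K}$ and of $s$ match, so the functional equation for $L(s, \chi^{}_{\nts K})$, together with the Gauss-sum value $G(\chi^{}_{\nts K}) = \sqrt{d^{}_{\nts K}}$ and the rationality (and non-vanishing) of $L(1-2\ell, \chi^{}_{\nts K}) = - B^{}_{2\ell, \chi^{}_{\nts K}}/(2\ell)$, gives $L(2\ell, \chi^{}_{\nts K}) \in \QQ^{\times} \cdot \pi^{2\ell}/\sqrt{d^{}_{\nts K}}$. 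Combined with Euler's formula $\zeta(2\ell) \in \QQ^{\times} \cdot \pi^{2\ell}$, this produces the key representation
\[
   \zeta^{}_{\nts K} (2\ell) \, = \, q \cdot \pi^{4\ell}
   / \sqrt{d^{}_{\nts K}} \quad \text{for some } q \in \QQ^{+} ,
\]
with $q>0$ since $\zeta^{}_{\nts K}(2\ell) > 1$.

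Next, suppose $\zeta^{}_{\nts K}(2\ell) = \zeta^{}_{\nts K'}(2\ell')$ for a second pair of the same kind, with parameters $q'$ and $d^{}_{\nts K'}$. Squaring the equality of the two representations and rearranging yields
\[
   \pi^{8(\ell - \ell')} \, = \, \frac{(q')^2 \ts d^{}_{\nts K}}
   {q^2 \ts d^{}_{\nts K'}} \, \in \, \QQ^{\times} ,
\]
and the transcendence of $\pi$ forces $\ell = \ell'$, hence $k = k'$. With that in hand, the same identity collapses to $d^{}_{\nts K}/d^{}_{\nts K'} = (q/q')^2 \in (\QQ^{+})^2$, so the two discriminants differ by the square of a positive rational and therefore share the same square-free part. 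Writing $K = \QQ(\sqrt{d\ts}\,)$ with $d>1$ square-free, the discriminant $d^{}_{\nts K}$ equals $d$ for $d \equiv 1 \bmod 4$ and $4 d$ for $d \equiv 2,3 \bmod 4$, and in either case the square-free part of $d^{}_{\nts K}$ is exactly $d$. Hence the defining radicands of $K$ and $K'$ coincide, giving $K = K'$.

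The main obstacle is the first step, namely the classical identity $\zeta^{}_{\nts K}(2\ell) \in \QQ^{\times} \cdot \pi^{4\ell}/\sqrt{d^{}_{\nts K}}$. It rests on the explicit evaluation $G(\chi^{}_{\nts K}) = \sqrt{d^{}_{\nts K}}$ (where the positive square root enters because $K$ is real quadratic) and on the non-vanishing of $B^{}_{2\ell, \chi^{}_{\nts K}}$; both of these facts depend on the parity of $k$ matching that of $\chi^{}_{\nts K}$, which is precisely where the hypothesis that $k$ is even enters decisively and cannot be dispensed with in this approach. After Step~1, the remainder is a brief application of the transcendence of $\pi$ together with the elementary observation about discriminants and square-free parts.
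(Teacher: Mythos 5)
Your proof is correct and follows essentially the same route as the paper: where the paper simply cites Siegel's theorem for the identity $\zeta^{}_{\nts K}(2\ell) = p\ts\pi^{4\ell}/(q\sqrt{d^{}_{\nts K}})$, you rederive that same input from the factorisation $\zeta^{}_{\nts K} = \zeta \cdot L(\cdot,\chi^{}_{\nts K})$, the functional equation, and the Gauss-sum evaluation, which is a legitimate standard derivation. The remaining steps --- transcendence of $\pi$ to force $\ell = \ell'$, then the elementary fact that two real quadratic discriminants whose ratio is a rational square share the same square-free part (namely the radicand $d$) --- match the paper's argument, with your square-free-part phrasing replacing the paper's brief case check on $m,n\in\{1,2\}$.
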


\begin{proof}
   By a result due to Siegel \cite{Siegel}, see also
   \cite[Ch.~VII, Cor.~9.9]{N},  we know that
 \[
   \zeta^{}_{\nts K} (2 \ell) \, = \, \frac{p \,\pi^{4 \ell}}{q
     \sqrt{d^{}_{\nts K}}}
 \]  
 holds for some coprime $p,q \in \NN$, where $d^{}_{\nts K}$ is the
 discriminant of $K$ as before. Now, if $K$ and $K'$ are both real
 quadratic fields, the identity
 $\zeta^{}_{\nts K} (2 \ell) = \zeta^{}_{\nts K'} (2 \ell{\ts}')$
 implies that $\pi^{4 (\ell{\ts}' - \ell)}$ is algebraic, which forces
 $\ell{\ts}'=\ell$. Then, we get the identity
 $m^2 d^{}_{\nts K} \, = \, n^2 d^{}_{\nts K'}$ for some coprime
 $m,n \in \NN$, hence $m^2 | d^{}_{\nts K'}$ and
 $n^2 | d^{}_{\nts K}$. Consequently, one must have $m,n\in\{1,2\}$,
 and by checking the possible cases one finds that $m=n=1$ is the only
 option.
\end{proof}

This has an interesting consequence on the role of topological
entropy for our dynamical systems as follows.

\begin{prop}
  Among the\/ $k$-free shifts that emerge from real quadratic fields,
  with\/ $k$ even, no two are topologically conjugate unless they are
  equal.  In particular, topological entropy is a complete invariant
  within the class.
\end{prop}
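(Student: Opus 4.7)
The plan is to invoke the two preceding results in sequence, using topological entropy as the bridge between the dynamical and number-theoretic sides. Topological entropy is a classical invariant of topological conjugacy, so if two shifts $(\XX^{}_{\cB}, \ZZ^2)$ and $(\XX^{}_{\cB'}, \ZZ^2)$ coming from real quadratic fields $K$, $K'$ with even exponents $k, k'$ were topologically conjugate, their entropies would necessarily coincide.

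First, I would apply Theorem~\ref{thm:entropy} to both shifts, so that the assumed conjugacy yields the equality
\[
   \frac{\log (2)}{\zeta^{}_{\nts K} (k)} \, = \,
   \frac{\log (2)}{\zeta^{}_{\nts K'} (k')} \ts ,
\]
which, after cancelling $\log (2)$ and inverting, is the same as $\zeta^{}_{\nts K} (k) = \zeta^{}_{\nts K'} (k')$. At this stage we are squarely within the hypotheses of Lemma~\ref{lem:unique}, since both exponents are even and both fields are real quadratic.

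Next, I would feed this identity directly into Lemma~\ref{lem:unique}, whose conclusion is precisely that the pair $(K, k)$ is uniquely determined by the value $\zeta^{}_{\nts K}(k)$. Hence $K = K'$ and $k = k'$, so the underlying sets of $k$-free integers and the associated $\cB$-free shifts agree; that is, the two shifts are equal, not merely conjugate. Since topological entropy is by construction a conjugacy invariant and we have shown it separates the members of this class, it is a complete invariant, which establishes the second sentence of the proposition.

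There is no real obstacle here, as all the serious work has been done in Theorem~\ref{thm:entropy} and Lemma~\ref{lem:unique}; the only point demanding any care is to confirm that the hypotheses of Lemma~\ref{lem:unique} are genuinely met on both sides (real quadratic field, even exponent), which is exactly the restricted class in the statement. The proof is therefore short and essentially a formal combination of the two preceding results.
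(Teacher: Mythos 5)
Your proposal is correct and follows essentially the same route as the paper: equate the entropies $\log(2)/\zeta^{}_{K}(k)$ via Theorem~\ref{thm:entropy} and then invoke Lemma~\ref{lem:unique} to recover $(K,k)$ uniquely. The paper's version merely adds a brief remark on how $k$ and the discriminant are actually extracted from the entropy value, which is already contained in the proof of Lemma~\ref{lem:unique}.
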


\begin{proof}
	 By Theorem~\ref{thm:entropy} the entropy has the form
	$s = \log (2) / \zeta^{}_{\nts K} (k)$, from which we can derive
	the value $\zeta^{}_{\nts K} (k)$. The latter determines
	$K$ and $k$ by Lemma~\ref{lem:unique}.
\end{proof}

A more general result of this kind involving arbitrary integers
$k \geqslant 2$ might still hold, but deciding this seems to require
new ideas in view of the fact that the known formulas for
$\zeta^{}_K (k)$ at odd positive integers $k$ are too cumbersome;
compare \cite{BCH}. \smallskip

Next, let us show that a folklore conjecture on zeta values predicts
that the entropy is a complete invariant within the class of $k$-free
shifts in imaginary quadratic fields with $k$ odd.  To this end, let
$K$ be an imaginary quadratic field and write $k = 2m + 1$ with
$m\in\NN$.  We want to determine $K$ and $k$ from the value
$\zeta^{}_K(k)$.

Let $\chi \colon \mathrm{Gal}(K/\QQ) \longrightarrow \{\pm 1\}$ be the
unique non-trivial character of $\mathrm{Gal}(K/\QQ)$.  Let $f$ be the
conductor of $\chi$, so $f$ is the smallest positive integer such that
$K$ is contained in the cyclotomic field $\QQ(\xi^{}_f)$, where
$\xi^{}_f$ denotes a primitive $f$-th root of unity. Then, one has
$f = \lvert d^{}_K \rvert$, as follows from \cite[Ch.~VII,
Conductor-Discriminant-Formula 11.9]{N}, but can also be seen more
directly as follows.
	
  We first show that $K$ is contained in $\QQ(\xi^{}_{|d_K|} \, )$.
  Let $K = \QQ(\sqrt{d})$, where $d<0$ is square-free.  If $p$ is an
  odd prime dividing $d_K$, the field
  $\QQ\bigl( \sqrt{(-1)^{(p-1)/2} \, p} \, \bigr)$ is contained in
  $\QQ(\xi^{}_{|d_K|})$; see \cite[p. 51]{Fis}.  Now, consider the
  case $d\equiv 1 \bmod 4$.  Then, $-d$ is a product of distinct
  (positive) odd primes $p^{}_i$, and the number of those $p^{}_i$
  which are $\equiv 3 \bmod 4$ is odd. Hence, we have
\[
  d \, = \, d^{}_K \, =
  \prod_i (-1)^{(p_i-1)/2} \, p^{}_i
\]
and thus $\sqrt{d} \in \QQ(\xi^{}_{|d_K|})$ as desired.  The cases
$d\equiv 2,3 \bmod 4$ can be treated similarly, using the additional
observation that $\QQ \bigl( \sqrt{\pm \ts 2}\, \bigr)$ both are contained
in $\QQ(\xi^{}_8)$.

Conversely, let $f$ be the minimal integer such that $K$ is contained
in $\QQ(\xi^{}_{f})$. By minimality, one has $f \not\equiv 2 \bmod 4$,
as $\QQ(\xi^{}_{f}) = \QQ(\xi^{}_{f/2})$ otherwise.  Moreover, a prime
$p$ is ramified in $\QQ(\xi^{}_{f})$ if and only if $p$ divides
$f$. Since the primes dividing $d_K$ already ramify in $K$, it follows
that $f$ must be divisible by $d_K$, which shows the claim. Only if
$d_K$ is divisible by $8$, this requires a small additional argument
that relies on the fact that none of the fields
$\QQ \bigl( \sqrt{\pm \ts 2}\, \bigr)$ is contained in
$\QQ(\xi^{}_4) = \QQ (\ii)$.

Recall that there is a natural isomorphism
$(\ZZ / \nts f \ZZ)^{\times} \simeq
\mathrm{Gal}\bigl(\QQ(\xi^{}_f)/\QQ\bigr)$ that maps $a \bmod f$ to
the automorphism $\xi^{}_f \mapsto \xi^{a}_f$.  Then, $\chi$ may be
viewed as a Dirichlet character
$(\ZZ /\nts f \ZZ)^{\times} \longrightarrow \CC^{\times}$ with kernel
$\mathrm{Gal}(\QQ(\xi^{}_f) / K)$.  Since $K$ is imaginary, the
character $\chi$ is odd (or has exponent $1$ in the terminology of
\cite[Ch.~VII, \S~2]{N}), as $\chi(-1) = -1 = - \chi(1)$.

Let $\zeta(s) = \zeta^{}_{\QQ}(s)$ and $L(\chi,s)$ be the Riemann zeta
function and the Dirichlet $L$-series attached to $\chi$,
respectively.  Since $k$ is odd and thus congruent to the exponent of
$\chi \bmod 2$, \cite[Ch.~VII, Cor.~10.5 and 2.10]{N} imply that
\begin{equation}\label{eqn:zeta-ratio}
  \frac{\zeta^{}_K (k)}{\zeta (k)} \, = \, L(\chi, k) \, = \,
  (-1)^{m+1}\, \frac{\tau(\chi)}{2 \ii} 
  \left(\frac{2\pi}{f}\right)^k \frac{B^{}_{k,\chi}}{k!} \ts .
\end{equation}
Here, $B^{}_{k,\chi}$ denotes the associated generalised Bernoulli
number which is rational because the image of $\chi$ is,  compare
\cite[p.~441]{N},  and $\tau (\chi)$ is the Gauss sum given by 
\[
  \tau(\chi) \, \defeq \sum_{a \in (\ZZ/\nts f \ZZ)^{\times}} \!
  \chi(a) \, \xi_f^a \, = \sum_{a \in \mathrm{ker}(\chi)} \bigl(
  \xi_f^a - \xi_f^{-a} \bigr) \in \ii \RR \ts .
\]

By \cite[Ch.~VII, Prop.~2.6]{N}, its
absolute value is $\sqrt{f}$, so that indeed
\begin{equation}\label{eqn:Gauss-sum}
  \tau(\chi) \, = \, \pm \ii \sqrt{f} \, =  \, \pm \ii 
  \sqrt{\lvert d^{}_K \rvert} \ts .
\end{equation}
It now follows from \eqref{eqn:zeta-ratio} and \eqref{eqn:Gauss-sum}
that
\begin{equation}\label{eqn:zeta-pi-ratio}
  \alpha^{}_k \, \defeq \, \frac{\zeta^{}_K (k)}{\zeta(k) \pi^k} 
  \, = \, q \sqrt{\lvert d^{}_K \rvert}
\end{equation} 
for some (explicit) non-zero rational number $q$.  It is now
conjectured that the numbers
\[
	\pi, \zeta(3), \zeta(5), \zeta(7), \dots
\]
are algebraically independent (see \cite{Fis} for a survey).  Assuming
this, Eq.~\eqref{eqn:zeta-pi-ratio} shows that the value
$\zeta^{}_K (k)$ determines $k$ uniquely. Once we know $k$, we
retrieve $K$ from
$K = \QQ \bigl( \sqrt{\lvert d^{}_K \rvert} \, \bigr) =
\QQ(\alpha^{}_k)$, where  $\alpha_k$ via
  \eqref{eqn:zeta-pi-ratio} is clearly determined by $\zeta^{}_K (k)$
and $k$. Let us sum this up as follows.

\begin{coro}\label{coro:imag-entropy}
  Let\/ $k,\ell \geqslant 3$ be arbitrary odd integers.  Under the
  assumption that the numbers\/
  $\pi, \zeta(3), \zeta(5), \zeta(7), \ldots $ are algebraically
  independent, no\/ $k$-free shift induced an imaginary quadratic
  field\/ $K$ can be topologically conjugate to the\/ $\ell$-free
  shift induced by\/ $K$ or, in fact, by any other imaginary quadratic
  field.  \qed
\end{coro}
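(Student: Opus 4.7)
The plan is to combine Theorem~\ref{thm:entropy}, which identifies the topological entropy as $\log(2)/\zeta^{}_{\nts K}(k)$, with the explicit arithmetic formula for $\zeta^{}_{\nts K}(k)$ at odd $k$ developed in the paragraphs immediately preceding the corollary. Since topological entropy is invariant under topological conjugacy, a hypothetical conjugacy between the $k$-free shift from an imaginary quadratic field $K$ and the $\ell$-free shift from an imaginary quadratic field $K'$ (with $k,\ell \geqslant 3$ odd) would force $\zeta^{}_{\nts K}(k) = \zeta^{}_{\nts K'}(\ell)$. The task thus reduces to reconstructing the pair $(K,k)$ from this single numerical value.

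Next, I would invoke the identity
\[
  \alpha^{}_{k} \, = \, \myfrac{\zeta^{}_{\nts K}(k)}{\zeta(k)\, \pi^k}
  \, = \, q \sqrt{\lvert d^{}_{\nts K} \rvert} \ts ,
\]
with some $q \in \QQ^{\times}\!$, obtained from the factorisation $\zeta^{}_{\nts K} = \zeta\cdot L(\chi,\cdot)$, the closed form of $L(\chi,k)$ in terms of the rational generalised Bernoulli number $B^{}_{k,\chi}$, and the Gauss sum bound $\lvert\tau(\chi)\rvert = \sqrt{\lvert d^{}_{\nts K}\rvert}$. The assumption $\zeta^{}_{\nts K}(k) = \zeta^{}_{\nts K'}(\ell)$ then rewrites as
\[
  q \ts \zeta(k) \ts \pi^k \sqrt{\lvert d^{}_{\nts K}\rvert}
  \, = \, q' \ts \zeta(\ell) \ts \pi^\ell \sqrt{\lvert d^{}_{\nts K'}\rvert} \ts ,
\]
with $q,q' \in \QQ^{\times}\!$. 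The decisive step is to appeal to the conjectured algebraic independence of $\pi, \zeta(3), \zeta(5), \ldots$ over $\QQ$, equivalently over $\overline{\QQ}$, to force $k = \ell$; if $k \ne \ell$, the above identity would yield a non-trivial polynomial relation over $\overline{\QQ}$ between $\pi$ and the two odd zeta values involved, with the algebraic factors $q\sqrt{\lvert d^{}_{\nts K}\rvert}$ and $q'\sqrt{\lvert d^{}_{\nts K'}\rvert}$ absorbed as coefficients, contradicting the hypothesis.

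Once $k = \ell$ is established, the identity collapses to $q\sqrt{\lvert d^{}_{\nts K}\rvert} = q'\sqrt{\lvert d^{}_{\nts K'}\rvert}$, so the square-free parts of $\lvert d^{}_{\nts K}\rvert$ and $\lvert d^{}_{\nts K'}\rvert$ agree, and hence $K = \QQ\bigl(\sqrt{\lvert d^{}_{\nts K}\rvert}\bigr) = \QQ\bigl(\sqrt{\lvert d^{}_{\nts K'}\rvert}\bigr) = K'\nts$. The main obstacle is the careful bookkeeping of which factor lies in $\overline{\QQ}$ and which in $\QQ\bigl(\pi,\zeta(k),\zeta(\ell)\bigr)$: the whole argument hinges on the rationality of $q, q'$ (coming from the generalised Bernoulli numbers) and the algebraicity of the Gauss-sum factor, which together isolate $\pi$ and the odd zeta values as the only potentially transcendental ingredients, making the independence hypothesis directly applicable.
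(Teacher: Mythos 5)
Your proposal is correct and takes essentially the same route as the paper: its ``proof'' of the corollary is exactly the preceding discussion, which derives $\zeta^{}_{K}(k)/\bigl(\zeta(k)\ts\pi^k\bigr)=q\sqrt{\lvert d^{}_{K}\rvert}$ with $q\in\QQ^{\times}$ from the factorisation $\zeta^{}_{K}=\zeta\cdot L(\chi,\cdot\ts)$, the rationality of $B^{}_{k,\chi}$ and the Gauss-sum evaluation, and then uses the conjectured algebraic independence to recover first $k$ and then $K$ from the entropy value. Your write-up merely makes explicit the polynomial relation that the independence hypothesis rules out, which the paper leaves implicit.
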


Next, observe that, for $a>1$, the fraction $\frac{1}{1-a^{-s}}$ is
strictly decreasing on the set $\{ s>1\}$. Consequently, the Dedekind
zeta function
\[
  \zeta^{}_{\nts K} (s) \, = \ts \prod_{\fp}
  \myfrac{1}{1-\No (\fp)^{-s}} 
\]
of a quadratic field, which is absolutely convergent on the half-plane
$\{ \mathrm{Re} (s) > 1 \}$, is strictly decreasing on $\{ s >1 \}$ as
well, with $\lim_{s\to \infty} \zeta^{}_{\nts K} (s) = 1$.  This has
the following consequence.

\begin{coro}
  Let\/ $K$ be any quadratic field, and\/ $\XX_k$ the\/ $k$-free shift
  induced by it. Then, the TDS\/ $(\XX_k, \ZZ^2 )$ can never be a
  factor of\/ $(\XX_{\ell}, \ZZ^2 )$ when $k > \ell$.
\end{coro}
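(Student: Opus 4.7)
My plan is to use topological entropy as a conjugacy (in fact factor) invariant. Recall that if $(\YY, \ZZ^2)$ is a topological factor of $(\XX, \ZZ^2)$ via a continuous equivariant surjection, then the topological entropies satisfy $h(\YY) \leqslant h(\XX)$; this is a standard and purely general fact about factor maps of $\ZZ^d$-actions on compact metric spaces. So the strategy is to show that, for $k > \ell$, one has the strict inequality $h(\XX_k) > h(\XX_\ell)$, which immediately rules out $(\XX_k, \ZZ^2)$ being a factor of $(\XX_\ell, \ZZ^2)$.

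For this, I would simply combine Theorem~\ref{thm:entropy} with the monotonicity property of the Dedekind zeta function stated just before the corollary. By Theorem~\ref{thm:entropy}, the entropies are
\[
  h(\XX_k) \, = \, \frac{\log (2)}{\zeta^{}_{\nts K} (k)}
  \quad \text{and} \quad
  h(\XX_\ell) \, = \, \frac{\log (2)}{\zeta^{}_{\nts K} (\ell)} \ts .
\]
Since $\zeta^{}_{\nts K}$ is strictly decreasing on the real ray $(1, \infty)$ (by the Euler product argument recalled above the corollary), one has $\zeta^{}_{\nts K} (k) < \zeta^{}_{\nts K} (\ell)$ whenever $k > \ell \geqslant 2$, hence $h(\XX_k) > h(\XX_\ell)$.

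Putting this together, a hypothetical factor map $\phi \colon \XX_\ell \twoheadrightarrow \XX_k$ commuting with the $\ZZ^2$-action would force $h(\XX_k) \leqslant h(\XX_\ell)$, in direct contradiction to the strict inequality just derived. This rules out the existence of such a map and completes the argument. There is no real obstacle here: the only non-trivial inputs are the entropy formula from Theorem~\ref{thm:entropy} and the strict monotonicity of $\zeta^{}_{\nts K}$, both of which are already established at this point of the paper.
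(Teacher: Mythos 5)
Your argument is correct and is exactly the paper's own proof: invoke the entropy formula of Theorem~\ref{thm:entropy}, use the strict monotonicity of $\zeta^{}_{\nts K}$ on $(1,\infty)$ to get $h(\XX_k) > h(\XX_\ell)$ for $k > \ell$, and conclude via the standard fact that a topological factor cannot have larger entropy. No gaps; nothing further to add.
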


\begin{proof}
  The entropy of $\XX_k$ is $\log (2)/\zeta^{}_{\nts K} (k)$ by
  Theorem~\ref{thm:entropy}. As such, via the above observation, it is
  strictly increasing on $\{ k\in\NN : k \geqslant 2 \}$, with
  limiting value $\log (2)$ as $k\to\infty$.

  Since no factor of a TDS, in the sense of \eqref{eq:factor}, can
  have a larger entropy than the original TDS, compare
  \cite[Prop.~10.1.3]{VO}, the claim is immediate.
\end{proof}

Let us close with another example, where we consider the shift
$\XX_V$ induced by the visible lattice points 
$V  =  \{ (m,n) \in \ZZ^2 : (m,n) = 1 \}
    =  \ZZ^2 \setminus \bigcup_{p} p\ts \ZZ^2$,
where $p$ runs through the rational primes,
in comparison to the shift $\XX_{\mathrm{G}}$
induced by the square-free Gaussian integers.

\begin{prop}
  Neither of the two shifts\/ $(\XX_V,\ZZ^2)$ and\/
  $(\XX_{\mathrm{G}},\ZZ^2)$ can be a topological factor of the other
  in the sense of the diagram in \eqref{eq:factor}.
\end{prop}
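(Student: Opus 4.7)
The plan is to handle the two possible factor directions separately, using different invariants for each.

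For the direction that $\XX^{}_{\mathrm{G}}$ is not a factor of $\XX^{}_V$, topological entropy is enough. The visible-lattice-points shift $\XX^{}_V$ fits the same $\cB$-free framework with $\cB = \{ p \ts \ZZ^2 : p \text{ prime} \}$, so the proof of Theorem~\ref{thm:entropy} applies verbatim to give $h(\XX^{}_V) = \log(2)/\zeta(2)$, while applying the same theorem to $K = \QQ(\ii)$ with $k = 2$ yields $h(\XX^{}_{\mathrm{G}}) = \log(2)/\zeta^{}_{\QQ(\ii)}(2)$. The standard factorisation $\zeta^{}_{\QQ(\ii)}(s) = \zeta(s)\ts L(\chi^{}_{-4}, s)$ together with $L(\chi^{}_{-4}, 2) = G$ (Catalan's constant, satisfying $G < 1$) gives $\zeta^{}_{\QQ(\ii)}(2) < \zeta(2)$ and hence $h(\XX^{}_{\mathrm{G}}) > h(\XX^{}_V)$. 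Since topological entropy does not increase under factor maps, this direction is ruled out.

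For the opposite direction, entropy is consistent with a factor map, so a finer invariant is needed. I would use the normaliser. Since every $\GL(2, \ZZ)$ matrix preserves $\gcd = 1$, we have $\stab(V) = \GL(2, \ZZ)$, and the argument of Theorem~\ref{thm:groups} gives $\cR(\XX^{}_V) = \ZZ^2 \rtimes \GL(2, \ZZ)$; by contrast, Theorem~\ref{thm:groups} applied to $K = \QQ(\ii)$ yields $\cR(\XX^{}_{\mathrm{G}}) = \ZZ^2 \rtimes D_4$. Assuming for contradiction a topological factor $\phi : \XX^{}_{\mathrm{G}} \to \XX^{}_V$, I would fix $M \in \GL(2, \ZZ) \setminus D_4$ with its associated extended symmetry $f^{}_M$ of $\XX^{}_V$, and aim to lift $f^{}_M$ along $\phi$ to an extended symmetry $\tilde f^{}_M \in \cR(\XX^{}_{\mathrm{G}})$. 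Such a lift would force $M \in \stab(V^{}_{\mathrm{G}}) = D_4$, contradicting the choice of $M$.

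The main obstacle lies in justifying the lift. A priori, a topological factor map need not respect the normaliser, but here one can exploit the common CPS background in \eqref{eq:CPS}: both shifts admit a unique measure of maximal entropy (the Mirsky measure), which is invariant under the full normaliser action via the Haar measure on the internal space. The factor map induces a corresponding map on internal spaces, and $f^{}_M$-equivariance in this background yields a measurable compatibility with the $\phi$-fibres. The hard step is upgrading this to a topological lift: a Curtis--Hedlund--Lyndon rigidity analysis, in the spirit of Proposition~\ref{prop:trivial} and Theorem~\ref{thm:groups}, should force the sliding-block code defining $\phi$ to commute with the $M$-twist, which is incompatible with $M \notin D_4$. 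This is precisely the ``extremely complicated fibre structure'' anticipated earlier in the paper, and making it rigorous will be the principal technical effort.
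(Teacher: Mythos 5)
Your first half is fine and coincides with the paper's argument: the entropy computation $h(\XX^{}_{\mathrm{G}})=\log(2)/\zeta^{}_{\QQ(\ii)}(2)>\log(2)/\zeta(2)=h(\XX^{}_V)$ (via $\zeta^{}_{\QQ(\ii)}=\zeta\cdot L(\chi^{}_{-4},\cdot)$ and $L(\chi^{}_{-4},2)<1$) correctly rules out $\XX^{}_{\mathrm{G}}$ as a factor of $\XX^{}_V$, since entropy cannot increase under factors.

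The second half, however, has a genuine gap. The normaliser is an invariant of topological \emph{conjugacy}, not of factor maps, and the step your argument hinges on --- lifting an extended symmetry $f^{}_M$ of the \emph{factor} $\XX^{}_V$ along $\phi$ to an extended symmetry of the \emph{extension} $\XX^{}_{\mathrm{G}}$ --- has no general justification and is false for factor maps in general: a symmetry of a factor need not lift, and even a symmetry of the extension only descends when it permutes the $\phi$-fibres. Your proposed remedy via the Mirsky measures and the common cut-and-project background can at best produce measure-theoretic compatibility on a set of full measure; it gives no control over the sliding-block code $\varPhi$ itself, and you explicitly defer the ``upgrading to a topological lift'' that would be the entire content of the proof. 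As written, the second direction is a programme, not an argument, and the programme rests on a step that fails in general.

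For comparison, the paper handles this direction by a direct combinatorial construction rather than by symmetry considerations. Assuming $\phi\colon \XX^{}_{\mathrm{G}}\to\XX^{}_V$ exists, the CHL theorem gives a local rule $\varPhi$ on a finite memory set $M$, and surjectivity forces the existence of a G-admissible pattern $P\subset M$ with $\varPhi(P)=1$. Choosing an inert prime $p$ with $p^2>\card(P)$ and using the Chinese Remainder Theorem, one places $p^2$ translates of $P$ (well separated, and congruent to $0$ modulo $q^2$ for every Gaussian prime $q$ of norm below $p^2$) so that their union $P^{*}$ is still G-admissible, yet the image $\phi(1^{}_{P^{*}})$ contains a complete set of residues modulo $p\ts\ZZ^2$ and hence cannot be $V$-admissible. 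If you want to salvage your write-up, replacing the normaliser-lifting sketch by an explicit admissible-pattern construction of this kind is the way to go.
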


\begin{proof}
  The entropy of $(\XX_V,\ZZ^2)$ is $\log (2) / \zeta (2)$; compare
  \cite{BH}. Now, $\zeta^{}_{\QQ (\ii)} (2) = \zeta (2) \, L(\chi, 2)$
  with $\chi$ the principal character of $\QQ (\ii)$, where
  $L (\chi, 2) < 1$. Consequently, as the topological entropy of the
  Gaussian shift is larger than that of $\XX_V$, one direction is
  ruled out immediately.
 
  For the other direction, assuming that we have a surjective factor
  map $\phi \colon \, \XX_{\mathrm{G}} \longrightarrow \XX_V$, we now
  construct a configuration that is legal in $\XX_{\mathrm{G}}$ whose
  image under $\phi$ cannot lie in $\XX_V$. Due to the CHL theorem,
  $\phi$ is a sliding block map, hence given by a local function
  $\varPhi\colon \, \{0,1\}^M \longrightarrow \{0,1\}$, where
  $M\nts\subset \ZZ^2$ is the \emph{memory set} or \emph{local window}
  of $\phi$.

  Since $\phi$ is surjective, the singleton pattern
  ${1}^{}_{\{\bs{0}\}} \in \XX_V$ must have some preimage in
  $\XX_{\mathrm{G}}$. This, in turn, implies the existence of some
  point pattern $P$ such that $\varPhi (P) = 1$. As $P$ is a pattern
  that appears in some element of $\XX_{\mathrm{G}}$, we may identify
  $P$ with a G-admissible finite subset of $M$, which we also call $P$
  by slight abuse of notation. Thus, for every Gaussian prime $q$,
  there is some coset $r^{}_{\nts q} + (q^2)$ whose intersection with
  $P$ is empty.

  In what follows, let $p$ be an inert rational prime such that
  $p^2 > \card ( P) $.  Let $\{ q^{}_1, q^{}_2, \ldots, q^{}_k \}$ be
  the set of all Gaussian primes of norm less than $p^2$, which is a
  finite set.  By the Chinese Remainder Theorem, for every element
  $(m, n)$ with $0 \leqslant m,n \leqslant p-1$, there exists a unique
  solution
  $\bmod \, {\bigl( q_1^2 \ts q_2^2 \cdots q_k^2 \ts p^2 \bigr)}$ in
  $\ZZ [\ii]\simeq \ZZ^2$ for the system of equations given by
\begin{align*}
  x \equiv & \; 0 \;  \bmod \bigl( q^2_i \bigr) \quad
      \text{for all } 1 \leqslant i \leqslant k \quad \text{and} \\
  x \equiv & \;  (m,n)  \;  \bmod \bigl( p^2 \bigr).
\end{align*}
Let $x^{(m,n)}\in\ZZ^2$ be one solution of this system of
congruences. Then, the set of \emph{all} solutions is the lattice
coset $x^{(m,n)}+ \bigl( q_1^2 \ts q_2^2\cdots q_k^2 \ts p^2
\bigr)$. Clearly, no translation by an element of the lattice
$\bigl( q_1^2 \ts q_2^2\cdots q_k^2 \ts p^2 \bigr)$ changes the
equivalence class $\bmod{\bigl(q_i^2 \bigr)} $ of any element of $P$.
Now, for any $q \in \{ q^{}_1 , \ldots , q^{}_k \}$, there is some
coset $r^{}_{\nts q} + \bigl( q^2 \bigr)$ that has empty intersection
with $P$, whence we also have the relation
$(x^{(m,n)} + P)\cap \bigl( r^{}_q + (q^2) \bigr)=\varnothing$.

Clearly,
$x^{(m,n)}+ \bigl( q_1^2 \ts q_2^2 \cdots q_k^2 \ts p^2 \bigr)$ is a
relatively dense subset of $\ZZ [\ii]$. Then, for every $(m, n)$ with
$0\leqslant m,n \leqslant p-1$, we can choose an element
$y^{(m, n)} \in x^{(m, n)} + \bigl( q_1^2 \ts q_2^2 \cdots q_k^2 \ts
p^2 \bigr)$ such that
$\| y^{(m, n)} - y^{(m', n')} \|^{}_{2} > 2 \diam (M)$ holds for
$(m, n) \neq (m', n')$.  Let us now consider the set
\[ 
   P^{*} \, = \bigcup_{ 0 \leqslant m, n \leqslant p - 1}
   y^{(m, n)} + P ,
\]
where we recall that we identify $P$ with a subset of $M$.
Since every term of this disjoint union has empty intersection with
$r^{}_q + \bigl( q^2 \bigr)$, where $q$ is any of the $q^{}_{i}$, the
set $P^{*}$ has empty intersection with this coset as well.

Furthermore, $\card (P^{*}) = p^2 \ts \card (P) < p^4$, due to our
choice of $p$. Since the prime $p$ is inert, and thus also a Gaussian
prime, we have $\bigl[ \ZZ[\ii] : (p^2) \bigr] = \No (p)^2 = p^4$, so
$P^{*}$ necessarily misses a coset of $(p^2)$.  The same holds for
every Gaussian prime of norm larger than $p^2$. Since any Gaussian
prime of norm smaller than $p^2$ is one of the $q^{}_i$'s, we conclude
that $P^{*}$ is G-admissible, whence
$u = 1^{}_{\nts P^{*}} \in \XX^{}_{\mathrm{G}}$.  By the
choice of $\varPhi$ and the CHL theorem,
we then have
\[ 
   \phi (u)_{y^{(m, n)}} \, = \,
   \varPhi \bigl( u |_{y^{(m, n)} + M} \bigr)
   \, = \, \varPhi (P) \, = \, 1 \ts ,
\]
for every $0\leqslant m,n \leqslant p-1$, where we use that the $p^2$
translates of $P$ in $P^{*}$ are separated by more than $2 \diam (M)$
by construction. Thus, they locally (for a disk-like window that
covers the set $M$) look like a translate of $1^{}_{\nts
  P}$. Consequently, if $\phi (u) = 1^{}_U$, we have the inclusion
$U \supseteq \{ y^{(m, n)} : 0 \leqslant m,n \leqslant p-1\}$.

Since we have  $y^{(m, n)} \equiv (m,n) \bmod (p^2)$,
with $(p^2) = p^2 \ZZ^2$, we
also have $y^{(m,n)} \equiv (m, n) \bmod p\ZZ^2$.  Thus, $U$ contains
a complete set of representatives of $\ZZ^2 / p\ts \ZZ^2$, hence
cannot be $V$-admissible. This contradiction implies that the factor
map $\phi$ cannot exist.
\end{proof}

It is clear that this argument can be adapted to other quadratic
fields as well, which we leave to the interested reader. It seems
quite plausible that most if not all of the shift spaces we have
analysed above are independent of each other in this stronger sense.

At this point, it is also natural to cover more field extensions that
are Galois, and consider general cyclotomic fields in
particular. Here, we expect that the result on the (extended)
symmetries is structurally the same, which suggests that it might hold
more generally. On the other hand, it looks doubtful whether entropy
can be as strong as it seems here.

\section*{Acknowledgements}

It is our pleasure to thank Uwe Grimm, J\"urgen Kl\"uners, Christoph
Richard, Dan Rust and Johannes Sprang for helpful discussions.  We
  thank an anonymous referee for several thoughtful comments that
  helped us to improve the presentation. AB  was supported by
  the EPSRC (grant no.~EP/S010335/1). He is also grateful to the
Research Centre for Mathematical Modelling (RCM${}^2$) at Bielefeld
University for hospitality, and AN acknowledges financial support by
the German Research Foundation (DFG) through its Heisenberg program
(project no.~437113953).


\begin{thebibliography}{99}

\bibitem{Apo}
Apostol T M,
\textit{Introduction to Analytic Number Theory},
corr.\ 4th printing, Springer, New York (1984).

\bibitem{Baa}
Baake M,
A brief guide to reversing and extended
symmetries of dynamical systems, in \textit{Ergodic Theory and
  Dynamical Systems in their Interactions with Arithmetics and
  Combinatorics}, Ferenczi S, {Ku{\l}aga{\ts}}-Przymus~J and
Lema\'nczyk M (eds.), LNM 2213, Springer, Cham (2018), pp.~117--135;
\texttt{arXiv:1803.06263}.

\bibitem{BBHLN}
Baake M, Bustos \'{A}, Huck C, Lema\'{n}czyk M and Nickel A,  
Number-theoretic positive entropy shifts with small centraliser
and large normaliser, \textit{Ergodic Th.\ \& Dynam.\ Syst.}
\textbf{41} (2021) 3201--3226; \texttt{arXiv:1910.13876}.

\bibitem{TAO}
Baake M and Grimm U,
\textit{Aperiodic Order. Vol.\ 1: A Mathematical Invitation},
Cambridge University Press, Cambridge (2013).

\bibitem{BH}
Baake M and Huck C,
Ergodic properties of visible lattice points,
\textit{Proc.{\!}~V.~A.~Steklov Inst.\ Math.} 
\textbf{288} (2015) 184--208; \texttt{arXiv:1501.01198}.

\bibitem{BHS}
Baake M, Huck C and Strungaru N,
On weak model sets of extremal density,
\textit{Indag.\ Math.} \textbf{28} (2017) 3--31;
\texttt{arXiv:1512.07219}.

\bibitem{BLR}
Baake M, Lenz D and Richard C,
Pure point diffraction implies zero entropy for Delone sets with 
uniform cluster frequencies,
\textit{Lett.\ Math.\ Phys.} \textbf{82} (2007) 61--77;
\texttt{arXiv:0706.1677}. 

\bibitem{BMP}
Baake M, Moody R V and Pleasants P A B,
Diffraction of visible lattice points and $k\ts$th power free
integers, \textit{Discr.\ Math.} \textbf{221} (2000) 3--42;
\texttt{arXiv:math.MG/9906132}.

\bibitem{BRY}
Baake M, Roberts J A G and Yassawi R,
Reversing and extended symmetries of shift spaces,
\textit{Discr.\ Cont.\ Dynam.\ Syst.\ A}
\textbf{38} (2018) 835--866; 
\texttt{arXiv:1611.05756}.

\bibitem{BCH}
Banerjee S, Chakraborty K and Hoque A,
An analogue of Wilton's formula and values of Dedekind zeta 
functions, \textit{J.\ Math.\ Anal.\ Appl.} \textbf{495}
(2021) 124675:1--20; \texttt{arXiv:1611.08693}.

\bibitem{Bustos}
Bustos \'{A},
Extended symmetry groups of multidimensional
subshifts with hierarchical structure,
\textit{Discr.\ Cont.\ Dynam.\ Syst.\ A}
\textbf{40} (2020) 5869--5895; \texttt{arXiv:1810.02838}.

\bibitem{CV}
Cellarosi F and Vinogradov I,
Ergodic properties of $k$-free integers in number fields,
\textit{J.\ Mod.\ Dyn.} \textbf{7} (2013) 461--488; 
\texttt{arXiv:1304.0214}.

\bibitem{Cox}
Coxeter H S M,
\textit{Introduction to Geometry}, 2nd ed., 
Wiley, New York (1969).

\bibitem {DKKL} 
Dymek A, Kasjan S, {Ku{\l}aga{\ts}}-Przymus J and Lema\'{n}czyk M,
$\cB$-free sets and dynamics,
\textit{Trans.\ Amer.\ Math.\ Soc.}  \textbf{370}
(2018) 5425--5489; 
\texttt{arXiv:1509.08010}.

\bibitem{Abda}
El Abdalaoui E H, Lema\'{n}czyk M and De La Rue T,
A dynamical point of view on the set of $\cB$-free integers,
\textit{Intern.\ Math.\ Res.\ Notices} \textbf{16}
(2015) 7258--7286;  \texttt{arXiv:1311.3752}.

\bibitem{Fis}
Fischler S, 
\emph{Irrationalit\'{e} de valeurs de z\^{e}ta 
(d'apr\`es {A}p\'{e}ry, {R}ivoal,{$\dots$})}, 
\textit{Ast\'{e}risque} \textbf{294} (2004), vii, 27--62.

\bibitem{Furst}
Furstenberg H,
\textit{Recurrence in Ergodic Theory and Combinatorial Number Theory},
Princeton University Press, Princeton (1981).

\bibitem{HR}
Huck C and Richard C,
On pattern entropy of weak model sets,
\textit{Discr.\ Comput.\ Geom.} \textbf{64} (2015) 741--757; 
\texttt{arXiv:1412.6307}.

\bibitem{KKL}
Kasjan S, Keller G and Lema\'{n}czyk M,
Dynamics of $\cB$-free sets: a view through the window,
\textit{Intern.\ Math.\ Res.\ Notices} \textbf{9} (2019)
2690--2734; \texttt{arXiv:1702.02375}.

\bibitem{Keller}
Keller G,
Maximal equicontinuous generic factors and weak model 
sets, \textit{Discr.\ Cont.\ Dynam.\ Syst. A} \textbf{40}
(2020) 6855--6875; \texttt{arXiv:1610.03998}.

\bibitem{LM}
Lind D and Marcus B,
\textit{An Introduction to Symbolic Dynamics and Coding},
Cambridge University Press, New York (1995).

\bibitem{Mentzen}
Mentzen M K,
Automorphisms of subshifts defined by $\cB$-free sets of integers,
\textit{Coll.\ Math.} \textbf{147} (2017) 87--94.

\bibitem{Meyer}
Meyer Y,
\textit{Algebraic Numbers and Harmonic Analysis},
North Holland, Amsterdam (1972).

\bibitem{N}
Neukirch J,
\textit{Algebraic Number Theory},
Springer, Berlin (1999).

\bibitem{PH}
Pleasants P A B and Huck C,
Entropy and diffraction of the $k$-free points in
$n$-dimensional lattices, \textit{Discr.\ Comput.\ Geom.}
\textbf{50} (2013) 39--68;
\texttt{arXiv:1112.1629}.

\bibitem{Sarnak}
Sarnak P,
M\"{o}bius randomness and dynamics,
talk (2010); available at\newline
\texttt{https://publications.ias.edu/sarnak/paper/518}.

\bibitem{Klaus}
Schmidt K,
\textit{Dynamical Systems of Algebraic Origin},
Birkh\"{a}user, Basel (1995).

\bibitem{Siegel}
Siegel C-L,
\"{U}ber die Fourierschen Koeffizienten von Modulformen, 
\textit{Nachr.\ Akad.\ Wiss.\ G\"{o}ttingen, Math.-Phys.\ Kl.\ II}
\textbf{1970} (1970) 15--56.

\bibitem{VO}
Viana M and Oliveira K,
\textit{Foundations of Ergodic Theory},
Cambridge University Press, Cambridge (2016).
  
\bibitem{Weiss}
Weiss B,
\textit{Single Orbit Dynamics},
AMS, Providence, RI (2000).

\bibitem{Zagier}
Zagier D B,
\textit{Zetafunktionen und quadratische K\"{o}rper},
Springer, Berlin (1981).

\end{thebibliography}
\end{document}